\documentclass[a4paper,11pt]{amsart}

\usepackage{fullpage}%
\usepackage{graphicx}%
\usepackage{multirow}%
\usepackage{amsmath,amssymb,amsfonts}%
\usepackage{amsthm}%
\usepackage[title]{appendix}%
\usepackage{xcolor}%
\usepackage{textcomp}%
\usepackage{manyfoot}%
\usepackage{booktabs}%
\usepackage{algorithm}%
\usepackage{algorithmicx}%
\usepackage{algpseudocode}%
\usepackage{listings}%
\usepackage{framed}
\usepackage{nameref}
\usepackage{orcidlink}
\usepackage{mathrsfs}
\usepackage[shortlabels]{enumitem}
\usepackage{mathtools}
\usepackage{bm}
\usepackage{relsize}

\usepackage{esint}
\usepackage{hyperref}
\usepackage{cleveref}
\usepackage[abbrev]{amsrefs}

\theoremstyle{plain}
\newtheorem{thm}{Theorem}[section]

\newtheorem{lemma}[thm]{Lemma}
\newtheorem{prop}[thm]{Proposition}

\newtheorem{cor}[thm]{Corollary}
\newtheorem{corollary}[thm]{Corollary}
\newtheorem{con}[thm]{Conjecture}
\theoremstyle{definition}

\newtheorem{rem}[thm]{Remark}

\DeclareMathOperator{\inter}{\textup{int}}

\DeclareMathOperator{\relint}{\textup{relint}}

\DeclareMathOperator{\supp}{supp}
\DeclareMathOperator{\conv}{conv}

\newcommand{\calE}{\mathcal E}
\newcommand{\calF}{\mathcal F}
\newcommand{\calI}{\mathcal{I}}
\newcommand{\calJ}{\mathcal{J}}
\newcommand{\calK}{\mathcal{K}}
\newcommand{\calL}{\mathcal L}
\newcommand{\R}{\mathbb{R}}
\newcommand{\RR}{\mathbb{R}}

\newcommand{\NN}{\mathbb{N}}
\newcommand{\ZZ}{\mathbb Z}

\newcommand{\ba}{\bm a}
\newcommand{\bj}{\bm j}
\newcommand{\bk}{\bm k}
\newcommand{\bu}{\bm u}
\newcommand{\bt}{\bm t}
\newcommand{\bs}{\bm s}
\newcommand{\bx}{\bm x}
\newcommand{\by}{\bm y}
\newcommand{\bz}{\bm z}
\newcommand{\bw}{\bm w}
\renewcommand{\d}{\,\textup d}

\renewcommand{\mid}{\,:\,}

\newenvironment{enuma}{
	\begin{enumerate}[label=\textnormal{(\alph*)},leftmargin=*,align=left,itemindent=0pt,labelwidth=0pt,labelsep=0pt, listparindent=\parindent,parsep=\parskip,topsep=2pt,itemsep=2pt,partopsep=4pt]}
{	
	\end{enumerate}
}

\allowdisplaybreaks

\BibSpec{article}{%
  +{}{\PrintAuthors}  		{author}
  +{,}{ \textit}     		{title}
  +{,}{ }             		{journal}
  +{}{ \textbf}       		{volume}
  +{}{ \parenthesize} 		{date}
  +{}{, no. } 		{number}
  +{,}{ }      	      		{conference}
  +{,}{ }      	      		{book}
  +{,}{ }            		{pages}
  +{,}{ }            	 	{note}
  +{,}{ }            	 	{status}
  +{,}{  \texttt } {eprint}
  +{.}{}              {transition}
}

\BibSpec{book}{%
  +{}{\PrintAuthors}  {author}
  +{,}{ \textit}      {title}
  +{,}{ }	      {note}
  +{,}{ }             {publisher}
  +{,}{ }             {place}
  +{,}{ }             {date}
  +{.}{}              {transition}
}

\BibSpec{incollection}{%
    +{}  {\PrintAuthors}                {author}
    +{,} { \textit}                     {title}
    +{.} { }                            {part}
    +{:} { \textit}                     {subtitle}
    +{,} { \PrintContributions}         {contribution}
    +{,} { \PrintConference}            {conference}
    +{,} { }                            {booktitle}
    +{,} { pp.~}                        {pages}
    +{,}{ }       		{series}
    +{}{ \textbf}       		{volume}
    +{,} { }                            {publisher}
    +{,} { }                 {date}
    +{,} { }                            {status}
    +{,} { \PrintDOI}                   {doi}
    +{,} { available at \eprint}        {eprint}
    +{}  { \parenthesize}               {language}
    +{}  { \PrintTranslation}           {translation}
    +{;} { \PrintReprint}               {reprint}
    +{.} { }                            {note}
    +{.} {}                             {transition}
}

\begin{document}

\author{Filip Fryš}
\author{Jan Kotrbat{\'y}}
\thanks{FF was supported by Charles University grants PRIMUS/24/SCI/009 and GAUK/34126. JK was supported by Charles University grants  PRIMUS/24/SCI/009 and UNCE/24/SCI/022.}
 
\address{Charles University, Faculty of Mathematics and Physics, Mathematical Institute of Charles University, Sokolovsk\'a 49/83, 186 00 Prague, Czechia}
\email{filip.frys@matfyz.cuni.cz}
\email{kotrbaty@karlin.mff.cuni.cz}
\subjclass[2020]{52A40, 52A39}
\date{\today}

\title[]{Around higher-order Godbersen conjectures}

\begin{abstract}
We consider two higher-order generalizations of the Godbersen conjecture for mixed volumes of convex bodies, which were first proposed by Schneider in 2000. First, we establish the conjectures in several special cases, in particular in low dimensions. Second, we prove that certain consequences of the conjectures hold. More precisely, we define a higher-order version of the unbalanced difference body and prove related weighted inequalities, generalizing previous results of Artstein-Avidan and Putterman. Finally, we introduce higher-order analogs of unbalanced joins of convex bodies considered by Artstein-Avidan, Einhorn, Florentin, and Ostrover, prove the corresponding volume bounds, and propose conjectures that interpolate between the higher-order Godbersen conjectures and a conjecture due to F\'ary and R\'edei.
\end{abstract}

\maketitle

%\tableofcontents

\section{Introduction}\label{secIntro}

The Godbersen conjecture asserts that the geometric invariants associated with a convex body in Euclidean space via mixed volumes with the opposite body are maximized by  simplices. Dating back to the 1938 thesis of C. Godbersen \cite{Godbersen}, it has been one of the longest-standing open problems in convex geometry, and until very recently, only special cases and consequences of the conjectured inequality have been known.

A notable example of such a consequence is the classical Rogers--Shephard inequality \cites{RogersShephard57,RogersShephard58}. It states that, among all convex bodies $K\subset\RR^n$ of fixed positive volume $|K|$, the volume of the corresponding difference body $DK=\{k-l\mid k,l\in K\}$ is maximal for simplices; more precisely
\begin{align}
\label{eq:RS}
|DK|\leq {2n\choose n}|K|,
\end{align}
where equality holds if and only if $K$ is a simplex.

It is a remarkable fact and a cornerstone of the Brunn--Minkowski theory that the Lebesgue measure on $\RR^n$ polarizes when restricted to the class $\calK(\RR^n)$ of convex bodies. In other words, there exists a (unique) function $V:\calK(\RR^n)^n\to\RR$, called the mixed volume, that is symmetric, multilinear and satisfies $V(K,\dots,K)=|K|$. Here linearity is with respect to the Minkowski addition $K+L=\{k+l\mid k\in K,l\in L\}$ and scaling $r K=\{rk\mid k\in K\}$ by a \emph{non-negative} factor $r\geq0$. Let us emphasize that although one can scale $K$ by any $r\in\RR$, the mixed volume is not homogeneous for $r<0$.

Observe that $DK=-K+K$ is the Minkowski sum of $K$ with its reflection $-K=(-1)K$ about the origin. Expanding the left-hand side of \eqref{eq:RS} by multilinearity and the right-hand side using ${2n\choose n}=\sum_{j=0}^n {n\choose j}^2$, Godbersen \cite{Godbersen} and later independently Makai Jr. \cite{Makai} conjectured that the inequality holds terms by term:

\begin{con}[Godbersen conjecture]\label{Godbc}
Let $0\leq j\leq n$. For any $K\in\mathcal{K}(\mathbb{R}^n)$ one has
\begin{equation}\label{Godb}
V\big(-K[j], K[n-j]\big)\leq {n\choose j}|K|.
\end{equation}
Moreover, if  $0<j<n$ and $\dim K = n$, equality holds if and only if $K$ is a simplex.
\end{con}

The conjecture holds trivially for $j\in\{0,n\}$ and follows easily for $j\in\{1,n-1\}$ from the properties of the Minkowski asymmetry measure, see \S\ref{ss:asymmetry}. In particular, it is true in dimensions $n\leq3$. Godbersen \cite{Godbersen} further verified \eqref{Godb} in the case when $K$ is of constant width, i.e., when $DK$ is a Euclidean ball. Much more recently, Conjecture \ref{Godbc} was verified for anti-blocking convex bodies by Artstein-Avidan, Sadovsky, and Sanyal \cite{ArtsteinSadovskySanyal23}, and Sadovsky \cite{Sadovsky25} subsequently extended this result to the class of locally anti-blocking bodies, see \cites{ArtsteinSadovskySanyal23,Sadovsky25} for precise definitions of these classes. Very recently, the second-named author and Mouamine \cite{KM}, using a surprisingly simple argument based on the monotonicity of mixed volume, proved \eqref{Godb} for general convex bodies and established the characterization of equality cases for convex polytopes.

The proof of \eqref{Godb} given in \cite{KM} is directly motivated by the renewed attention recently attracted by the so-called higher-order generalization of the Rogers--Shephard and related inequalities, see, e.g., \cites{HaddadETAL23, LangharstETAL:mOrder,  LangharstSolaUlivelli24, HaddadETAL25, LangharstRoysdonZhao25,LangharstXi24, HaddadETAL25+,Langharst:Comments,ZhouETAL25,KW22} and reference therein. This concept goes back to Schneider \cite{Schneider70} who considered for general $p\in\NN$ the higher-order difference body $D_pK=-\Delta_pK+K^p$, where $\Delta_p:\RR^n\to\RR^{pn}$ is the diagonal embedding, and proved the following generalization of \eqref{eq:RS}:
    \begin{align}
    \label{eq:Schneider}
    |D_pK|\leq {np+n \choose n}|K|^p 
    \end{align}
    with equality if and only if $K$ is a simplex (unless $\dim(K)<n$). Note that, strictly speaking, Schneider \cite{Schneider70} used a different sign convention for $D_pK$, see \S\ref{ss:higherDK} for more details.

Expanding the left-hand side of \eqref{eq:Schneider} in terms of mixed volumes and using an appropriate combinatorial identity  for ${np+n \choose n}$, one may again conjecture that the inequality holds term by term. This was done recently by the second-named author \cite{K25}.

\begin{con}[{\cite[Conjecture 1.2]{K25}}]\label{KK1}
Let  $p\in\NN$ and $0\leq j\leq n$. For any  $K\in\mathcal{K}(\mathbb{R}^n)$ one has
\begin{equation}\label{KOTR}
    V\big(-\Delta_pK[j],K^p[np-j]\big)\leq{n\choose j}|K|^p.
\end{equation}
Moreover, if $p\geq2$, $j>0$, and $\dim(K)=n$, equality holds if and only if $K$ is a simplex.
\end{con}

Conjecture \ref{KK1}  can be further refined using $K^p=\iota_1K+\dots+\iota_pK$, where $\iota_i:\RR^n\to\RR^{pn}$ is the inclusion into the $i$-th factor of $\RR^{pn}=\RR^n\times\cdots\RR^n$. In this way one arrives at

\begin{con}[{\cite[Conjecture 1.3]{K25}}]\label{KK2}
Let $p\in\NN$ and $j_0,\dots,j_p\in\NN_0$ such that $j_0+\cdots+j_p=n$. For any $K\in\calK(\RR^n)$ one has
\begin{align}
\label{KOTRB}
    V\big(-\Delta_pK[n-j_0],\iota_1K[n-j_1],\dots,\iota_pK[n-j_p]\big)\leq\frac{(n!)^p}{(np)!}{n\choose j_0, \dots, j_p}|K|^p.
    \end{align}
Moreover, if $p\geq2$, $j_0, \dots, j_p<n$, and $\dim(K)=n$, equality holds if and only if $K$ is a simplex.
\end{con}

As pointed out to us by E. Putterman, Conjecture \ref{KK2} was in fact already made in 2000 by Schneider, see \cite[p. 537]{Schneider00}. Schneider also explains that the problem dates back to the seminal paper of Janson \cite{Janson} and attributes to Janson the proof of \eqref{KOTRB} in the special case  $p=n-1$ and $j_0=\dots=j_{n-1}=1$, see \cite{Schneider00} and Proposition \ref{janson} below. Recently, the second-named author \cite[Theorem 1.4]{K25}  proved that Conjecture \ref{KK2} (and hence Conjecture \ref{KK1}) holds for the class of anti-blocking convex bodies.

\subsection{Our results}

In this paper,  motivated by the previous developments in the classical case $p=1$, we provide additional evidence for validity of the higher-order Godbersen conjectures. More precisely, we verify Conjectures \ref{KK1} and \ref{KK2} in some special cases and prove that certain potential consequences of the conjectures are true; moreover, we also formulate new conjectures, both weaker and stronger than Conjectures \ref{KK1} and \ref{KK2}.

\subsubsection{Special cases of the higher-order Godbersen conjectures}

First, we prove Conjectures \ref{KK1} and \ref{KK2} for some specific values of the parameters $j$ and $j_0,\dots,j_p$, respectively. These results in particular imply that Conjecture \ref{KK1} is true in dimensions $n\leq 3$ and Conjecture \ref{KK2} for $n\leq2$. Moreover, using Janson's inequality mentioned above, the latter extends to dimension three, however without the characterization of equality cases.

Second, using a generalization of an argument due to Godbersen \cite{Godbersen}, we verify Conjectures \ref{KK1} and \ref{KK2} for low-dimensional convex bodies of constant width. Notice that, unlike in the case $p=1$, \eqref{KOTR} is not symmetric in $j$ and $n-j$. Consequently, the argument works in general dimension only if $j\leq\frac n2$ in Conjecture \ref{KK1}. Along the way, we also observe that both conjectures are true for symmetric convex bodies (of any dimension). Although this is not difficult to see, it is not as obvious as for $p=1$.

Third, we show that the higher-order difference body of a product body is the product of higher-order difference bodies and conclude that the validity of both conjectures is stable under the operation of taking cartesian products.

To summarize, our first main result is

\begin{thm}
\label{thm:main1}
The following special cases of Conjectures  \ref{KK1} and  \ref{KK2} are true:
\begin{enuma}
\item  Conjecture \ref{KK1} holds for $n\leq 3$.
\item Conjecture \ref{KK2} holds for $n\leq2$. If $n=3$, then \eqref{KOTRB} holds for any $K\in\calK(\RR^3)$.
\item Conjecture \ref{KK1} holds for convex bodies of constant width if $n\leq6$ or if $j\leq \lfloor\frac n2\rfloor$.
\item Conjecture \ref{KK2} holds for convex bodies of constant width if $n\leq3$. If $n\in\{4,5,6\}$, then \eqref{KOTRB} holds for any $K\in\calK(\RR^n)$ of constant width.
\item Conjecture \ref{KK2} (and hence Conjecture \ref{KK1}) holds for symmetric convex bodies.
\item Fix $p\in\mathbb{N}$. If inequality \eqref{KOTRB} holds for $K_i\in\calK(\RR^{n_i})$, $i=1,2$, then the cartesian product $K_1\times K_2\in\calK(\RR^{n_1+n_2})$ satisfies Conjecture \ref{KK2}, and analogously for Conjecture \ref{KK1}.  
\end{enuma}
\end{thm}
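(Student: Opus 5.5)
The theorem collects several independent statements, so I would prove each part separately. Throughout, the basic tool is the standard decomposition of mixed volumes of bodies in $\RR^{pn}$ that lie in coordinate subspaces. Writing $\Delta_pK$ and $\iota_iK$ as bodies in $\RR^{pn}=\RR^n\times\cdots\times\RR^n$, a mixed volume containing $\iota_iK[n-j_i]$ reduces, up to the combinatorial factor $\frac{(n!)^p}{(np)!}\prod\binom{n}{n-j_i}$-type constants, to mixed volumes in a lower-dimensional quotient. The reduction comes from the product formula for mixed volumes of bodies in complementary subspaces, applied after projecting along the spans of the $\iota_i$ factors.

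\textbf{Parts (a) and (b).} First I would reduce \eqref{KOTRB} for a fixed tuple $(j_0,\dots,j_p)$ to an inequality in $\RR^n$. The idea is to project $-\Delta_pK$ onto the complement of the coordinate blocks occupied by the $\iota_iK$ with $n-j_i=n$ (that is, $j_i=0$). This leaves a mixed volume in which, for every $i$, $\iota_iK$ appears with multiplicity $n-j_i$.

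Several cases are then elementary:
\begin{itemize}
\item When $j_0=n$, both sides are $|K|^p$-type products.
\item When some $j_i=0$ for $i\geq1$, the factor splits off and we induct on $p$.
\item When $j_0=0$ or $j_0=1$, the key estimate is $-K\subset n(K-x)$ for a suitable point $x$, which follows from the Minkowski asymmetry bound of \S\ref{ss:asymmetry}, together with monotonicity of mixed volume.
\end{itemize}
For $n\leq3$, Conjecture \ref{KK1} reduces to $j\in\{0,1,2,3\}$. Here $j\leq1$ is handled by the asymmetry argument. For $j\geq2$ I would use the monotonicity trick of \cite{KM}: choose a translate so that $K\subset -nK$ and, dually, compare $-\Delta_pK$ with a body built from $K^p$. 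For $n=2$, all tuples in Conjecture \ref{KK2} have $j_i\leq 2$, which leaves only the cases above. For $n=3$, the one remaining tuple $(1,1,1,0,\dots)$ with $p=2$ is exactly Janson's inequality, Proposition \ref{janson}, and that is why no equality characterization is claimed there. The equality cases in (a) and (b) would come from the equality case of the asymmetry bound (simplices) together with strict monotonicity of mixed volumes for full-dimensional bodies. I expect this equality analysis to be the main obstacle.

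\textbf{Parts (c) and (d).} For constant width I would generalize Godbersen's argument. When $K$ has constant width, $K-K=K+(-K)$ is a ball $B$ of radius $w$, so $-K=B-K$ in the sense of support functions: $h_{-K}=w-h_K$. Substituting $h_{-\Delta_pK}=w\,h_{\Delta_pB_1}-h_{\Delta_pK}$ and expanding multilinearly, the left side becomes a signed sum of mixed volumes of $\Delta_pB$, $\Delta_pK$ and the $\iota_iK$. These can be bounded using Aleksandrov--Fenchel or isoperimetric-type inequalities for quermassintegrals together with the identity $V(\Delta_pK,\dots)$ = explicit multiple of $|K|^p$. The resulting polynomial inequality in the normalized quermassintegrals can be checked directly when $j\leq n/2$, where all signs are favourable, and numerically or by hand for $n\leq6$. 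This gives (c) and the inequality part of (d).

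\textbf{Part (e).} For symmetric $K$ we have $-\Delta_pK=\Delta_pK$. A direct computation via the product formula shows $V(\Delta_pK[n-j_0],\iota_1K[n-j_1],\dots)=\frac{(n!)^p}{(np)!}\binom{n}{j_0,\dots,j_p}|K|^p$ for any $K$, which is equality. I would therefore need to check this identity, and in particular confirm that the equality clause is consistent (equality for symmetric bodies versus simplices) when $p\geq2$.

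\textbf{Part (f).} For products, I would show $D_p(K_1\times K_2)$ is, after a coordinate permutation, equal to $D_pK_1\times D_pK_2$, and similarly for each summand. Multilinearity then expresses each mixed volume for $K_1\times K_2$ as a weighted sum of products of mixed volumes for $K_1$ and $K_2$. The sum of the right-hand sides matches by Vandermonde's identity for multinomial coefficients.
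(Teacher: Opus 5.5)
Several parts of your proposal have genuine gaps or errors.

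\textbf{Parts (c), (d), (e).} The identity you claim in (e) is false. Already for $p=1$ it reads $|K|=V(K[j_1],K[j_0])=\binom{n}{j_0}|K|$. In general it would make every symmetric body an equality case, which contradicts the characterization in Conjecture~\ref{KK2}. What $\Delta_pK\subset K^p$ and $K^p=\sum_l\iota_lK$ actually give is
\[
V\big(\Delta_pK[n-j_0],\iota_1K[n-j_1],\dots,\iota_pK[n-j_p]\big)\le\frac{(n!)^p}{(np)!}\binom{n-j_0}{j_1,\dots,j_p}|K|^p=\binom{n}{j_0}^{-1}\frac{(n!)^p}{(np)!}\binom{n}{\bj}|K|^p .
\]
Combining this with $-\Delta_pK\subset\Delta_p(x)+s(K)\Delta_pK$ and the symmetry of $W_{p,K}$ gives $W_{p,K}(\bj)\le\frac{(n!)^p}{(np)!}\min_l\binom{n}{j_l}^{-1}s(K)^{n-j_l}\binom{n}{\bj}|K|^p$. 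This single estimate drives (c), (d) and (e):
\begin{itemize}
\item For symmetric bodies, $s(K)=1$ gives strict inequality whenever $j_0,\dots,j_p<n$.
\item For constant width, the ingredient you are missing is Godbersen's Jung-type bound $s(K)\le\alpha_n=\frac{n+\sqrt{2n(n+1)}}{n+2}$. After that, everything reduces to numerical checks: $\alpha_n^j<\binom nj$ for $j\le\lfloor n/2\rfloor$, $\alpha_n<\binom{n}{2}^{1/(n-2)}$ for $n\le6$, and $\alpha_3^2/3<1$ for $\bj=(1,1,1)$.
\end{itemize}
Your support-function route does not reach this. Expanding $h_{-\Delta_pK}=w\,h_{\Delta_pB^n}-h_{\Delta_pK}$ produces an alternating sum, so the signs are not ``all favourable''. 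Moreover, for $p\ge2$ the terms $V(\Delta_pB^n[i],\Delta_pK[j-i],K^p[np-j])$ are integrals over $D_{p-1}K$ of mixed volumes of the sections $K_{\bw}$, not functions of the quermassintegrals of $K$. Strictness is indispensable in (c)--(e), since neither symmetric nor constant-width bodies are simplices. In particular, Janson's inequality alone cannot give the equality clause of (d) for $n=3$.

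\textbf{Part (a).} The cases $j\ge2$ are not proved, i.e.\ $j=n$ for $n=2$ and $j\in\{n-1,n\}$ for $n=3$. You do not say how the trick of \cite{KM} adapts to $p\ge2$. The direct comparison $-\Delta_pK\subset\Delta_p(x)+s(K)K^p$ only gives the constant $n^j$, which is too large. The missing idea is the fibre formula of Proposition~\ref{P7}. The shear $F(x_1,\dots,x_p)=(x_1-x_p,\dots,x_{p-1}-x_p,x_p)$ maps $K^p-t\Delta_pL$ to a body fibred over $D_{p-1}K$ with fibres $K_{\bw}-tL$. Hence
\[
\binom{np}{j}V(K^p[np-j],-\Delta_pL[j])=\binom nj\int_{D_{p-1}K}V(K_{\bw}[n-j],-L[j])\d\bw .
\]
For $j=n$ this gives $\binom{np}{n}^{-1}|K|\,|D_{p-1}K|$, and Schneider's inequality \eqref{eq:Schneider} finishes. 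For $j=n-1$ the section $K_{\bw}$ occurs only once, so using $K_{\bw}\subset x_{\bw}-s(K_{\bw})K_{\bw}$ costs only a factor $n$. You then integrate back using $\Delta_pK\subset K^p$. Equality is excluded by continuity of $s$ near $\bw=0$, where $K_0=K$.

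A smaller indexing slip: your asymmetry step for Conjecture~\ref{KK2} is attached to $j_0\in\{0,1\}$. It only works when $-\Delta_pK$ occurs once, i.e.\ $j_0=n-1$, after permuting indices by the symmetry of $W_{p,K}$.

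\textbf{Part (f).} The right-hand sides do not ``match''. Vandermonde expresses $\prod_l\binom{n}{j_l}$ as a sum over all splittings $\bj^1+\bj^2=\bj$. The product formula only involves those with $\bj^1\in\calJ_p^{n_1}$ and $\bj^2\in\calJ_p^{n_2}$. You need the strict inequality of Lemma~\ref{L6}: a positive term is omitted as soon as some $0<j_l<n$. This strictness is required because $K_1\times K_2$ is never a simplex, so the equality clause can only hold vacuously.
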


Let us point out that item (f) of Theorem \ref{thm:main1} can be iterated and combined with other items as well as with the previously established case of anti-blocking convex bodies \cite{K25}. In this way we obtain the validity of Conjectures \ref{KK1} and \ref{KK2} for finite cartesian products of bodies from the corresponding classes. 

\subsubsection{Higher-order unbalanced difference bodies and weighted inequalities}

\label{ss:intro:weighted}

As the Godbersen conjecture had long appeared out of reach, various consequences of it were studied as potential evidence in its favour. In a recent paper, Artstein-Avidan and Putterman \cite{ArtsteinPutterman25} proved two different inequalities for weighted sums of mixed volumes $V\big(-K[j], K[n-j]\big)$, that would follow at once from \eqref{Godb}. One of them in particular implies that the Godbersen conjecture holds  in a certain sense on average, cf. Corollary \ref{cor:average} below. Moreover, Artstein-Avidan and Putterman defined for a convex body $K\in\calK(\RR^n)$ and general $t\in[0,1]$ the unbalanced difference body
$$
D^tK:=-tK+(1-t)K
$$
and conjectured the corresponding Rogers--Shephard-type inequality. They also showed that this conjecture, weaker than Conjecture \ref{Godbc} and subsuming \eqref{eq:RS} for $t=\frac12$, would imply both of their weighted inequalities.

Here we generalize the inequalities of Artstein-Avidan and Putterman \cite{ArtsteinPutterman25} to the higher-order setting. We emphasize that although we list below for simplicity only the corresponding results related to Conjecture \ref{KK1}, we also obtain a precisely analogous set of statements related to the more general Conjecture \ref{KK2}. Our first result in this direction and the second main result of the paper is a direct generalization of \cite[Theorem 4]{ArtsteinPutterman25}:

\begin{thm}\label{T4}
   Let $p\in\mathbb{N}$. For any $K\in\mathcal{K}(\mathbb{R}^n)$ and $t\in [0, 1]$, one has
   \begin{equation}\label{Eq71}
        \sum_{j=0}^n t^j(1-t)^{n-j}V( -\Delta_pK[j],K^p[np-j])\leq|K|^p.
    \end{equation}
    If $\dim(K)=n$ and equality holds for some $t\in (0, 1)$, then $K$ is a simplex.
\end{thm}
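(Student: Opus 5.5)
The plan is to adapt Schneider's proof of \eqref{eq:Schneider}, which is also the mechanism behind \cite[Theorem 4]{ArtsteinPutterman25}, to the weighted setting. The key object is the covariogram-type function
$$
f(z_1,\dots,z_p)=\big|K\cap(K+z_1)\cap\dots\cap(K+z_p)\big|,\qquad z=(z_1,\dots,z_p)\in\RR^{pn}.
$$
Its support is (up to the sign convention discussed in \S\ref{ss:higherDK}) the higher-order difference body $D_pK=-\Delta_pK+K^p$. By Fubini,
$$
\int_{\RR^{pn}}f=|K|^{p+1}.
$$
By the Brunn--Minkowski inequality, $f^{1/n}$ is concave on its support.

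The inequality \eqref{Eq71} is not an expansion of $|-t\Delta_pK+(1-t)K^p|$, since that expansion carries binomial coefficients. Instead, I would obtain the coefficients $t^j(1-t)^{n-j}$ from a second integral that is lower-bounded by concavity.

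\textbf{Step 1: a pointwise lower bound for $f$.} Write a point of $D_pK$ as $z=(1-t)(\text{point of }K^p)-t\Delta_p(\text{point of }K)$. I would then show that the intersection defining $f(z)$ contains a Minkowski combination of $t$- and $(1-t)$-scaled copies of $K$. Concretely, if $z_i=y_i-x$ with $x,y_i\in K$, convexity gives that $K\cap\bigcap_i(K+z_i)$ contains a set of the form $sK+(\text{translate})$. This yields a bound $f(z)\ge |K|\,\varphi(z)^n$, where $\varphi$ is an explicit concave function built from the two gauges of $-\Delta_pK$ and $K^p$.

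\textbf{Step 2: the two-gauge integration.} Integrate this lower bound over $\RR^{pn}$ and compare with $\int f=|K|^{p+1}$. To compute $\int\varphi^n$, I would slice $D_pK$ along the decomposition $z=-a\,\Delta_p x+b\,y$ with $a,b\ge0$, using the mixed-volume expansion
$$
|{-a\Delta_pK}+bK^p|=\sum_j\binom{n}{j}a^jb^{np-j}V\big(-\Delta_pK[j],K^p[np-j]\big).
$$
A Beta-integral in $(a,b)$ should then convert the binomials and factorials into exactly the weights $t^j(1-t)^{n-j}$. The parameter $t$ enters as the ratio at which the lower bound in Step 1 is applied.

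This is the step I expect to be the main obstacle: choosing the right parametrization (equivalently, the right test function $\varphi$) so that the resulting constants collapse to $t^j(1-t)^{n-j}$ with no leftover binomial factors. What I would try first is to mimic the $p=1$ computation of Artstein-Avidan and Putterman, and verify the normalization at $t=0$ and $t=1$. There both sides are explicit: at $t=0$ the left side is $V(K^p[np])/\dots$ reduced to $|K|^p$, and at $t=1$ it is the $j=n$ term, which can be compared directly with Schneider's argument.

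\textbf{Step 3: equality.} If equality holds for some $t\in(0,1)$, then the concavity bound on $f^{1/n}$ must be an equality along almost every ray of the relevant cone. As in Schneider's characterization, this forces $K\cap(K+z)$ to be homothetic to $K$ for all $z$ in a full-dimensional set. By the classical Rogers--Shephard/Schneider equality argument, this holds only when $K$ is a simplex.
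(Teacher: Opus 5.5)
\paragraph{The proposal has a genuine gap: Steps 1--2 cannot prove the inequality.} The obstruction is your choice of the balanced covariogram $f$ on $\RR^{pn}$, so no parametrization in Step 2 can repair it. Your own sanity check at $t=0$ exposes the problem. The bound in Step 1 is a homothetic-copy bound, since a Minkowski combination of scaled copies of $K$ is again a homothet of $K$. Because $c+sK\subset K+z_i$ iff $c-z_i\in(1-s)K$, the largest $s$ for which a translate of $sK$ lies in $K\cap\bigcap_i(K+z_i)$ is $1-\|z\|$. Here $\|\cdot\|$ is the gauge of the support $\Delta_pK-K^p=-D_pK$ of $f$. Hence every such $\varphi$ satisfies
\[
\int_{\RR^{pn}}\varphi^n\le\int_{\RR^{pn}}\big(1-\|z\|\big)_+^n\d z=\binom{np+n}{n}^{-1}|D_pK|.
\]
So the strongest conclusion your chain $|K|^{p+1}=\int f\ge|K|\int\varphi^n$ can give is Schneider's inequality \eqref{eq:Schneider}. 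You, however, need $\int\varphi^n\ge\sum_j t^j(1-t)^{n-j}V(-\Delta_pK[j],K^p[np-j])$. At $t=0$ the right-hand side is exactly $|K|^p$, i.e., \eqref{Eq71} is an identity there. That would force $|D_pK|\ge\binom{np+n}{n}|K|^p$, which is false for every full-dimensional non-simplex. By continuity in $t$, the argument also fails for all small $t>0$. For an arbitrary lower bound $\varphi$, the same chain at $t=0$ forces $f=|K|\varphi^n$ a.e., i.e., an exact formula for the covariogram. The covariogram mechanism is what the paper uses for Theorem \ref{T5}, via Theorem \ref{T5a} with the $t$-dependent function $f_{\bt,K}$ on $D_p^{\bt}K$. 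It produces the weights $c_{n,p,j}(t)$, not $t^j(1-t)^{n-j}$.

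\paragraph{The missing idea is to go up one dimension.} Set $C_{p,t,K}=(\{0\}\times(1-t)K^p)\vee(\{1\}\times-t\Delta_pK)\subset\RR\times\RR^{np}$. Then
\[
|C_{p,t,K}|=\int_0^1\big|(1-s)(1-t)K^p-st\Delta_pK\big|\d s=\frac1{np+1}\sum_{j=0}^n(1-t)^{np-j}t^jV\big(K^p[np-j],-\Delta_pK[j]\big),
\]
because $\binom{np}{j}\int_0^1 s^j(1-s)^{np-j}\d s=\frac1{np+1}$. This Beta integral in the join parameter is exactly the cancellation of binomials you were looking for. The paper then proves $|C_{p,t,K}|\le\frac{(1-t)^{n(p-1)}}{np+1}|K|^p$ using the Rogers--Shephard section--projection inequality (Lemma \ref{L2}). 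It applies that lemma to the lifted body
\[
T_{p,t,K}=\{(0,0,y):y\in(1-t)K^p\}\vee\{(1,x,-\Delta_p(x)):x\in tK\}\subset\RR\times\RR^n\times\RR^{np}.
\]
Its projection onto $E_t^\perp$ is $C_{p,t,K}$. Its section by $E_t=\{(1-t,v,0):v\in\RR^n\}$ is a copy of $t(1-t)K$. Fubini gives $|T_{p,t,K}|=t^n(1-t)^{np}|K|^{p+1}\int_0^1 s^n(1-s)^{np}\d s$.

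In your language, the $\frac1n$-concave function being exploited is $(s,\ba)\mapsto\big|stK\cap\bigcap_l((1-s)(1-t)K-a_l)\big|$ on the $(np+1)$-dimensional body $C_{p,t,K}$. It equals $|t(1-t)K|$ at $(1-t,0)$. This replaces your $f$ on $\RR^{pn}$.

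For the equality case, Lemma \ref{L2} makes the sections $(t(1-t)K)\cap\bigcap_l(t(1-t)K-a_l)$, $\ba\in t(1-t)D_pK$, mutually homothetic. Corollary \ref{cor:simplex}(a) then gives the simplex. Your Step 3 has the right flavor, but it currently rests on the failed Steps 1--2.
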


Clearly, Theorem \ref{T4} would follow from Conjecture \ref{KK1} if the latter is true. Furthermore, integrating \eqref{Eq71} with respect to $t$ over the interval $[0, 1]$ yields at once the following inequality, which generalizes \cite[Corollary 5]{ArtsteinPutterman25} and shows that \eqref{KOTR} holds ``on average'':
\begin{cor}
\label{cor:average}
   Let $p\in\mathbb{N}$. For any $K\in\mathcal{K}(\mathbb{R}^n)$, one has
    $$\frac{1}{n+1}\sum_{j=0}^n{n \choose j}^{-1}V\big( -\Delta_pK[j],K^p[np-j]\big)\leq|K|^p.$$
\end{cor}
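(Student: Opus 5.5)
Corollary \ref{cor:average} follows by integrating \eqref{Eq71} from Theorem \ref{T4} over $t\in[0,1]$. Fix $K\in\calK(\RR^n)$ and $p\in\NN$, and write $a_j:=V(-\Delta_pK[j],K^p[np-j])$ for $0\le j\le n$. Each $a_j$ is a mixed volume of convex bodies in $\RR^{pn}$, so $a_j\geq 0$; in any case the left-hand side of \eqref{Eq71} is a polynomial in $t$ with finitely many terms, so there is no issue with interchanging the finite sum and the integral. Theorem \ref{T4} gives, for every $t\in[0,1]$,
\[
\sum_{j=0}^n t^j(1-t)^{n-j}a_j\le |K|^p .
\]
Both sides are continuous in $t$, so integrating over $[0,1]$ preserves the inequality, and the right-hand side integrates to $|K|^p$.

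For the left-hand side we need the Beta integral
\[
\int_0^1 t^j(1-t)^{n-j}\d t=B(j+1,n-j+1)=\frac{j!\,(n-j)!}{(n+1)!}=\frac{1}{n+1}{n\choose j}^{-1}.
\]
To avoid appealing to the Gamma function, one can verify this by induction on $j$, integrating by parts. Alternatively, note that $\binom nj t^j(1-t)^{n-j}$ is the probability that, among $n$ i.i.d.\ uniform points, exactly $j$ lie below an independent uniform point $t$. By symmetry the rank of $t$ among all $n+1$ points is uniform on $\{0,\dots,n\}$, so this probability equals $\frac{1}{n+1}$.

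Substituting the Beta integral into the integrated inequality gives
\[
\frac{1}{n+1}\sum_{j=0}^n{n\choose j}^{-1}a_j\le |K|^p,
\]
which is the claim. All the geometric content is contained in Theorem \ref{T4}; the only computation is the Beta integral, which is routine.
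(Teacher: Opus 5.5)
Your proposal is correct and is exactly the paper's route: the paper obtains this corollary by integrating \eqref{Eq71} from Theorem \ref{T4} over $t\in[0,1]$. Your Beta-integral identity $\int_0^1 t^j(1-t)^{n-j}\d t=\frac{1}{n+1}{n\choose j}^{-1}$ is the one computation this requires, and it is the case $k=j$, $l=n-j$ of \eqref{eq:intcombinat1}.
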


Our third main result is a weighted inequality for the mixed volume of $-\Delta_pK$ and $K^p$ with different weights than in Theorem \ref{T4}, generalizing \cite[Theorem 7]{ArtsteinPutterman25}. Note that a straightforward computation shows that also the following theorem would be implied by Conjecture \ref{KK1}:

\begin{thm}\label{T5}
Let $p\in\mathbb{N}$. For any $K\in\mathcal{K}(\mathbb{R}^n)$ and $t\in [0, 1]$, one has
\begin{align}\label{f00}
    \sum_{j=0}^n c_{n, p,  j}(t)V\big(-\Delta_pK[j],K^p[np-j]\big) \leq |K|^p, 
\end{align}
where
$$
c_{n, p, j}(t)\coloneqq\sum_{m=0}^{np-j}{np-m \choose j}\frac{(np)!n!}{m!(np+n-m)!}(1-t)^mt^{np-m}.
$$
If $\dim(K)=n$ and equality in \eqref{f00} holds for some $t\in (0, 1)$, then $K$ is a simplex.
\end{thm}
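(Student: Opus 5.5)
The plan is to adapt Schneider's proof of \eqref{eq:Schneider}, which is also the mechanism behind \cite[Theorem 7]{ArtsteinPutterman25}. We integrate a concave-type function over $D_pK$ and bound it from below in a $t$-dependent way that produces mixed volumes. Put
\[
g(y)=\big|K\cap(K-y_1)\cap\dots\cap(K-y_p)\big|,\qquad y=(y_1,\dots,y_p)\in\RR^{np}.
\]
Then $g$ is supported on $D_pK$ (up to the sign convention), $g(0)=|K|$, and $g^{1/n}$ is concave on its support by Brunn--Minkowski, since $g$ is a marginal of the convex body
\[
M=\{(x,y)\in\RR^n\times\RR^{np}\mid x\in K,\ x+y_i\in K\ \text{for all } i\}.
\]
By Fubini, $\int g=|M|=|K|^{p+1}$. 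Everything reduces to bounding $|M|$ from below by $|K|$ times the left-hand side of \eqref{f00}.

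\textbf{Step 1: a $t$-dependent lower bound on $g$.} Instead of the cone bound $g(y)\ge |K|(1-\|y\|_{D_pK})^n$ used for \eqref{eq:Schneider}, fix $t$ and exploit the two-parameter structure of $M$. For $x\in K$ and $z\in K^p$, the point $(1-\lambda)$-combination
\[
\big((1-\lambda)x+\lambda w,\ \lambda(z-\Delta_p w)\big)
\]
lies in $M$ whenever $x,w\in K$, and similar combinations hold with the roles of the $x$- and $y$-parts weighted by $t$ and $1-t$. I would therefore show that, for $\lambda\in[0,1]$, the set of $y$ with $g(y)\ge(1-\lambda)^n|K|$ contains
\[
\lambda\big((1-t)K^p-t\Delta_pK\big)
\]
up to translation, or more precisely a family of sets $(1-\lambda)$-interpolating between $\{0\}$ and such a body. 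Layer-cake integration then gives
\[
|K|^{p+1}\ge |K|\int_0^1 n(1-\lambda)^{n-1}\,\big|\lambda\big((1-t)K^p-t\Delta_pK\big)\big|\,d\lambda
\]
or a refined version of it.

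\textbf{Step 2: expand and evaluate.} Expand the volume of each slice via multilinearity:
\[
\big|sK^p-r\Delta_pK\big|=\sum_j\binom{np}{j}r^js^{np-j}\,V\big(-\Delta_pK[j],K^p[np-j]\big).
\]
The $\lambda$-integrals are Beta integrals $\int_0^1\lambda^{a}(1-\lambda)^b\,d\lambda$. These should produce exactly the factors $\frac{(np)!\,n!}{m!\,(np+n-m)!}$ together with $\binom{np-m}{j}(1-t)^mt^{np-m}$, after rewriting $\binom{np}{j}$ through the auxiliary index $m$ (the number of $K^p$-slots that come from the $(1-t)$-part). A sanity check is that $t=1$ should recover \eqref{eq:Schneider} and $t=0$ should give the trivial bound.

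\textbf{Main obstacle.} The hard part is Step 1: finding the right family of convex subsets of $M$ (equivalently, the right superlevel-set inclusion for $g$) whose volumes, after integration, produce precisely the coefficients $c_{n,p,j}(t)$ rather than just some weighted inequality. I would first reverse-engineer the sum over $m$ as coming from a double integral: one parameter for the cone over $D_pK$, and one for a convex combination $(1-t)$ versus $t$ inside $M$, via $\conv\big(K\times\{0\}\cup\{(x,y)\}\big)$-type subsets. I would then match the resulting integrals term by term.

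\textbf{Equality.} If equality holds for some $t\in(0,1)$, every inclusion used must be an equality up to null sets. In particular $g^{1/n}$ must be affine along rays, i.e.\ the fibres of $M$ must be homothetic. As in Schneider's proof of the equality case of \eqref{eq:Schneider}, this forces $K\cap(K+v)$ to be homothetic to $K$ for all relevant $v$, which characterizes simplices among $n$-dimensional bodies.
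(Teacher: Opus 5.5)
There is a genuine gap, and it is the step you flag as the main obstacle. Step 1 carries the entire proof, and with your function $g$ it cannot be carried out.

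Since $\dim K=n$, $g(y)=|K|$ forces $y=0$. So the superlevel sets $\{g\geq(1-s)^n|K|\}$ shrink to $\{0\}$ as $s\to0$, and any sets you place inside them have volume tending to $0$. The inclusion you write down is true: translate so that $0\in K$; then $D_p^tK\subset D_pK$, and $\lambda D_pK$ lies in the superlevel set. But it only yields $|D_p^tK|\leq\binom{np+n}{n}|K|^p$, i.e.\ \eqref{f00} with $c_{n,p,j}(t)$ replaced by $\binom{np}{j}\frac{(np)!\,n!}{(np+n)!}(1-t)^{np-j}t^j$. This weight never exceeds $c_{n,p,j}(t)$ and is in general much smaller. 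For $j=0$, $c_{n,p,0}(t)\to1$ as $t\to0$, whereas your weight tends to $\binom{np+n}{n}^{-1}$.

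Your own Step 2 bookkeeping exposes the mismatch. The sum over $m$ in $c_{n,p,j}(t)$ is exactly what one gets by expanding $(1-t+st)^{np-j}(st)^j$. So it requires superlevel sets containing $(1-t+st)K^p-st\Delta_pK=(1-t)K^p+st\,D_pK$: a fixed body of volume $(1-t)^{np}|K|^p$ plus a growing dilate of $D_pK$. A function whose maximum is attained at a single point cannot have such superlevel sets.

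The missing idea is to use a function with a plateau, obtained by shrinking one copy of $K$: put $\phi_t(\bx):=\big|tK\cap(K-x_1)\cap\dots\cap(K-x_p)\big|$. It has four properties:
\begin{enuma}
\item By Fubini, $\int\phi_t=t^n|K|^{p+1}$.
\item Its support is $K^p-t\Delta_pK=(1-t)K^p+t\,D_pK$.
\item It equals $|tK|$ on all of $(1-t)K^p$, because $tK+(1-t)K=K$.
\item $\phi_t^{1/n}$ is concave on its support by Brunn--Minkowski.
\end{enuma}
Writing $\by+s\bz=(1-s)\by+s(\by+\bz)$ with $\by\in(1-t)K^p$ and $\bz\in tD_pK$ gives $\phi_t\geq(1-s)^n|tK|$ on $(1-t)K^p+st\,D_pK$. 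The layer-cake formula then yields, for $\dim K=n$ and $t\in(0,1]$,
\[
|K|^p\geq n\int_0^1(1-s)^{n-1}\big|(1-t+st)K^p-st\Delta_pK\big|\,ds=\sum_{j=0}^nc_{n,p,j}(t)\,V\big(-\Delta_pK[j],K^p[np-j]\big).
\]
Up to the rescaling $\bt=\frac1{p+t}(t,1,\dots,1)$, this is what the paper does via $f_{\bt,K}$ (Lemma \ref{lem:ftK}) and the decomposition $D_p^{\bt}K=K_{\bt}+K'_{\bt}$ in Theorem \ref{T5a}.

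Your equality sketch must be redone on this basis as well. Equality forces $\phi_t$ to coincide with its lower bound. After checking that segments from a plateau point through $\bx$ can be prolonged (the paper's \eqref{eq:d=r}), Brunn--Minkowski equality makes $tK\cap\bigcap_i(K-x_i)$ homothetic to $K$ for all interior $\bx$ off the plateau. Taking $x_2,\dots,x_p\in(1-t)K$ reduces this to $tK\cap(K-x)$ being homothetic to $K$ for all $x\in\inter(K-tK)$. Concluding then needs the Artstein-Avidan--Putterman extension of the Rogers--Shephard lemma to $\lambda=t<1$ (Corollary \ref{cor:simplex}(b)), not the $\lambda=1$ statement about $K\cap(K+v)$.
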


Finally, for any $p\in\mathbb{N}$ and $t\in[0, 1]$ we define the higher-order unbalanced difference body corresponding to a convex body $K\in\mathcal{K}(\mathbb{R}^n)$ as 
$$D_p^tK\coloneqq (1-t)K^p-t\Delta_pK.$$
Notice that for $p=1$ we recover the unbalanced difference body of Artstein-Avidan and Putterman, i.e., $D_1^tK=D^tK$. In this connection, motivated by \cite[Conjecture 2]{ArtsteinPutterman25}, we propose
\begin{con}\label{COn}
   Let $p\in\mathbb{N}$. For any $K\in\mathcal{K}(\mathbb{R}^n)$ with $\dim(K)=n$ and any $t\in[0, 1]$, one has
    \begin{align}
    \label{EQ47ll}
            \frac{|D_p^tK|}{|K|^p}\leq\frac{|D_p^tS_n|}{|S_n|^p},
        \end{align}
    where $S_n\in\calK(\RR^n)$ is an $n$-simplex. If equality holds for some $t\in(0, 1)$, then $K$ is a simplex.
\end{con}
    Observe that Conjecture \ref{COn} is trivial for $t=0$ and $t=1$,  and holds for $t=\frac{1}{2}$ by Schneider's inequality \eqref{eq:Schneider}.  Expanding both sides of \eqref{EQ47ll} using multilinearity of mixed volumes, it is immediate that Conjecture \ref{KK1} implies Conjecture \ref{COn}. Moreover, we prove that even the weaker Conjecture \ref{COn} implies Theorems \ref{T4} and \ref{T5}, extending the same phenomenon from the case $p=1$ considered in \cite{ArtsteinPutterman25} to general $p\in\NN$, see Propositions \ref{pro:impliesT4} and \ref{pro:impliesT5}, respectively.

As pointed out above, we also prove analogous results for the mixed volume of $-\Delta_pK$ and $\iota_1K,\dots,\iota_pK$, that is, related to Conjecture \ref{KK2}. More precisely, the analogs of Theorem \ref{T4} and Corollary \ref{cor:average} are Theorem \ref{T4a} and Corolary \ref{cor:average_mixed}, respectively. Theorem \ref{T5} is in fact proven as a special case of its analog in this more general setting, Theorem \ref{T5a}. Finally, we also define for each $p\in\NN$, $K\in\calK(\RR^n)$, and $\bm t=(t_0,\dots,t_p)\in \RR_+^{p+1}$ satisfying $\sum_{l=0}^pt_l=1$, the more general version of unbalanced higher-order difference body
$$
D_p^{\bt}K:=-t_0\Delta_pK+\sum_{l=1}^pt_l\iota_lK,
$$
and propose Conjecture \ref{COn2} analogous to Conjecture \ref{COn}. To complete the analogy, we show in Propositions \ref{pro:impliesT4a} and \ref{pro:impliesT5a} that Conjecture \ref{COn2} implies Theorems \ref{T4a} and \ref{T5a}, respectively. Note that also all these results boil down to the corresponding results of \cite{ArtsteinPutterman25} for $p=1$.

\subsubsection{Inequalities for higher-order unbalanced joins of convex bodies}
\label{ss:intro:join}

Finally, we generalize to the higher-order setting the results of  Artstein-Avidan, Einhorn, Florentin, and Ostrover \cite{ArtsteinETAL15} who considered the unbalanced join of $K$ and $-K$. More precisely, they proved a sharp upper bound on the volume of $-tK\vee(1-t)K=\conv\big(-tK\cup(1-t)K\big)$ and proposed a related conjecture that interpolates between Conjecture \ref{Godbc} and a long-standing conjecture due to F\'ary and R\'edei \cite{FaryRedei}.

Our main result in this direction is the following inequality that generalizes \cite[Theorem 1.5]{ArtsteinETAL15}:
\begin{thm}
\label{thm:join}
Let $p\in\mathbb{N}$. For any $K\in\mathcal{K}(\mathbb{R}^n)$ with $0\in K$ and any $t\in[0, 1]$, one has
\begin{align}
\label{eq:join}
\left|-t\Delta_pK\vee (1-t)K^p \right|\leq(1-t)^{n(p-1)}|K|^p,
\end{align}
If $\dim(K)=n$ and equality holds for some $t\in(0, 1)$, then $K$ is a simplex with a vertex at the origin.
\end{thm}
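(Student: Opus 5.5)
The plan is a Rogers--Shephard-type fibration argument, following the strategy of \cite{ArtsteinETAL15} for $p=1$ lifted to $\RR^{pn}$. Write $J_t:=-t\Delta_pK\vee(1-t)K^p$. Its points are $z=-\lambda t\Delta_px+(1-\lambda)(1-t)y$ with $\lambda\in[0,1]$, $x\in K$, $y\in K^p$.

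I would introduce an auxiliary convex body $M_t\subset\RR^{pn}\times\RR^n$ of the form
$$M_t=\{(z,w)\mid w\in K,\ z+t\Delta_p w\in (1-t)K^p\}$$
or a suitably rescaled variant; choosing the right $M_t$ is part of the work, as explained below. Compute $|M_t|$ in two ways.
\begin{enuma}
\item Integrating first over $w\in K$, each fiber is a translate of $(1-t)K^p$. Hence $|M_t|=(1-t)^{np}|K|^{p+1}$.
\item Integrating over $z$, the fiber is $F(z)=\{w\in K\mid z+t\Delta_pw\in(1-t)K^p\}$, and $F(z)\neq\emptyset$ exactly for $z$ in the projection of $M_t$. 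I would arrange that this projection contains $J_t$. Then
$$|M_t|\ \ge\ \int_{J_t}|F(z)|\d z\ \ge\ |J_t|\cdot\min_{z\in J_t}|F(z)|.$$
\end{enuma}
Since $M_t$ is convex, $z\mapsto|F(z)|^{1/n}$ is concave on its support by the Brunn--Minkowski inequality. So it suffices to bound $|F(z)|$ from below at the generating points $z=-t\Delta_px$ and $z=(1-t)y$ of the join. Concavity along segments then propagates the bound to all of $J_t$, because a point of $J_t$ is a convex combination of such points and the bound at the endpoints is the same constant. For that step the target is $|F(z)|\ge(1-t)^n|K|$ at these points, which yields $|J_t|\le(1-t)^{np-n}|K|^p$, exactly \eqref{eq:join}. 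The hypothesis $0\in K$ enters here: it provides points such as $w=0$ or $w=x$ inside the fibers, and it lets one fit a homothetic copy $(1-t)K+c$ inside $F(z)$.

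The main obstacle is this lower bound on the fibers. With the naive choice of $M_t$ above, $F(-t\Delta_px)$ and $F((1-t)y)$ do not obviously contain a translate of $(1-t)K$. So the first thing I would try is adjusting the definition: replace $w\in K$ by $w\in K$ together with a shifted or scaled coupling $z+t\Delta_pw-(1-t)\Delta_pw'$, or use the cone construction $\conv\big((-t\Delta_pK)\times\{0\}\cup(1-t)K^p\times\{e\}\big)$ in one extra dimension. The aim is to make both families of fibers contain a copy of $(1-t)K$, while keeping the total volume $(1-t)^{np}|K|^{p+1}$ computable exactly.

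For the equality case, equality forces $|F(z)|^{1/n}$ to be affine along every segment and the fibers to be homothetic. As in the classical Rogers--Shephard equality analysis, this means $K\cap(K+v)$ is homothetic to $K$ for all relevant $v$, which characterizes simplices. The requirement that the extremal configuration be attained at $0$ then places a vertex of the simplex at the origin.
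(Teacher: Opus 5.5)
There is a genuine gap. Your argument rests on one step: a body $M_t$ of exactly computable volume $(1-t)^{np}|K|^{p+1}$ whose fibre over \emph{every} point of $J_t$ has volume at least $(1-t)^n|K|$. You never supply such a body, and for the $M_t$ you write down the fibre bound is false. Take $K=[0,1]^n$, $t\in(0,1)$ and $x=(1,\dots,1)$. Then $F(-t\Delta_px)=K\cap\big(x+\tfrac{1-t}{t}K\big)=\{x\}$, and $F\big((1-t)\Delta_px\big)=\{w\in K\mid (1-t)x+tw\in(1-t)K\}=\{0\}$. So the fibres collapse to points exactly at generating points of $J_t$, and $\min_{J_t}|F|=0$. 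The modifications you suggest do not rescue this loss-free mechanism. The cone $\conv\big((-t\Delta_pK)\times\{0\}\cup(1-t)K^p\times\{e\}\big)$ in one extra dimension also has a one-point fibre, over the exposed point $(1-t)\Delta_p(1,\dots,1)$ of the join of the cube. Your $M_t$ is really the natural body for the Minkowski sum, not the join: it projects onto $(1-t)K^p-t\Delta_pK$. Lemma~\ref{L2} with the section over $z=0$ gives a Schneider-type bound for that much larger body, and at $t=\frac12$ it gives exactly \eqref{eq:Schneider}.

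The missing idea is to drop the uniform fibre bound in favour of the Rogers--Shephard section/projection inequality (Lemma~\ref{L2}). That inequality controls only one section and pays a binomial factor; the paper applies it twice to lifted \emph{joins}, so the factors cancel.
\begin{enuma}
\item First, $C_{p,t,K}=(\{0\}\times(1-t)K^p)\vee(\{1\}\times-t\Delta_pK)$ projects onto $J_t$. Because $0\in K$, its section by $\RR\times\{0\}$ is all of $[0,1]$, so $|J_t|\le(np+1)|C_{p,t,K}|$.
\item Second, $C_{p,t,K}$ is the projection onto $E_t^\perp$ of $T_{p,t,K}=\{(0,0,y)\mid y\in(1-t)K^p\}\vee\{(1,x,-\Delta_px)\mid x\in tK\}$. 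Slicing in the first coordinate gives the exact volume $|T_{p,t,K}|=\frac{n!\,(np)!}{(np+n+1)!}\,t^n(1-t)^{np}|K|^{p+1}$. Its section by $E_t=\{(1-t,v,0)\mid v\in\RR^n\}$ is a copy of $t(1-t)K$. Lemma~\ref{L2} then yields $|C_{p,t,K}|\le\frac{(1-t)^{n(p-1)}}{np+1}|K|^p$, which is Lemma~\ref{L3}.
\end{enuma}

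Your equality paragraph inherits the gap and is too vague in any case. In the paper, equality forces the sections of $T_{p,t,K}$ parallel to $E_t$ to be homothetic. These sections are the bodies $t(1-t)K\cap\bigcap_l\big(t(1-t)K-a_l\big)$, so $K$ is a simplex by Corollary~\ref{cor:simplex}. The vertex at the origin does not come from any ``attained at $0$'' heuristic. It comes from the identity $|Q_{p,t,S_n}|=(1-t)^{n(p-1)}|S_n|^{p-1}|Q_{1,t,S_n}|$ of Lemma~\ref{lem:reduction}, which reduces the question to the $p=1$ equality analysis of \cite{ArtsteinETAL15}.
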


A straightforward consequence of Theorem \ref{thm:join} is the following inequality which gives a somewhat weaker upper bound on the mixed volume of  $-\Delta_pK$ and $K^p$ than conjectured by \eqref{KOTR}. For $p=1$, the inequality reduces to \cite[Theorem 1.4]{ArtsteinETAL15}.

\begin{cor}\label{CORE}
Let  $p\in\NN$ and $0< j< n$.   For any  $K\in\mathcal{K}(\mathbb{R}^n)$ one has
$$    V\big(-\Delta_pK[j],K^p[np-j]\big)\leq \frac{n^n}{(n-j)^{n-j}j^j}|K|^p.$$
\end{cor}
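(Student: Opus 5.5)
The plan is to derive Corollary \ref{CORE} from Theorem \ref{thm:join}, in the same way \cite[Theorem 1.4]{ArtsteinETAL15} follows from \cite[Theorem 1.5]{ArtsteinETAL15}. Both sides of the inequality are translation invariant in $K$: the mixed volume of $-\Delta_pK$ and $K^p$ is unchanged if $K$ is replaced by $K+x$, because this translates $-\Delta_pK$ and $K^p$ by opposite vectors in $\RR^{pn}$. So we may assume $0\in K$. We may also assume $\dim K=n$; otherwise $K^p$ and $\Delta_pK$ lie in a proper subspace of $\RR^{pn}$, and both sides vanish.

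The key step is to compare the join with a suitable slice of the Minkowski sum. Fix $t\in(0,1)$. Since $0\in K$, for every $\lambda\in[0,1]$ we have
$$
\lambda(-t\Delta_pK)+(1-\lambda)(1-t)K^p\subseteq -t\Delta_pK\vee(1-t)K^p .
$$
Choose $\lambda=\lambda(t)$ and take volumes. By multilinearity of mixed volume, the left-hand side has volume
$$
\sum_{i=0}^{n}\binom{np}{i}\lambda^i(1-\lambda)^{np-i}t^i(1-t)^{np-i}\,V\big(-\Delta_pK[i],K^p[np-i]\big).
$$
The sum stops at $i=n$ because $\Delta_pK$ has dimension $n$. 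Every term is nonnegative, so we can drop all terms except $i=j$. Combining with Theorem \ref{thm:join} gives
$$
\binom{np}{j}\lambda^j(1-\lambda)^{np-j}t^j(1-t)^{np-j}\,V\big(-\Delta_pK[j],K^p[np-j]\big)\le(1-t)^{n(p-1)}|K|^p .
$$
This is the same as
$$
V\big(-\Delta_pK[j],K^p[np-j]\big)\le \frac{|K|^p}{\binom{np}{j}\lambda^j(1-\lambda)^{np-j}\,t^j(1-t)^{n-j}} .
$$

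The remaining step is to optimize over $t$ and $\lambda$. Taking $t=j/n$ maximizes $t^j(1-t)^{n-j}$, giving the value $j^j(n-j)^{n-j}/n^n$; this is exactly the factor in the claimed bound. What is left over is the factor $\big(\binom{np}{j}\lambda^j(1-\lambda)^{np-j}\big)^{-1}$, which is at least $1$.

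This leftover factor is where I expect the main obstacle. Getting the sharp constant $n^n/((n-j)^{n-j}j^j)$ without it means using the whole join, not a single Minkowski slice. The join is the union over $\lambda$ of the slices, so I would try to bound its volume from below by integrating the slice volumes. Concretely, I would parametrize the join by a cone-type map $(\lambda,x,y)\mapsto -\lambda t x+(1-\lambda)(1-t)y$ with $x\in\Delta_pK$, $y\in K^p$. Alternatively, I would follow the approach of \cite{ArtsteinETAL15}: write $|-tK\vee(1-t)K|$ as an integral over $\lambda$ of the volumes of sections of the join, and use Brunn--Minkowski-type convexity in the extra coordinate. The expected outcome is the inequality
$$
\big|-t\Delta_pK\vee(1-t)K^p\big|\ \ge\ \binom{n}{j}^{-1}\cdots\,t^j(1-t)^{np-j}\,V\big(-\Delta_pK[j],K^p[np-j]\big),
$$
with a combinatorial constant chosen so that, together with \eqref{eq:join} and $t=j/n$, exactly $n^n/((n-j)^{n-j}j^j)$ comes out. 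Before committing to this constant, I would check it against the case $p=1$, which must reproduce \cite[Theorem 1.4]{ArtsteinETAL15}.
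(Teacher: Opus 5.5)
Your reductions ($0\in K$, $\dim K=n$) and the slice estimate are correct, but they do not prove the corollary. For every $\lambda\in[0,1]$, the number $\binom{np}{j}\lambda^j(1-\lambda)^{np-j}$ is one term of the binomial expansion of $(\lambda+1-\lambda)^{np}=1$. Since $0<j<np$, it is strictly less than $1$. So the leftover factor is strictly larger than $1$ for every $\lambda$, and the single-slice argument can never reach the constant $n^n/((n-j)^{n-j}j^j)$.

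The second half of your plan is only a sketch and ends with an unspecified constant. The natural way to ``integrate the slices'' in an extra coordinate produces the body $C_{p,t,K}$ of \eqref{eq:defCptK}. From $C_{p,t,K}\subseteq[0,1]\times\big(-t\Delta_pK\vee(1-t)K^p\big)$ and Lemma \ref{lem:CptK}, you would only get the bound with an extra factor $np+1$. So there is a genuine gap, exactly at the step you flagged.

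The missing idea is simpler: use monotonicity of mixed volume in each argument directly, instead of the volume of a single Minkowski combination. Put $Q:=-t\Delta_pK\vee(1-t)K^p$. Since $-t\Delta_pK\subseteq Q$ and $(1-t)K^p\subseteq Q$, homogeneity, monotonicity and Theorem \ref{thm:join} give
\begin{align*}
t^j(1-t)^{np-j}\,V\big(-\Delta_pK[j],K^p[np-j]\big)&=V\big(-t\Delta_pK[j],(1-t)K^p[np-j]\big)\\
&\leq V\big(Q[j],Q[np-j]\big)=|Q|\leq(1-t)^{n(p-1)}|K|^p.
\end{align*}
That is, $V\big(-\Delta_pK[j],K^p[np-j]\big)\leq |K|^p/\big(t^j(1-t)^{n-j}\big)$, and choosing $t=j/n$ gives the claim. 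This is exactly the paper's proof. Monotonicity compares the single mixed volume with $|Q|$ at weight $1$. By contrast, $|\lambda A+(1-\lambda)B|$ spreads the volume over all mixed terms with binomial weights summing to $1$, so isolating one term necessarily costs a factor.
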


Finally, we propose the following unbalanced F\'ary--R\'edei type conjecture, which generalizes \cite[Conjecture 1.2]{ArtsteinETAL15}:
\begin{con}\label{QQ}
    Let $p\in\mathbb{N}.$ For any $K\in\mathcal{K}(\mathbb{R}^n)$ with $\dim(K)=n$, and every $t\in[0, 1]$, there exists $x\in K$ such that
    $$
    \frac{\left|-t\Delta_p(K-x)\vee (1-t)(K-x)^p \right|}{|K|^p}\leq\frac{\left|-t\Delta_pS_n\vee (1-t)(S_n)^p \right|}{|S_n|^p},
    $$
    where $S_n\in\calK(\RR^n)$ is a centered $n$-simplex.
    \end{con}
In Proposition \ref{JKLA} we show that Conjecture \ref{QQ}, if true, implies the inequality \eqref{KOTR} in Conjecture \ref{KK1}. This generalizes \cite[Theorem 1.3]{ArtsteinETAL15}.

We point out that, for the sake of brevity, we again list here only the results obtained in the context of Conjecture \ref{KK1}, that is, related to the bodies $-\Delta_pK$ and $K^p$. Analogous results corresponding to Conjecture \ref{KK2} and to the bodies $-\Delta_pK$ and $\iota_1K,\dots,\iota_pK$ are proven in Section \ref{s:join2}. More precisely, the analogs of Theorem \ref{thm:join}, Corollary \ref{CORE}, Conjecture \ref{QQ}, and Proposition \ref{JKLA} are Theorem \ref{TTTT}, Corollary \ref{COREL}, Conjecture \ref{QQQ}, and Proposition \ref{posledni} respectively.

To conclude the introduction, let us emphasize that although the high-level strategy of the proofs of our main results introduced in \S\ref{ss:intro:weighted} and \S\ref{ss:intro:join} is in general motivated by the proofs of the corresponding results in the case $p=1$ given in \cite{ArtsteinPutterman25} and \cite{ArtsteinETAL15}, respectively, the generalization to $p\geq1$ often requires additional ideas and arguments beyond the basic case. This in particular implies to the inequalities involving the convex bodies $-\Delta_pK$ and $\iota_1K,\dots,\iota_pK$;  notable examples are the proofs of Theorems \ref{T5a} and \ref{TTTT}.

\subsection*{Acknowledgements}
This work is based on Master thesis of the first-named author, defended at Charles University, Faculty of Mathematics and Physics. We are grateful to Eli Putterman for a careful reading of the thesis, for numerous valuable comments and suggestions, and for pointing out reference \cite{Schneider00}.

\section{Preliminaries}

In this section, we summarize some basic facts from convex geometry. For more details, we refer the reader to \cite{Schneider2014}. Throughout the paper, we will consider Euclidean space $\mathbb{R}^n$ equipped with the canonical Lebesgue measure  $|\cdot|$ and assume $n\geq2$. We will denote $\mathbb{R}_+\coloneq[0, \infty)$ and $\mathbb{N}_0\coloneq\{0\}\cup\mathbb{N}$. We will denote by $B^n\subset\R^n$ the closed Euclidean unit ball centered at the origin.

\subsection{Convex bodies}

    A convex body is a nonempty, compact, and convex subset $K\subset\mathbb{R}^n$. The family of all convex bodies in $\mathbb{R}^n$, denoted by $\mathcal{K}(\mathbb{R}^n)$, is closed under Minkowski addition
    $$
    K+L:=\{k+l\mid(k,l)\in K\times L\}
    $$
    as well as under scalar multiplication by $r\in\R$
    $$
    rK:=\{rk\mid k\in K\}.
    $$
    When equipped with the Hausdorff metric
   $$
    d_H(K,L):=\inf\{r>0 \mid K\subset L+ rB^n, L\subset K+ rB^n\},
   $$
   $\mathcal{K}(\mathbb{R}^n)$ becomes a locally compact, complete metric space.

    The reflection of $K\in\mathcal{K}(\mathbb{R}^n)$ about the origin is denoted by $-K:=(-1)K$. Similarly, for $K,L\in\calK(\RR^n)$ and $x\in\RR^n$, we abbreviate $K-L:=K+(-L)$ and $K\pm x:=K+\{\pm x\}$. A convex body $K\in\calK(\RR^n)$ is said to be  symmetric   if $-K=K+x$ for some $x\in\RR^n$; $K\in\calK(\RR^n)$ is said to be of constant width if $K-K=rB^n$ for some $r\in\RR_+$.

   We define the dimension of a convex body $K\in\calK(\R^n)$, written $\dim(K)$, as the dimension of the affine span of $K$. We say that $K\in\mathcal{K}(\mathbb{R}^n)$ is full-dimensional if $\dim(K)=n$ or, equivalently,  $\inter(K)\neq\emptyset$.
    
    Let $K_1, K_2\in\mathcal{K}(\mathbb{R}^n)$. We define the join $K_1\vee K_2\coloneqq \textup{conv}(K_1\cup K_2)$ to be the smallest convex body containing both $K_1$ and $K_2$. This definition extends naturally to finitely many convex bodies.

We will frequently make use of the following result due to Rogers and Shephard.
\begin{lemma}[\protect{\cite[Theorem 1]{RogersShephard58}}]\label{L2}
    Let $1\leq j\leq n-1$ and let $E\subset\RR^n$ be a $j$-dimensional affine subspace. Then for any full-dimensional $K\in\calK(\RR^n)$ one has$$
    |P_{E^{\perp}}K|\cdot|K\cap E|\leq {n \choose j}|K|,$$
where $P_{E^\perp}$ denotes the projection to the orthogonal complement of $E$.  

Moreover, equality holds if and only if for every $a\in E^{\perp}$, the set $K\cap (E+\mathbb{R}_+a)$ is obtained by taking the convex hull of $K\cap E$ and one more point.     In particular, if equality holds, then all nonempty sections $K\cap (E+a)$, $a\in E^{\perp}$, are homothetic.
\end{lemma}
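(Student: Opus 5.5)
We reduce to a concavity statement for the section function on $E^\perp$, prove a pointwise lower bound by comparison with a cone, and integrate in polar coordinates. If $K\cap E=\emptyset$ the left-hand side vanishes and there is nothing to prove, so assume $K\cap E\neq\emptyset$. After a translation (which changes neither side) we may assume $0\in K\cap E$, so that $E$ is a linear subspace. Write $\RR^n=E\oplus E^\perp$ and set $L:=P_{E^\perp}K$, a convex body in $E^\perp\cong\RR^{n-j}$ with $0\in L$. Since $K$ is full-dimensional, $\dim L=n-j$. For $z\in L$ define the section function
$$f(z):=\big|K\cap(E+z)\big|_j ,$$
so that Fubini gives $|K|=\int_L f(z)\d z$ and $f(0)=|K\cap E|$.

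First, by the Brunn--Minkowski inequality in dimension $j$, $f^{1/j}$ is concave on $L$. This uses the inclusion
$$K\cap\big(E+(1-\lambda)z+\lambda z'\big)\supseteq(1-\lambda)\big(K\cap(E+z)\big)+\lambda\big(K\cap(E+z')\big),$$
which holds by convexity of $K$. Second, the point $0$ lies in $L$, which is convex and hence star-shaped with respect to $0$. Let $\rho(u)$ be the radial function of $L$ in direction $u\in S^{n-j-1}$. For $z=ru$ with $0\le r\le\rho(u)$, concavity of $f^{1/j}$ on the segment from $0$ to $\rho(u)u$, together with $f\ge0$ at the endpoint, gives
$$f(ru)\ \ge\ \Big(1-\frac{r}{\rho(u)}\Big)^{j} f(0).$$
Third, we integrate in polar coordinates on $E^\perp$:
$$|K|=\int_{S^{n-j-1}}\int_0^{\rho(u)} f(ru)\,r^{n-j-1}\d r\d u\ \ge\ f(0)\int_{S^{n-j-1}}\rho(u)^{n-j}\,B(n-j,j+1)\d u .$$
Since $|L|=\frac1{n-j}\int\rho^{n-j}\d u$, the right-hand side equals
$$f(0)\,|L|\,(n-j)\frac{(n-j-1)!\,j!}{n!}=\frac{|K\cap E|\,|P_{E^\perp}K|}{\binom nj}.$$
This is the claimed inequality. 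The case where $0\in\partial L$ causes no trouble: rays with $\rho(u)=0$ contribute nothing, and the polar formula still holds.

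For the equality case, equality forces the pointwise bound to be an equality for a.e. $u$ and all $r\le\rho(u)$, and by continuity for all $u$. Concavity of $f^{1/j}$ along the ray then makes it affine there, vanishing at $\rho(u)u$. From here we must recover the geometric description, and this is the step I expect to be the main obstacle. Equality in the Brunn--Minkowski step between the sections at $0$ and at $ru$ implies that these sections are homothetic and that the inclusion above is an equality. This shows that $K\cap(E+\RR_+u)$ is exactly the convex hull of $K\cap E$ and the (single) point of $K$ over $\rho(u)u$. Conversely, if every such piece is such a cone, then $f(ru)=(1-r/\rho(u))^j f(0)$ exactly, and the computation above is an equality. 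The homothety of all nonempty sections is then immediate from the cone structure along each ray.

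The delicate points are two. One is handling the equality case of Brunn--Minkowski for possibly lower-dimensional sections near $\partial L$, where the sections degenerate. The other is passing from ``a.e.\ direction'' to ``every $a\in E^\perp$''. I would handle both by continuity of sections of a convex body in the Hausdorff metric on the interior of $L$, and by closedness of $K$.
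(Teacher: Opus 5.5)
Your proof of the inequality is complete and correct. It is the standard Rogers--Shephard argument behind the cited theorem; the paper gives no proof of its own and only quotes it. The ingredients are Brunn concavity of $f^{1/j}$, comparison with the cone profile $(1-\|z\|_L)^j f(0)$, and polar integration, and your Beta-function computation is right. The equality characterization, however, is not yet proved as written, for two concrete reasons.

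\emph{The apex is asserted, not derived.} Write $A_r$ for $K\cap(E+ru)$ translated into $E$, and $R=\rho(u)$. Knowing $f(Ru)=0$ only says that $A_R$ has zero $j$-volume, not that it is a single point. A short volume argument closes this and removes the need for the equality case of Brunn--Minkowski, including your worry about degenerate sections.

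\begin{enumerate}
\item Convexity of $K$ gives $A_r\supseteq(1-\frac rR)A_0+\frac rR A_R$ for $0<r<R$.
\item We have $|A_0|=f(0)>0$, since otherwise there is no equality.
\item Adding a nondegenerate segment to the full-dimensional body $(1-\frac rR)A_0$ strictly increases its $j$-volume. So $|A_r|=(1-\frac rR)^j|A_0|$ forces $A_R=\{x\}$.
\item Nested convex bodies of equal positive volume coincide, so $A_r=(1-\frac rR)A_0+\frac rR x$. This is exactly the cone.
\end{enumerate}

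\emph{The passage from a.e.\ $u$ to every $u$ fails as proposed when $0\in\partial L$.} Continuity on $\inter L$ handles the case $0\in\inter L$. But $0\in\partial L$ genuinely occurs in equality cases. Take a tetrahedron with $E$ the line through an edge: then $L$ is a triangle with $0$ as a vertex, and equality holds. Here there are directions $u$ whose whole segment $(0,\rho(u)u)$ lies in $\partial L$. Over such points the sections miss $\inter K$ and need not vary continuously. Closedness of $K$ only gives upper semicontinuity, $f(z_0)\ge\limsup_{z\to z_0}f(z)$, which is the wrong direction.

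What you need is the reverse bound $f(z_0)\le(1-\|z_0\|_L)^j f(0)$ for $z_0\in\partial L$. It follows from concavity along a segment into the interior:

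\begin{enumerate}
\item Pick $w\in\inter L$ and set $z_s=(1-s)z_0+sw$.
\item Concavity gives $f(z_s)^{1/j}\ge(1-s)f(z_0)^{1/j}+s f(w)^{1/j}$.
\item Since $z_s\in\inter L$ for $s\in(0,1]$, already $f(z_s)^{1/j}=(1-\|z_s\|_L)f(0)^{1/j}$.
\item The gauge $\|\cdot\|_L$ is convex and lower semicontinuous, hence continuous along this segment.
\item Letting $s\to0$ yields the bound.
\end{enumerate}

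After this, the first fix applies to every direction. Your converse implication and the homothety of all nonempty sections are then fine as you state them.
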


Let us also recall the classical Brunn-Minkowski inequality.

\begin{thm}[{\cite[Theorem 7.1.1]{Schneider2014}}]\label{BM}
    For all $K_1, K_2\in\mathcal{K}(\mathbb{R}^n)$ and $t\in[0,1]$ one has $$\big|(1-t)K_1+tK_2\big|^{\frac{1}{n}}\geq (1-t)|K_1|^{\frac{1}{n}}+t|K_2|^{\frac{1}{n}}.$$
Moreover, if $\dim K_1=\dim K_2=n$, then equality holds for some $t\in (0, 1)$ if and only if $K_1$ and $K_2$ are homothetic.
\end{thm}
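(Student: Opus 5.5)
The statement is the Brunn--Minkowski inequality, which the paper quotes from \cite[Theorem 7.1.1]{Schneider2014}. I would prove it with the classical Hadwiger--Ohmann box argument, followed by a separate analysis of equality.

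\textbf{Reductions.} If $t\in\{0,1\}$ there is nothing to prove. If $|K_1|=0$ or $|K_2|=0$, say $|K_1|=0$, pick $x\in K_1$. Then $(1-t)K_1+tK_2\supset (1-t)x+tK_2$, so
$$\big|(1-t)K_1+tK_2\big|^{\frac1n}\ge t|K_2|^{\frac1n},$$
which is the claim. So assume both bodies are full-dimensional and $t\in(0,1)$.

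By homogeneity $|rK|=r^n|K|$, it suffices to prove the additive form $|A+B|^{1/n}\ge|A|^{1/n}+|B|^{1/n}$ with $A=(1-t)K_1$ and $B=tK_2$.

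\textbf{Boxes.} First I treat finite unions of axis-parallel boxes with disjoint interiors, by induction on the total number $N$ of boxes in $A$ and $B$.

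For $N=2$, i.e.\ single boxes with side lengths $a_i,b_i$, the sum $A+B$ is a box with sides $a_i+b_i$. The inequality then reduces, by AM--GM applied to the ratios $a_i/(a_i+b_i)$ and $b_i/(a_i+b_i)$, to
$$\prod_i\Big(\frac{a_i}{a_i+b_i}\Big)^{1/n}+\prod_i\Big(\frac{b_i}{a_i+b_i}\Big)^{1/n}\le 1.$$

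For the inductive step, suppose $A$ has at least two boxes. Separate two of its boxes by a coordinate hyperplane $H$, giving $A=A^+\cup A^-$, each with fewer boxes. Translate $B$ so that a parallel hyperplane splits it into $B^\pm$ with $|B^\pm|/|B|=|A^\pm|/|A|$. Then $A^++B^+$ and $A^-+B^-$ lie on opposite sides of $H$, so
$$|A+B|\ge|A^++B^+|+|A^-+B^-|.$$
Apply the induction hypothesis to each pair and sum; the volume ratios are equal, so the two terms combine to $\big(|A|^{1/n}+|B|^{1/n}\big)^n$.

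For convex bodies, approximate $A$ and $B$ from inside by unions of grid boxes. The Minkowski sum of the approximants lies inside $A+B$, and their volumes converge to $|A|$ and $|B|$, which gives the inequality.

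\textbf{Equality case.} This is the step I expect to be the main obstacle, since the box approximation loses all information about equality. I would instead use the Knothe map, or Brenier transport, $T:\inter K_1\to\inter K_2$. It has the form $T=\nabla\varphi$ with $D T$ positive definite (triangular with positive diagonal in the Knothe case) and $\det DT=|K_2|/|K_1|$. The map $x\mapsto(1-t)x+tT(x)$ sends $K_1$ injectively into $(1-t)K_1+tK_2$. Hence
$$\big|(1-t)K_1+tK_2\big|\ge\int_{K_1}\det\big((1-t)I+tDT\big)\,dx,$$
and the Minkowski determinant inequality bounds the integrand below by $\big((1-t)+t(|K_2|/|K_1|)^{1/n}\big)^n$. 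This reproves the inequality, and it also shows that equality forces $DT$ to be a constant multiple of $I$ almost everywhere. Then $T(x)=\lambda x+v$, so $K_2=\lambda K_1+v$, i.e.\ the bodies are homothetic.

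Conversely, if $K_2=\lambda K_1+v$, convexity gives $(1-t)K_1+tK_2=\big((1-t)+t\lambda\big)K_1+tv$, and equality holds directly.
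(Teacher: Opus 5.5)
Your reductions, the Hadwiger--Ohmann argument for the inequality, and the converse direction of the equality statement are all fine. The paper gives no proof of its own and only cites Schneider, whose argument is the Kneser--S\"uss induction on dimension. The gap is in the equality case, where you conflate two different maps.

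The Knothe map is not a gradient. Its Jacobian $DT$ is triangular with positive diagonal and in general not symmetric. For such matrices, $\det\big((1-t)I+tDT\big)\geq\big((1-t)+t\det(DT)^{1/n}\big)^n$ is just the AM--GM/H\"older inequality for the diagonal entries $\partial_iT_i$. Equality therefore only forces $\partial_1T_1=\dots=\partial_nT_n=\lambda:=(|K_2|/|K_1|)^{1/n}$ a.e. It says nothing about the off-diagonal entries, so ``$DT=\lambda I$'' does not follow.

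For the Brenier map $T=\nabla\varphi$, the equality case of Minkowski's determinant inequality does give $D^2\varphi=\lambda I$ a.e. But for a merely convex $\varphi$ this is the Alexandrov Hessian, i.e.\ only the absolutely continuous part of the distributional Hessian. It does not make $\nabla\varphi$ affine: $\varphi(x)=\frac{\lambda}{2}|x|^2+|x_1|$ has Alexandrov Hessian $\lambda I$ a.e. To conclude $T(x)=\lambda x+v$ you need an input you have not stated, namely Caffarelli's interior regularity for the Brenier map between uniform measures with convex target. That same theorem also justifies the change-of-variables inequality you use for the a priori non-smooth map $(1-t)\mathrm{id}+t\nabla\varphi$.

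A more elementary repair stays with Knothe and uses only the first coordinate. Given any unit vector $u$, build the Knothe map in an orthonormal basis with $e_1=u$. Then $T_1$ is the monotone rearrangement of the $u$-marginals of the normalized uniform measures on $K_1$ and $K_2$. Equality gives $T_1'=\lambda$ a.e. on the projection interval, since sections over interior points have positive $(n-1)$-volume, hence $T_1(s)=\lambda s+c(u)$. Evaluating at the top endpoint gives $h_{K_2}(u)=\lambda h_{K_1}(u)+c(u)$. Comparing first moments of the marginals gives $c(u)=\langle g_2-\lambda g_1,u\rangle$, where $g_i$ is the centroid of $K_i$. Since $u$ is arbitrary, $h_{K_2}=h_{\lambda K_1+g_2-\lambda g_1}$, i.e.\ $K_2=\lambda K_1+g_2-\lambda g_1$. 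This is essentially the mechanism of the Kneser--S\"uss equality argument in the reference the paper cites.
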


Finally, for $K_1, K_2\in\mathcal{K}(\mathbb{R}^n)$ with $\dim K_1=\dim K_2=n$ and $0\in\inter(K_1)$, and $x\in\RR^n$, we define the $K_1$-anisotropic distance as $$d_{K_1}(x, K_2)\coloneqq \min\{r\geq0 : x\in K_2+rK_1\}.$$
Observe that the function $x\mapsto d_{K_1}(x, K_2)$ is continuous. Furthermore, for $r\geq0$, one has $K_2+rK_1=\{x\in\mathbb{R}^n : d_{K_1}(x, K_2)\leq r\}$.

    \subsection{Higher-order difference body}
    \label{ss:higherDK}
    Let $p\in\NN$. Recall that the higher-order difference body of $K\in\calK(\RR^n)$ is given by $D_pK:=-\Delta_pK+K^p$, where $\Delta_p:\RR^n\to(\RR^n)^p$ denotes the diagonal embedding, i.e., $\Delta_p(x)=(x,\dots,x)$ for $x\in\RR^n$. It is straightforward to verify that
$$
D_pK=\left\{(x_1,\dots,x_p)\in(\RR^n)^p\mid K\cap(K-x_1)\cap\cdots\cap(K-x_p)\neq\emptyset\right\},
$$
    see \cite[Proposition 3.1]{K25}. In fact, this was the original definition of Schneider \cite{Schneider70} who, however, used the opposite sign convention, i.e.,$$
\left\{(x_1,\dots,x_p)\in(\RR^n)^p\mid K\cap(K+x_1)\cap\cdots\cap(K+x_p)\neq\emptyset\right\}=\Delta_pK-K^p.
$$
    
Observe that if $\dim K=n$, then $0\in\inter (D_pK)$. Indeed, if $\dim K=n$, there exist $\epsilon>0$ and $x\in\inter (K)$ such that $x+\epsilon B^n\subset K$. Then for any $w\in \epsilon B^n$ one has $x\in K-w$ which shows that $(\epsilon B^n)^p\subset D_pK$.

 \subsection{Simplices}
 
 \label{ss:simplices}
    
   Let $0\leq k\leq n$.  A convex body $K\in\mathcal{K}(\mathbb{R}^n)$ is called a $k$-simplex (or just simplex) if $K=\conv\{v_0, \dots, v_k\}$ is the convex hull of affinely independent points $v_0,\dots,v_k\in\mathbb{R}^n$. We say that a $k$-simplex $\conv\{v_0, \dots, v_k\}$ is centered if $v_0+\dots+v_k=0$.
   
 If $S_n=\conv\{v_0, \dots, v_n\}\subset\RR^n$ is an $n$-simplex, there exist affine functions $A_j\colon \mathbb{R}^n\to\mathbb{R}$, $j=0, \dots, n$, satisfying $\sum_{j=0}^nA_j=1$ and $A_j(v_i)=\delta_{ij}$ such that
 $$S_n=\{x\in\mathbb{R}^n \mid A_j(x)\geq0, j=0, \dots, n\}.$$
   
We will need the following characterization of simplices, first proven for $\lambda=1$ by Rogers and Shephard \cite{RogersShephard57} and later extended to $\lambda\in(0,1)$ by Artstein-Avidan and Putterman \cite{ArtsteinPutterman25}:
\begin{lemma}[{\cite[Lemma 4]{RogersShephard57}} and {\cite[Proposition 10]{ArtsteinPutterman25}}]
\label{lem:simplex}
Let $K\in\mathcal{K}(\mathbb{R}^n)$ with $\dim(K)=n$, and let $\lambda\in (0, 1]$. If $(\lambda K+x)\cap K $ is homothetic to $K$ for all $x\in\inter(K-\lambda K)$, then $K$ is a simplex.  
\end{lemma}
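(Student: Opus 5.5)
The plan is to reduce to convex polytopes and then use a combinatorial counting argument, exploiting the fact that a homothetic copy of $K$ has exactly the same face lattice as $K$. Throughout, write $C(x)=(\lambda K+x)\cap K$ and, by hypothesis, $C(x)=\mu(x)K+y(x)$ with $\mu(x)>0$ for $x\in\inter(K-\lambda K)$.

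\emph{Step 1: $K$ is a polytope.} Every extreme point of $C(x)$ is either an extreme point of $K$, an extreme point of $\lambda K+x$, or a point of $\partial K\cap\partial(\lambda K+x)$. Choose $x$ so that $\lambda K+x$ crosses $\partial K$ transversally along a relatively open region.
\begin{itemize}
\item If $K$ is strictly convex somewhere there, the crossing creates a ``ridge'' in $\partial C(x)$: a set of non-smooth boundary points of dimension $n-2$.
\item Since $C(x)$ is homothetic to $K$, $K$ must then have the same kind of singular set.
\item By letting $x$ vary continuously and comparing the singular sets of $C(x)$ and $K$ (which are rigid up to homothety), I expect to show that the set of normal directions of $K$ is finite. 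Hence $K$ is a polytope.
\end{itemize}
I would try to streamline this using the fact that support functions satisfy $h_{C(x)}=\mu h_K+\langle y,\cdot\rangle$. Combined with the description of the normal cones of an intersection, this identity should show that the normal cones of $C(x)$ cover only finitely many directions.

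\emph{Step 2: counting facets.} Now let $K$ be a polytope with $f$ facets. Fix a vertex $v$ of $K$ and take $x=(1-\lambda)v+\varepsilon w$, where $w$ points into the interior of the tangent cone of $K$ at $v$ and $\varepsilon>0$ is small.
\begin{itemize}
\item Then $\lambda K+x\subset K+\varepsilon w$, and $\lambda K+x$ sticks out of $K$ only near the vertex $\lambda v+x$.
\item So $C(x)$ is $\lambda K+x$ truncated near one vertex by the facets of $K$ through $v$.
\item These facets of $K$ are parallel to the facets of $\lambda K+x$ through $\lambda v+x$. Choosing $w$ generically, the truncation creates new vertices but no new facet directions.
\end{itemize}
Comparing vertex counts of $C(x)$ and $K$, the vertex $\lambda v+x$ is replaced by several vertices unless the truncation is degenerate. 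This forces the truncation to cut off exactly one vertex and replace it by one vertex, which happens precisely when $v$ is a simple vertex whose tangent cone is compatible with the shift.

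Varying $w$ over the whole interior of the tangent cone (and using the fact that this is allowed for all $x\in\inter(K-\lambda K)$), I would derive the following. For each vertex $v$ and each facet $F$ not containing $v$, pushing $\lambda K+x$ towards $F$ must never create a new vertex. This should force every facet of $K$ to be opposite to exactly one vertex, i.e.\ every vertex lies on all facets but one. Then $K$ has $n+1$ facets, and any $n$ of them meet in a vertex, so $K$ is a simplex.

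\emph{Main obstacle.} I expect the delicate part to be the combinatorial bookkeeping in Step 2, namely showing that a non-simplex polytope always admits some admissible $x$ for which $C(x)$ has a different number of vertices or facets than $K$. What I would try first is the following. Take two vertices $v,v'$ of $K$ not joined by an edge; such a pair exists unless $K$ is $2$-neighbourly, and in dimension $3$ the only $2$-neighbourly polytope is the simplex. Then choose $x$ near $(1-\lambda)v$, displaced slightly towards $v'$, and show that $C(x)$ acquires a facet with a normal not among those of $K$. In higher dimensions I would replace the non-edge $v,v'$ by a suitable non-face of $K$. Step 1 is a secondary obstacle; if it resists, I would instead approximate $K$ by polytopes, although the homothety hypothesis does not obviously pass to approximations, so a direct argument is preferable.
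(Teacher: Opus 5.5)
\textbf{Overall.} The paper gives no proof of this lemma: it quotes it from Rogers--Shephard ($\lambda=1$) and Artstein-Avidan--Putterman ($\lambda\in(0,1)$). Judged on its own, your proposal has genuine gaps in both steps.

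\textbf{Step 1 is not an argument.} Every unit vector is an outer normal of every convex body, so ``the set of normal directions of $K$ is finite'' cannot be the target. What you need is finitely many extreme points. Neither the ridge heuristic nor the identity $h_{C(x)}=\mu h_K+\langle y,\cdot\rangle$ is turned into a proof of that; non-smooth boundary points can be dense in $\partial K$, so their mere presence bounds nothing. As you note yourself, the hypothesis does not pass to polytopal approximations. So the reduction to polytopes, an essential part of the lemma for general $K$, is missing.

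\textbf{Step 2 fails even for polytopes.} First, your choice of $x$ gives no information:
\begin{itemize}
\item For $w\in\inter T(K,v)$ and $\lambda<1$, the body $\lambda K+x=v+\lambda(K-v)+\varepsilon w$ lies in $\inter K$ for small $\varepsilon$. On facets through $v$ one has $\langle w,u_F\rangle<0$, and on the others there is the margin $(1-\lambda)(h_K(u_G)-\langle v,u_G\rangle)>0$. Hence $C(x)=\lambda K+x$, which is trivially homothetic to $K$.
\item For $\lambda=1$, $K+\varepsilon w$ protrudes across every facet with $\langle w,u_G\rangle>0$, not only near one vertex.
\end{itemize}
With the opposite sign, $-w\in\inter T(K,v)$, and $\lambda<1$, one gets $C(x)=(\lambda K+x)\cap(v+T(K,v))$. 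The facets of $\lambda K+x$ through $\lambda v+x$ are pushed inward by $\varepsilon\langle w,u_F\rangle$. All cutting hyperplanes pass through $v$, so the apex is replaced by the single vertex $v$, not by ``several vertices''.

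More fundamentally, counting is the wrong invariant. $C(x)$ is cut out by hyperplanes with the facet normals of $K$, so it never acquires a new facet normal; the plan in your last paragraph is therefore impossible. Moreover, for simple polytopes (every polygon, the cube) small facet translations preserve the combinatorial type. For example, take $K=[0,1]^2$, $v=0$, $w=(-a,-b)$ with $a,b>0$. Then $C(x)$ is the rectangle $[0,\lambda-\varepsilon a]\times[0,\lambda-\varepsilon b]$, combinatorially identical to $K$ but not homothetic to it when $a\neq b$. The obstruction is metric, and your conclusion that every facet is opposite exactly one vertex has no argument behind it.

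\textbf{A metric argument that works for polytopes.} Take $-w\in\inter T(K,v)$ and $\varepsilon$ small.
\begin{itemize}
\item On the facet normals $u_i$ of $K$ one has $h_{C(x)}(u_i)=\min\{h_K(u_i),\lambda h_K(u_i)+\langle x,u_i\rangle\}$.
\item Homothety forces this vector into the $(n+1)$-dimensional span of $(h_K(u_i))_i$ and $(\langle y,u_i\rangle)_i$, $y\in\RR^n$.
\item Letting $w$ range over an open set then shows that at most one facet of $K$ misses $v$.
\item Choose for each facet a vertex off it. This makes the affine functions $h_K(u_i)-\langle\cdot,u_i\rangle$ linearly independent, so $K$ has at most $n+1$ facets.
\end{itemize}
This repairs Step 2 only; Step 1 would still have to be done.
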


More precisely, we will use the following two easy consequences of Lemma \ref{lem:simplex}:
\begin{corollary}
\label{cor:simplex}
Let $p\in\NN$ and $0<s\leq t\leq1$. If $K\in\mathcal{K}(\mathbb{R}^n)$ with $\dim(K)=n$ is homothetic
\begin{enuma}
\item either to $   K\cap\bigcap_{i=1}^{p}(K-w_i) $ for all $(w_1,\dots,w_p)\in D_pK$, 
\item or to $(tK-w)\cap sK$ for all $w\in\inter(tK-sK)$,
\end{enuma}
then K is a simplex.
\end{corollary}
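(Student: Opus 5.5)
For both parts the plan is to reduce directly to Lemma \ref{lem:simplex}, choosing $\lambda$ and $x$ appropriately. Throughout I use that being homothetic to $K$ is invariant under translations and under positive dilations.

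For (a), I restrict to points of $D_pK$ of the special form $(w,0,\dots,0)$. Such a point lies in $D_pK$ if and only if
$$
K\cap(K-w)\cap K\cap\cdots\cap K=K\cap(K-w)\neq\emptyset,
$$
that is, if and only if $w\in K-K=DK$. The hypothesis then says that $K\cap(K-w)$ is homothetic to $K$ for every $w\in DK$, in particular for every $w\in\inter(K-K)$. Since $K-K$ is origin-symmetric, $w\in\inter(K-K)$ exactly when $x:=-w\in\inter(K-K)$. With $\lambda=1$ we have $(\lambda K+x)\cap K=(K-w)\cap K$. So the hypothesis of Lemma \ref{lem:simplex} holds with $\lambda=1$, and $K$ is a simplex.

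For (b), set $\lambda:=s/t\in(0,1]$; this is where the assumption $s\leq t$ enters. Dilating by $1/t$ shows that $(tK-w)\cap sK$ is homothetic to $K$ if and only if $(K-w/t)\cap\lambda K$ is. Translating by $w/t$, the latter holds if and only if $K\cap(\lambda K+w/t)$ is homothetic to $K$. Put $x:=w/t$. The map $w\mapsto w/t$ carries $\inter(tK-sK)$ bijectively onto $\inter(K-\lambda K)$. Hence the hypothesis gives that $(\lambda K+x)\cap K$ is homothetic to $K$ for all $x\in\inter(K-\lambda K)$, and Lemma \ref{lem:simplex} again shows that $K$ is a simplex.

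The only point requiring care is the normalization in (b). Dividing by $s$ instead of $t$ would produce the factor $t/s\geq1$, which lies outside the range $\lambda\in(0,1]$ of Lemma \ref{lem:simplex}. One must therefore scale by $1/t$ so that the smaller body is the one multiplied by $\lambda$.
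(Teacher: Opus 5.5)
Your proof is correct and follows the paper's argument: both parts reduce directly to Lemma~\ref{lem:simplex}, with $\lambda=1$ in (a) and $\lambda=s/t$ in (b). The differences are cosmetic. In (a) the paper uses the diagonal point $-\Delta_p(x)\in D_pK$ where you use $(w,0,\dots,0)$, and in (b) it applies the lemma to the body $tK$ instead of rescaling back to $K$.
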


\begin{proof}
Assume first that the condition (a) holds. If $x\in K-K$, then $K\cap (K+x)\neq\emptyset$. Hence $-\Delta_p(x)\in D_pK$ and so $K$ is homothetic to $K\cap(K+x)$. By Lemma \ref{lem:simplex}, $K$ is a simplex.

Second, assume that (b) holds and take $x\in\inter\big(tK-(st^{-1})tK\big)$. Then $(tK-x)\cap sK$ is homothetic to $K$. Equivalently, $(tK)\cap(sK+x)=(tK)\cap\big((st^{-1})tK+x\big)$ is homothetic to $tK$. Since $\dim( tK)=n$ and $st^{-1}\in(0,1]$, Lemma \ref{lem:simplex} implies that  $K$ is a simplex.
\end{proof}

It is well known (and easy to verify) that if $K=S_n$ is an $n$-simplex, all these properties hold. We will need an explicit description of the intersection from Corollary \ref{cor:simplex}(a) in this case. Choose affine functions $A_j\colon \mathbb{R}^n\to\mathbb{R}$, $j=0,\dots,n$, as above and write $A_j(x)=\alpha_j(x)+\beta_j$, where $\alpha_j\colon\mathbb{R}^n\to\mathbb{R}$ is linear and $\beta_j\in\mathbb{R}$. Then for $\bw=(w_1,\dots,w_p)\in\RR^{np}$ we have
$$
S_n-w_i=\{x\in\mathbb{R}^n \mid A_j(x)\geq -\alpha_j(w_i), j=0, \dots, n\}, \quad i=1, \dots, p.
$$
Thus
$$S_n\cap\bigcap_{i=1}^{p}(S_n-w_i)=\{x\in\mathbb{R}^n \mid A_j(x)\geq m_j(\bw), j=0, \dots, n\},$$
where $m_j(\bw):=\max\{0, -\alpha_j(w_1), \dots, -\alpha_j(w_{p})\}$, $j=0, \dots, n$. Put
\begin{align}
\label{eq:rhoy}
\rho(\bw):=1-\sum_{j=0}^n m_j(\bw)\qquad\text{and}\qquad y(\bw):=\sum_{j=0}^n m_j(\bw)v_j.
\end{align}
Then a straightforward calculation shows that
\begin{align}
\label{eq:Sncap}
S_n\cap\bigcap_{i=1}^{p}(S_n-w_i)=\begin{cases}y(\bw)+\rho(\bw)S_n,&\text{if }\rho(\bw)\geq0;\\\emptyset,&\text{if }\rho(\bw)<0.\end{cases}
\end{align}
Observe also that  for $s>0$ one has $\rho(s\bw)=1-s\big(1-\rho(\bw)\big)$ and $y(s\bw)=sy(\bw)$.

\subsection{Minkowski asymmetry measure}
\label{ss:asymmetry}
Let $K\in\calK(\RR^n)$ be a convex body with $\dim K=n$. The Minkowski asymmetry measure of $K$ is defined by
$$
s(K)\coloneqq \inf\Big\{t>0\mid -K\subset x+tK \text{ for some } x\in\mathbb{R}^n\Big\},
$$
see \cite[\S6.1]{Grunbaum63}. It holds $s(K)\in[1, n]$ and there always exists $x\in\mathbb{R}^n$ such that
$$-K\subset x+s(K)K.$$
Observe that $s(-K)=s(K)$. Moreover, we have $s(K)=1$ if and only if $K$ is symmetric and $s(K)=n$ if and only if $K$ is a simplex. Finally, the map
$$s\colon \{K\in\mathcal{K}(\mathbb{R}^n)\mid \dim(K)=n\}\to[1, n]$$
is continuous in the Hausdorff metric. Observe that these properties together with the monotonicity of mixed volume imply at once the cases $j=1,n-1$ of Conjecture \ref{Godbc}.

\subsection{Mixed volumes}

The mixed volume of convex bodies $K_1, \dots, K_n\in\calK(\RR^n)$ is defined as
$$V(K_1, \dots, K_n)\coloneqq \frac{1}{n!}\left.\frac{\partial^n}{\partial t_1\cdots\partial t_n}\right|_{t_1=\cdots=t_n=0}|t_1K_1+\dots+t_nK_n|.$$ 
According to a classical result of Minkowski, the function $(t_1,\dots,t_n)\mapsto|t_1K_1+\dots+t_nK_n|$ is, in fact, an $n$-homogeneous polynomial on $\R_+^n$.

Mixed volumes satisfy the following identities that will be used throughout the paper. Let $L, K_1, \dots, K_n\in\mathcal{K}(\mathbb{R}^n)$ be convex bodies, $a\geq0$, $\rho\colon\{1, \dots, n\}\to\{1, \dots, n\}$ a permutation, and $T\colon\mathbb{R}^n\to\mathbb{R}^n$ a linear endomorphism. Then
\begin{align*}
V(K_1+\alpha L, \dots, K_n)&= V(K_{1}, \dots, K_{n})+\alpha V(L, K_{2}, \dots, K_{n}),\\
V(L, L, \dots, L)&= |L|,\\
V(K_{\rho(1)}, \dots, K_{\rho(n)})&=V(K_1, \dots, K_n),\\
V\big(T(K_1), \dots, T(K_n)\big)&= |\det(T)| V(K_{1}, \dots, K_{n})
\end{align*}
If $L\subset K_1$, then
$$
V(L, K_2, \dots, K_n)\leq V(K_1, K_2, \dots, K_n);
$$
in particular,
$$
V(K_1,\dots,K_n)\geq0.
$$
Moreover, $V(K_1, \dots, K_n)>0$ if and only if there exist segments $I_i\subset K_i$, $i=1,\dots, n$, with linearly independent directions.

Let us emphasize that throughout the article the Lebesgue measure and the mixed volume will be considered on Euclidean spaces of various dimensions. We believe it should always be clear from the context what the dimension is.

\section{Higher-order Godbersen conjectures in low dimensions}

\label{s:lowdim}

We will first verify Conjectures \ref{KK1} and \ref{KK2} for certain extremal values of parameters $j$ and $j_0,\dots,j_p$, respectively. As we will see, such special cases in particular imply that the conjectures are true in low dimensions.

Let $p\in\NN$. Recall that $\iota_i:\RR^n\to\RR^{pn}$,  $1\leq i\leq p$, denotes the inclusion into the $i$-th factor of $\RR^{pn}=\RR^n\times\cdots\RR^n$, i.e., $\iota_i(x)=(0,\dots,0,x,0,\dots,0)$ for $x\in\RR^n$. Moreover, the following notation will be used for the rest of the paper: We set
$$
\calJ_p^n:=\left\{(j_0,\dots,j_p)\in\NN^{p+1}_0\mid j_0+\cdots+j_p=n\right\},
$$
and for a fixed $K\in\calK(\RR^n)$ we define the function $W_{p,K}:\calJ_p^n\to\RR_+$ by
$$
W_{p,K}(\bj):=V\big(-\Delta_pK[n-j_0],\iota_1K[n-j_1],\dots,\iota_pK[n-j_p]\big),\quad \bj=(j_0,\dots,j_p)\in\calJ_p^n.
$$
For $\bj=(j_0,\dots,j_p)\in\calJ_p^n$ we will also abbreviate
$$
{n\choose \bj}:={n\choose j_0,\dots,j_p}\qquad\text{and}\qquad {np\choose n-\bj}:={np\choose n-j_0,\dots,n-j_p}.
$$

We will repeatedly use the following simple observation:
\begin{lemma}[{\cite[Remarks 3.4 and 3.5]{K25}}]
\label{lem:reduce0}
For each $p\geq2$ and $K\in\calK(\RR^n)$, the function $W_{p,K}$ is symmetric and satisfies
\begin{align*}
W_{p,\,K}(j_0,\dots,j_{p-1},0)={np \choose n}^{-1}|K|\,W_{p-1, \, K}(j_0, \dots, j_{p-1}),\quad (j_0,\dots,j_{p-1})\in\calJ_{p-1}^n.
\end{align*}
\end{lemma}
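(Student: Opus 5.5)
The plan is to prove the two assertions separately: symmetry by a volume-preserving linear change of coordinates on $\RR^{np}$, and the reduction formula by the standard decomposition of a mixed volume in which $n$ arguments lie in a common $n$-dimensional subspace.

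\emph{Symmetry.} Throughout we use the transformation rule $V\big(T(K_1),\dots,T(K_{np})\big)=|\det T|\,V(K_1,\dots,K_{np})$. It suffices to find linear maps $T$ with $|\det T|=1$ that permute the family $\{-\Delta_pK,\iota_1K,\dots,\iota_pK\}$ by transpositions generating the symmetric group on $p+1$ letters.
\begin{itemize}
\item For the transpositions among the $\iota_iK$, take the block permutation of the factors of $(\RR^n)^p$. It has determinant $\pm1$, fixes $\Delta_pK$, and swaps $\iota_iK$ with $\iota_kK$.
\item For the transposition of $-\Delta_pK$ with $\iota_1K$, take
$$T(x_1,\dots,x_p):=(-x_1,\,x_2-x_1,\,\dots,\,x_p-x_1).$$
\end{itemize}
One checks directly that $T(\iota_1x)=(-x,\dots,-x)=-\Delta_p(x)$, that $T(-\Delta_p(x))=(x,0,\dots,0)=\iota_1x$, and that $T(\iota_ix)=\iota_ix$ for $i\geq2$. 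The matrix of $T$ is block lower triangular with diagonal blocks $-I,I,\dots,I$, so $|\det T|=1$. Applying $T$ to all arguments of $W_{p,K}(\bj)$ therefore exchanges the roles of $j_0$ and $j_1$ and leaves the value unchanged, which gives full symmetry.

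\emph{Reduction formula.} Write $\RR^{np}=E\oplus F$ with $E=(\RR^n)^{p-1}$ (the first $p-1$ factors) and $F=\iota_p(\RR^n)$. Let $P$ be the projection onto $E$ along $F$. When $j_p=0$, the body $\iota_pK\subset F$ appears exactly $n=\dim F$ times. I would invoke the classical identity (e.g.\ \cite[(5.83)]{Schneider2014})
$$V\big(K_1,\dots,K_{N-n},L[n]\big)={N\choose n}^{-1}|L|_F\,V_E(PK_1,\dots,PK_{N-n}),\qquad L\subset F,$$
with $N=np$. If one prefers to prove it, use Fubini: for $A$ any positive combination of the $K_i$, the coefficient of $s^n$ in $|A+sL|$ is $|L|\,|PA|$. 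Then compare coefficients in the multilinear expansions of both sides.

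It remains to identify the projections and multiplicities. We have $P(-\Delta_pK)=-\Delta_{p-1}K$, $P(\iota_iK)=\iota_iK$ for $i<p$, and $|\iota_pK|_F=|K|$. The remaining multiplicities $n-j_0,\dots,n-j_{p-1}$ sum to $n(p-1)=\dim E$, so the right-hand side is exactly ${np\choose n}^{-1}|K|\,W_{p-1,K}(j_0,\dots,j_{p-1})$.

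The only delicate point is this decomposition identity, specifically matching the normalization constant ${N\choose n}^{-1}$. That is the step to double-check, via the polynomial coefficient comparison.
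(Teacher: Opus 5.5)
Your proof is correct. The shear $T(x_1,\dots,x_p)=(-x_1,x_2-x_1,\dots,x_p-x_1)$ swaps $-\Delta_pK$ and $\iota_1K$, fixes the other $\iota_iK$, and has $|\det T|=1$; together with the factor permutations, the resulting transpositions generate all permutations of $(j_0,\dots,j_p)$, so $W_{p,K}$ is symmetric. The normalization ${np\choose n}^{-1}$ in the splitting formula is also right: comparing coefficients of $s^n$ in $|A+sL|$ gives ${N\choose n}V(A[N-n],L[n])=|L|\,|PA|$, and since $E\perp F$ here, $P$ is just the orthogonal projection.

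The paper gives no proof of its own and imports the lemma from \cite{K25}. Your argument is a natural self-contained proof, using the same tools as the paper's own computations (the shear $F$ in Proposition \ref{P7} and the splitting formula behind $V(K^p[np-n],-\Delta_pK[n])$).
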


When studying Conjecture \ref{KK2}, we may thus assume $j_i>0$ for $i=0,\dots,p$ and, in particular, $p\leq n-1$. The case $p=n-1$ of the inequality \eqref{KOTRB} is due to Janson which was mentioned without proof by Schneider \cite[p. 537]{Schneider00}; we include an argument here for completeness.

\begin{prop}[Janson's inequality]\label{janson}
For any $K\in\mathcal{K}(\mathbb{R}^n)$ we have 
\begin{equation}
\label{eq:janson}
    W_{n-1, K}(1,\dots,1)\leq\frac{(n!)^n}{(n^2-n)!}|K|^{n-1}.
\end{equation}
\end{prop}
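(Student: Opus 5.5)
We plan to realize the mixed volume $W_{n-1,K}(1,\dots,1)$ as a coefficient of a volume polynomial attached to a linear image of a product body. Then we bound this coefficient by a Rogers--Shephard type Fubini argument. Concretely, for $t_0,\dots,t_{n-1}\geq0$ put $N:=n(n-1)$ and consider the linear map
$$
\Phi\colon(\RR^n)^n\to(\RR^n)^{n-1},\qquad (x_0,\dots,x_{n-1})\mapsto(x_1-x_0,\dots,x_{n-1}-x_0),
$$
whose kernel is the diagonal $\Delta_n(\RR^n)$. Then
$$
\Phi(t_0K\times\cdots\times t_{n-1}K)=-t_0\Delta_{n-1}K+\sum_{i=1}^{n-1}t_i\iota_iK,
$$
so its volume $P(t_0,\dots,t_{n-1})$ is a homogeneous polynomial of degree $N$. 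By multilinearity, the coefficient of $t_0^{n-1}\cdots t_{n-1}^{n-1}$ equals ${N\choose n-1,\dots,n-1}\,W_{n-1,K}(1,\dots,1)$. Thus \eqref{eq:janson} is equivalent to showing that this coefficient is at most $\frac{N!}{((n-1)!)^n}\cdot\frac{(n!)^n}{N!}|K|^{n-1}=n^n|K|^{n-1}$.

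The first step is a Fubini computation. The fibre of $\Phi$ restricted to $\prod t_iK$ over $\by=(y_1,\dots,y_{n-1})$ is, up to the fixed Jacobian of the splitting $(\RR^n)^n=\Delta_n(\RR^n)\oplus\ker(\cdot)^\perp$, the set $t_0K\cap\bigcap_{i\ge1}(t_iK-y_i)$. Hence
$$
\prod_{i=0}^{n-1}t_i^n\,|K|^n=c\int_{\RR^N}\Big|t_0K\cap\bigcap_{i=1}^{n-1}(t_iK-y_i)\Big|\d\by .
$$
The key point is that, because we only want the multilinear coefficient in which each $t_i$ appears to power exactly $n-1$, we may instead differentiate this identity. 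Applying $\partial_{t_0}\cdots\partial_{t_{n-1}}$ at a suitable scale turns each factor $t_iK$ into a first-variation, that is, surface-area-measure, term. The coefficient we want should then become an integral over $(S^{n-1})^n$ of a determinant $|\det(u_0,\dots,u_{n-1})|$ against $dS_K(u_0)\cdots dS_K(u_{n-1})$, weighted by a support-function factor coming from the diagonal. This determinant form is the identity we believe Janson used, and it manifestly exhibits each of the $n$ factors contributing exactly one normal direction.

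The final step is to bound this integral by $n^n|K|^{n-1}$. We will try first to rewrite the determinantal integral, via $|K|=\frac1n\int h_K\d S_K$ and the fact that the relevant vectors sum to zero (Minkowski's condition $\int u\d S_K(u)=0$), as $n^n$ times an integral of an intersection volume of the form appearing in the Fubini identity. We would then compare with the simplex, where \eqref{eq:Sncap} gives every intersection explicitly as $y(\bw)+\rho(\bw)S_n$. If that fails, the fallback is to run the Schneider/Rogers--Shephard argument on the whole polynomial. That argument uses concavity of $\by\mapsto|t_0K\cap\bigcap_i(t_iK-y_i)|^{1/n}$ on its support (Brunn--Minkowski, Theorem \ref{BM}), together with the bound for integrals of $1/n$-concave functions against their maximum. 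It yields $P(t_0,\dots,t_{n-1})\le Q(t_0,\dots,t_{n-1})\,|K|^{n-1}$, with $Q$ the polynomial for the simplex. One then needs an additional argument to pass from this inequality of polynomials to the inequality of the single coefficient of $t_0^{n-1}\cdots t_{n-1}^{n-1}$.

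The main obstacle is exactly this extraction of the single multilinear coefficient. An inequality between two polynomials on $\RR_+^n$ does not in general bound an individual coefficient. The proof must therefore exploit the special structure of the case $p=n-1$, $j_0=\dots=j_{n-1}=1$: each body $-\Delta K$ or $\iota_iK$ appears exactly $n-1$ times, which is what makes a determinant formula, and hence a direct bound, available. We expect the determinant representation and its comparison with $n^n|K|^{n-1}$ to be where the real work lies.
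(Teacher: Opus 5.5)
Your reformulation is correct: the coefficient of $t_0^{n-1}\cdots t_{n-1}^{n-1}$ in $P$ is $\binom{N}{n-1,\dots,n-1}W_{n-1,K}(1,\dots,1)$, and the target is $n^n|K|^{n-1}$. However, there is a genuine gap: neither route proves this bound, and both stop exactly at the step that carries the content of \eqref{eq:janson}.

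In the first route the determinant representation is only guessed, and the comparison with $n^n|K|^{n-1}$ is explicitly left open. If you carry out the face decomposition of the projected product $\Phi(t_0K\times\cdots\times t_{n-1}K)$, the weight multiplying $|\det(u_0,\dots,u_{n-1})|$ is the normalized solid angle of $\mathrm{pos}(u_0,\dots,u_{n-1})$, not a support-function factor. For a simplex these cones are the vertex normal cones, which tile $\RR^n$, and one gets exactly $n^n|S_n|^{n-1}$. But bounding this integral for general $K$ is the whole problem, and your plan does not do it.

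The fallback contains an actual error. The Rogers--Shephard argument with the maximum of $\bx\mapsto|t_0K\cap\bigcap_{i\ge1}(t_iK-x_i)|$ only yields
\[
P(t_0,\dots,t_{n-1})\le\binom{n^2}{n}\Big(\prod_l t_l^n\Big)t_{\min}^{-n}|K|^{n-1}.
\]
This coincides with the simplex polynomial only at $t_0=\cdots=t_{n-1}$, which is Schneider's inequality \eqref{eq:Schneider}. Elsewhere it is far from sharp: as $t_0\to0$ it exceeds the simplex value by the factor $\binom{n^2}{n}$. The inequality $P\le Q\,|K|^{n-1}$ on all of $\RR_+^n$ is precisely Conjecture \ref{COn2} for $p=n-1$, which is open. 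And, as you note yourself, even that would not isolate a single coefficient.

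The missing idea is to extract the coefficient from Schneider's inequality at an order $q\ge n-1$ and then send $q\to\infty$, so that the desired mixed volume appears with growing multiplicity. Expand
\[
|D_qK|=\sum_{\bj\in\calJ_q^n}\binom{nq}{n-\bj}W_{q,K}(\bj)\le\binom{nq+n}{n}|K|^q .
\]
All terms are nonnegative, so you may keep only the $\binom{q+1}{n}$ multi-indices $\bj\in\{0,1\}^{q+1}$. By the symmetry of $W_{q,K}$ and Lemma \ref{lem:reduce0}, each of these terms equals $\binom{n(n-1)}{n-1,\dots,n-1}|K|^{q+1-n}W_{n-1,K}(1,\dots,1)$. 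Hence
\[
\binom{q+1}{n}\binom{n(n-1)}{n-1,\dots,n-1}|K|^{q+1-n}W_{n-1,K}(1,\dots,1)\le\binom{nq+n}{n}|K|^{q}.
\]
Since $\binom{nq+n}{n}\big/\binom{q+1}{n}\to n^n$, letting $q\to\infty$ gives (for $|K|>0$; the degenerate case is trivial)
\[
\binom{n(n-1)}{n-1,\dots,n-1}W_{n-1,K}(1,\dots,1)\le n^n|K|^{n-1},
\]
which is \eqref{eq:janson}. This needs neither a determinant formula nor any unbalanced inequality. The only price is that the limit says nothing about equality cases, which the statement does not ask for.
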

\begin{proof}
For any $q\in\NN$, Schneider's inequality \eqref{eq:Schneider} yields
$$
        \sum_{\bj\in\mathcal{J}_q^n}
{nq\choose n-\bj} W_{q, K}(\bj)\leq {nq+n \choose n }|K|^q.
$$
Assume $q\geq n-1$ and consider only the summands corresponding to $\bj\in\calJ_q^n\cap\{0,1\}^{q+1}$, i.e., to the multiindices with entries 0 and 1. By Lemma \ref{lem:reduce0}, for each such term we have
$$
{nq\choose n-\bj}W_{q,K}(1,\dots,1,0,\dots,0)={n(n-1)\choose n-1, \dots, n-1}|K|^{q+1-n}W_{n-1, K}(1,\dots,1).
$$
Using this and restricting the above sum to ${q+1\choose n}$ such summands, we get
$$
 {q+1\choose n}{n(n-1)\choose n-1, \dots, n-1}|K|^{q+1-n}W_{n-1, K}(1,\dots,1)\leq {nq+n \choose n}|K|^q.
$$
Letting $q\to\infty$ easily yields \eqref{eq:janson}.
\end{proof}

Note that the limiting argument in the proof of Proposition \ref{janson} does not allow for analysis of equality cases of \eqref{eq:janson}. In fact, we do not know if simplices are the only non-trivial extremizers in this case.

\begin{proof}[Proof of Theorem \ref{thm:main1}(b)]
If $n=2$ then we may assume $p=1$ and $j_0=j_1=1$ which is the known case $j=1$ of Conjecture \ref{Godbc}.

Let $n=3$. The only two cases to consider correspond to the multiindices $(2,1)$ and $(1,1,1)$ for $p=1$ and $p=2$, respectively. The former is again a known case of Conjecture \ref{Godbc}, the latter is a special case of Proposition \ref{janson}.
\end{proof}

By \cite[Proposition 3.3]{K25}, Theorem \ref{thm:main1}(b) implies that the same is true for Conjecture \ref{KK1}. In what follows, we will establish additional cases of Conjecture \ref{KK1} that in particular include the characterization of equality cases in dimension $n=3$, thus proving Theorem \ref{thm:main1}(a). Namely, we will prove the conjecture for $j\in\{0,1,n-1,n\}$. Observe that, unlike in the case $p=1$, for $p\geq 2$ there is in general no symmetry between the mixed volumes $V(K^p[np-j],-\Delta_pK[j])$ and $V(K^p[np-n+j],-\Delta_pK[n-j])$.

First, Conjecture \ref{KK1} is trivial for $j=0$. Second, assuming $p\geq2$, an argument similar to \cite[Remark 3.5(c)]{K25} yields at once 
$$
V(K^p[np-n], -\Delta_pK[n])={np \choose n}^{-1}|K|\,|D_{p-1}K|,
$$ 
which together with Schneider's inequality \eqref{eq:Schneider} proves the case $j=n$. Third, writing $K^p=\sum_{i=1}^p\iota_iK$, expanding by multilinearity, and using Lemma \ref{lem:reduce0}, gives
$$
V(K^p[np-1], -\Delta_p K)= |K|^{p-1}V(K[n-1], -K).$$
Hence, if $j=1$, Conjecture \ref{KK1} is for general $p\in\NN$ equivalent to the known case $p=1$. Finally, since no closed formula for the mixed volume $V(K^p[np-n+1], -\Delta_pK[n-1])$ is known to us, we will deduce the case $j=n-1$ from the following  general statement which might be of independent interest:

\begin{prop}\label{P7}
Let  $p\in\NN$ and $0\leq j\leq n$. For any  $K,L\in\mathcal{K}(\mathbb{R}^n)$ one has 
$$
        {np \choose j}V(K^p[np-j], -\Delta_p L[j])
= {n \choose j}\int\limits_{D_{p-1}K}V(K_{\bw}[n-j], - L[j])\d \bw, 
$$
 where
  $$K_{\bw}:= K\cap\bigcap_{i=1}^{p-1}(K-w_i), \quad \bw=(w_1, \dots, w_{p-1})\in (\mathbb{R}^n)^{p-1}.$$
\end{prop}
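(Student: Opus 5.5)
Both sides are polynomial in a parameter, so the plan is to compare coefficients. Consider the body $K^p+s(-\Delta_pL)\subset(\RR^n)^p$ for $s\geq0$. By multilinearity,
$$
|K^p-s\Delta_pL|=\sum_{j=0}^n{np\choose j}s^jV(K^p[np-j],-\Delta_pL[j]).
$$
Terms with $j>n$ vanish, because $-\Delta_pL$ lies in an $n$-dimensional subspace. The left-hand side of the proposition is therefore the $s^j$-coefficient of $|K^p-s\Delta_pL|$. The idea is to compute this volume by Fubini, fibring over the last $p-1$ coordinates after a linear change of variables.

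First I apply the volume-preserving shear $T(x_1,\dots,x_p)=(x_p,\,x_1-x_p,\,\dots,\,x_{p-1}-x_p)$ (up to reordering), which has determinant $\pm1$. A point of $K^p-s\Delta_pL$ has the form $x_i=k_i-s\ell$ with $k_i\in K$ and $\ell\in L$. Under $T$ its image is $(u,w_1,\dots,w_{p-1})$ with $u=k_p-s\ell$ and $w_i=k_i-k_p$.

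For fixed $\bw=(w_1,\dots,w_{p-1})$, the admissible $k_p$ satisfy $k_p\in K$ and $k_p+w_i\in K$. Equivalently, $k_p\in K\cap\bigcap_i(K-w_i)=K_{\bw}$. The fibre over $\bw$ is thus exactly $K_{\bw}-sL$, which is nonempty precisely when $\bw\in D_{p-1}K$, using the description of $D_{p-1}K$ from \S\ref{ss:higherDK}. The key point is that this fibre description does not depend on $s$: the set of $\bw$ is fixed and only the fibre changes. I should double-check this, since $\ell$ is shared among all coordinates and so cancels in the differences $w_i=x_i-x_p$.

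Fubini then gives
$$
|K^p-s\Delta_pL|=\int_{D_{p-1}K}|K_{\bw}-sL|\d\bw=\sum_{j=0}^n{n\choose j}s^j\int_{D_{p-1}K}V(K_{\bw}[n-j],-L[j])\d\bw,
$$
where I expanded each fibre volume by Steiner-type multilinearity in $\RR^n$. Comparing the coefficients of $s^j$ in the two polynomial expansions yields the claim.

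The main obstacle is the technical justification. One needs measurability and integrability of $\bw\mapsto V(K_{\bw}[n-j],-L[j])$ and permission to interchange the finite sum with the integral. This follows because each $K_{\bw}$ is a convex body depending upper semicontinuously on $\bw$, and all quantities are bounded on the compact set $D_{p-1}K$. The edge case $p=1$ is the trivial identity, with $D_0K$ a point. Degenerate $K$ causes no issue, since both sides are continuous or vanish consistently.
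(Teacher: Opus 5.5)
Your proposal is correct and is essentially the paper's own proof: the paper uses the same unimodular shear $F(x_1,\dots,x_p)=(x_1-x_p,\dots,x_{p-1}-x_p,x_p)$, identifies the fibre over $\bw$ as $K_{\bw}-tL$ (nonempty exactly when $\bw\in D_{p-1}K$), applies Fubini, expands each fibre volume, and compares the coefficients of $t^j$. Your remarks on the measurability and boundedness of $\bw\mapsto V(K_{\bw}[n-j],-L[j])$ are a harmless extra that the paper leaves implicit.
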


\begin{proof}
   Consider the linear map $F\colon\mathbb{R}^{np}\to\mathbb{R}^{np}$ given by
   $$F(x_1, \dots, x_p) \coloneqq (x_1-x_p, \dots, x_{p-1}-x_p, x_p).$$
   It satisfies $|\det(F)|=1$ and $$F(K^p-t\Delta_pL)=\big\{(\bw, z)\in\mathbb{R}^{np-n}\times\mathbb{R}^n \mid z\in K_{\bw}-tL\big\}$$
   for any $t>0$. Consequently, Fubini's theorem yields
   \begin{align*}
            \sum_{j=0}^n{np \choose j}t^jV(K^p[np-j], -\Delta_pL[j])&=|K^p-t\Delta_pL|\\&=\left|F(K^p-t\Delta_pL)\right|\\&=\int\limits_{\mathbb{R}^{np-n}}|K_{\bw}-tL|\d \bw\\&=\sum_{j=0}^n {n \choose  j}t^j\int\limits_{D_{p-1}K}V(K_{\bw}[n-j], -L[j])\d\bw,
        \end{align*}
         where we used the fact that $K_{\bw}\neq\emptyset$ if and only if $\bw\in D_{p-1}K.$ Comparing the coefficients of $t^j$, the claim follows.
\end{proof}

The integral formula of Proposition \ref{P7} gives for general $j$ a (weaker) upper bound than \eqref{KOTR} which, however, does not depend on $p$:

\begin{prop}\label{P9}
Let $p\in\NN$ and $0\leq j\leq n$. For any $K\in\mathcal{K}(\mathbb{R}^n)$ one has
$$V(K^p[np-j], -\Delta_pK[j])\leq n^{\min\{j,\,n-j\}}|K|^p.$$
\end{prop}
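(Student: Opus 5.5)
The plan is to prove the two bounds $n^j|K|^p$ and $n^{n-j}|K|^p$ separately, both by comparing with the mixed volume where $-\Delta_pK$ is replaced by $+\Delta_pK$. The minimum of the two is then the claim. The tools are the Minkowski asymmetry measure from \S\ref{ss:asymmetry}, monotonicity and translation invariance of mixed volumes, and Proposition \ref{P7}. We may assume $\dim K=n$; otherwise both sides vanish or we argue by continuity.

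\emph{The bound $n^j$.} Since $s(K)\le n$, there is $x\in\RR^n$ with $-K\subset x+nK$. Applying $\Delta_p$ gives $-\Delta_pK\subset \Delta_p(x)+n\Delta_pK$. Mixed volumes are translation invariant, monotone and homogeneous, so
$$V(K^p[np-j],-\Delta_pK[j])\le n^jV(K^p[np-j],\Delta_pK[j]).$$
Because $\Delta_pK\subset K^p$, monotonicity gives $V(K^p[np-j],\Delta_pK[j])\le V(K^p[np])=|K|^p$. This proves the first bound.

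\emph{The bound $n^{n-j}$.} Apply Proposition \ref{P7} twice, once with $L=K$ and once with $L=-K$:
$$\binom{np}{j}V(K^p[np-j],-\Delta_pK[j])=\binom nj\int_{D_{p-1}K}V(K_{\bw}[n-j],-K[j])\d\bw,$$
$$\binom{np}{j}V(K^p[np-j],\Delta_pK[j])=\binom nj\int_{D_{p-1}K}V(K_{\bw}[n-j],K[j])\d\bw.$$
For $\bw\in\inter(D_{p-1}K)$, the body $K_{\bw}$ is full-dimensional. Hence $s(-K_{\bw})=s(K_{\bw})\le n$, and there is $y_{\bw}$ with $K_{\bw}\subset y_{\bw}-nK_{\bw}$. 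Since the boundary of $D_{p-1}K$ is a null set, this suffices for the integral.

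By translation invariance and monotonicity,
$$V(K_{\bw}[n-j],-K[j])\le n^{n-j}V(-K_{\bw}[n-j],-K[j])=n^{n-j}V(K_{\bw}[n-j],K[j]),$$
where the last equality uses invariance under the linear map $-\mathrm{id}$. Integrating this pointwise bound and comparing the two displayed identities gives
$$V(K^p[np-j],-\Delta_pK[j])\le n^{n-j}V(K^p[np-j],\Delta_pK[j]).$$
As in the first bound, the right-hand side is at most $n^{n-j}|K|^p$.

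The only delicate step is justifying the pointwise asymmetry bound for $K_{\bw}$, namely that it is full-dimensional for almost every $\bw$. This holds because interior points of $D_{p-1}K$ correspond to intersections with nonempty interior, so the exceptional set is contained in $\partial D_{p-1}K$, which has measure zero.
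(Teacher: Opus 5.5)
Your proof is correct and follows essentially the same route as the paper. The $n^j$ bound comes from $-K\subset x+nK$ together with $\Delta_pK\subset K^p$, and the $n^{n-j}$ bound comes from Proposition \ref{P7} (with $L=K$ and, implicitly in the paper, $L=-K$) plus the pointwise inclusion $K_{\bw}\subset y_{\bw}-nK_{\bw}$. Your almost-everywhere full-dimensionality argument for $K_{\bw}$ covers a point the paper leaves implicit; it could also be avoided, since every convex body $L$, of any dimension, satisfies $-L\subset x+nL$ for some $x$.
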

\begin{proof}
It suffices to prove the statement for $p\geq2$. We will keep the notation from Proposition \ref{P7}. By Proposition \ref{P7}, we have
\begin{align*}
{np \choose j}V(K^p[np-j], -\Delta_pK[j])&={n \choose j}\int_{D_{p-1}K}V(K_{\bw}[n-j], -K[j])\d \bw \\
&\leq {n \choose j}\int_{D_{p-1}K}V\big(-s(K_{\bw})K_{\bw}[n-j], -K[j]\big)\d \bw \\
&={n \choose j}\int_{D_{p-1}K}s(K_{\bw})^{n-j}V(K_{\bw}[n-j], K[j])\d \bw \\
&\leq n^{n-j}{n \choose j}\int_{D_{p-1}K}V(K_{\bw}[n-j], K[j])\d \bw\\
&= n^{n-j}{np \choose j}V(K^p[np-j], \Delta_pK[j]) \\
&\leq n^{n-j}{np \choose j}V(K^p[np-j], K^p[j]) \\
&= n^{n-j}{np \choose j}|K|^p,
\end{align*}
where for each $\bw\in D_{p-1}K$ we used the existence of $x_{\bw}\in\mathbb{R}^n$ with $K_{\bw}\subset x_{\bw} - s(K_{\bw})K_{\bw}$,
the bound $s(K_{\bw})\leq n$, and the inclusion $\Delta_pK\subset K^p$.

On the other hand, choosing $x\in\mathbb R^n$ such that $-K\subset x+s(K)K$, we have 
$$-\Delta_pK\subset\Delta_p(x)+s(K)\Delta_pK\subset\Delta_p(x)+s(K)K^p.$$
Consequently,
\begin{align*}
V(K^p[np-j],-\Delta_pK[j])\leq s(K)^j V(K^p[np-j],K^p[j])\leq n^j|K|^p.
\end{align*}
\end{proof}

\begin{cor}\label{cor7}
Let $p\in\NN$. For any $K\in\mathcal{K}(\mathbb{R}^n)$ one has
$$V(K^p[np-n+1], -\Delta_pK[n-1])\leq n|K|^p,$$
with equality if and only if $K$ is a simplex, unless $\dim(K)<n$.

\begin{proof}
We may again assume $p\geq2$. By Proposition \ref{P9}, it suffices to show that if $\dim(K)=n$ and $K$ is not a simplex, then the second estimate in the proof of Proposition \ref{P9} (obtained from the bound $s(K_{\bw})\leq n$) is strict. We  again keep the notation from Proposition \ref{P7}.

Since $\inter(K)\neq\emptyset$, we have $0\in \inter(D_{p-1}K)$. Without loss of generality, we may also assume that $0\in \inter(K)$. Then $K_0=K$ is not a simplex, hence $s(K_0)<n$. 

Observe that the mapping $\bw\mapsto K_{\bw}$ is continuous at $0$. Indeed, for any $\varepsilon\in(0, 1)$ it holds that  $(1-\varepsilon)K\subset\inter(K)$. Then for every $\bw$ sufficiently close to $\bw_0$, we have $(1-\varepsilon)K\subset K_{\bw}\subset K$.

Since $K_0=K$ is full-dimensional, it follows that $K_{\bw}$ is full-dimensional for all $\bw$ in some open neighborhood of $0$. Since the map $s$ is continuous on the subset of full-dimensional convex bodies in $\mathcal{K}(\mathbb{R}^n)$, we have $s(K_{\bw})<n$ in some open neighborhood of $0$.
In particular, $s(K_{\bw})<n$ and $V(K_{\bw}, K[n-1])>0$ on a subset of $D_{p-1}K$ of positive $(np-n)$-dimensional measure, which makes the
second inequality in the proof of Proposition \ref{P9} strict and completes the proof.
\end{proof}
\end{cor}

\begin{proof}[Proof of Theorem \ref{thm:main1}(a)]
If $n\in\{2,3\}$, then necessarily $j\in\{0,1,n-1,n\}$. We have seen that Conjecture \ref{KK1} is true in these cases.
\end{proof}

\section{Higher-order Godbersen conjectures for special classes of convex bodies}

In this section we will use an argument due to Godbersen \cite{Godbersen} to show that Conjectures \ref{KK1} and \ref{KK2} hold for convex bodies of constant width if $n\leq6$, in the latter case for $n=6$ possibly without the characterization of equality cases. Along the way, we will also observe that in general dimension the conjectures hold for symmetric convex bodies; note that although this is rather easy to see, it is not as obvious as in the classical case $p=1$.

Recall from \cite{Godbersen} that if $K\in\calK(\RR^n)$ is of constant width, then there exists
$x\in\mathbb{R}^n$ such that
$$-K \subset x + \alpha_n K,$$
where
$$\alpha_n := \frac{n+\sqrt{2n(n+1)}}{n+2}.$$
Observe that in particular $s(K)\leq\alpha_n$. Moreover, this constant is optimal in the sense that there exists $\widetilde K\in\calK(\RR^n)$ of constant width for which $s(\widetilde K)=\alpha_n.$ Godbersen's proof of Conjecture \ref{Godbc} for bodies of constant width is based on following estimate:

\begin{lemma}[{\cite[\S7]{Godbersen}}]
\label{lem:godbersen}
For any $1\leq j\leq\lfloor\frac n2\rfloor$ it holds that
$$(\alpha_n)^{j} < {n \choose j}.$$
\end{lemma}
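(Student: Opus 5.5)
The plan is to reduce the claim to two elementary facts: a uniform upper bound $\alpha_n<1+\sqrt2$ valid for every $n$, and a lower bound for ${n\choose j}$ in terms of the central binomial coefficient ${2j\choose j}$. The case $j=1$ and the case $2\le j\le\lfloor\frac n2\rfloor$ are handled separately.

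\emph{Uniform bound on $\alpha_n$.} Since $2n(n+1)=2\big(n+\tfrac12\big)^2-\tfrac12<2\big(n+\tfrac12\big)^2$, we have $\sqrt{2n(n+1)}<\sqrt2\,n+\tfrac{\sqrt2}{2}$. Hence
$$\alpha_n<\frac{(1+\sqrt2)\,n+\frac{\sqrt2}{2}}{n+2}<\frac{(1+\sqrt2)(n+2)}{n+2}=1+\sqrt2,$$
where the second inequality uses $\tfrac{\sqrt2}{2}<2(1+\sqrt2)$.

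\emph{Case $j=1$.} We need $\alpha_n<n$. For $n\geq3$ this follows from $\alpha_n<1+\sqrt2<3\le n$. For $n=2$ it is a direct check: $\alpha_2=\frac{2+\sqrt{12}}{4}<2$.

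\emph{Case $2\le j\le\lfloor\frac n2\rfloor$.} Here $n\geq 2j$, and ${m\choose j}$ is nondecreasing in $m$ for $m\ge j$, so ${n\choose j}\geq{2j\choose j}$. It therefore suffices to show $(1+\sqrt2)^j<{2j\choose j}$ for all $j\geq2$, which we do by induction.
\begin{itemize}
\item For the base case $j=2$: $(1+\sqrt2)^2=3+2\sqrt2<6={4\choose 2}$, since $2\sqrt2<3$.
\item For the inductive step: ${2j+2\choose j+1}\big/{2j\choose j}=\frac{2(2j+1)}{j+1}\geq\frac{10}{3}>1+\sqrt2$ for $j\geq2$. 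So each step multiplies the right-hand side by more than the left-hand side.
\end{itemize}
Combining with $\alpha_n<1+\sqrt2$ gives $\alpha_n^j<(1+\sqrt2)^j<{2j\choose j}\le{n\choose j}$, with strict inequality throughout.

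The only delicate point is that the crude estimate $\alpha_n<1+\sqrt2$ is too weak for $j=1$ when $n=2$. Indeed ${2\choose1}=2<1+\sqrt2$, which is why that case is checked directly, and why the central-binomial argument starts at $j=2$. Beyond that, every step is a routine numerical verification.
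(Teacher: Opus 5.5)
Your proof is correct and complete. The estimate $\sqrt{2n(n+1)}<\sqrt2\,(n+\tfrac12)$ gives the dimension-free bound $\alpha_n<1+\sqrt2$. The case $j=1$ follows from $1+\sqrt2<3\le n$ and the direct check $\alpha_2=\frac{1+\sqrt3}{2}<2$. For $2\le j\le\lfloor\frac n2\rfloor$, the chain $\alpha_n^j<(1+\sqrt2)^j<{2j\choose j}\le{n\choose j}$ is justified by three facts: $3+2\sqrt2<6$, the ratio $\frac{2(2j+1)}{j+1}\ge\frac{10}{3}>1+\sqrt2$, and monotonicity of ${m\choose j}$ in $m\ge j$.

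The paper contains no argument for this lemma and simply cites Godbersen's thesis. Your route is therefore a self-contained replacement for the citation rather than a variant of a proof in the paper. Its advantage is that it shows exactly where the hypotheses enter.

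For $j\ge2$ the specific value of $\alpha_n$ is irrelevant. Any base below ${4\choose 2}^{1/2}=\sqrt6$ works, because ${2j\choose j}^{1/j}$ is increasing in $j$: the successive ratios $\frac{2(2j+1)}{j+1}$ increase. Combined with the paper's estimate $V(-\Delta_pK[j],K^p[np-j])\le s(K)^j|K|^p$, your argument yields strict inequality in \eqref{KOTR} for $2\le j\le\frac n2$ whenever $s(K)<\sqrt6$, not only for bodies of constant width.

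It also makes clear why the range $j\le\frac n2$ cannot be dropped. Your comparison ${n\choose j}\ge{2j\choose j}$ needs $n\ge 2j$, and for larger $j$ the inequality genuinely fails once $n\ge7$: the paper observes $\alpha_n>{n\choose n-2}^{1/(n-2)}$ there. This is why the paper treats $j>\frac n2$ only for $n\le6$, by explicit numerics.
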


\begin{proof}[Proof of Theorem \ref{thm:main1}(c)]
Observe that for any $K\in\calK(\RR^n)$ we have
$$V\big(-\Delta_pK[j],K^p[np-j]\big)\leq s(K)^j|K|^p.$$
Hence,  the statement for $j\leq\lfloor\frac n2\rfloor$ follows at once from Lemma \ref{lem:godbersen} and the fact that $s(K)\leq\alpha_n$ for bodies of constant width. Proceeding to the case $n\leq 6$, by the results of Section \ref{s:lowdim} we may assume $2\leq j\leq n-2$ and $n\in\{4,5,6\}$. A short calculation shows that the desired inequalities
$$
s(K)^j< {n \choose j}, \qquad j=2, \dots, n-2,
$$
hold if and only if
$$
s(K)<\beta_n\coloneqq {n \choose n-2}^{\frac{1}{n-2}}.
$$
Using $s(K)\leq\alpha_n$, it suffices to verify that $\alpha_n<\beta_n$ for $n\in\{4, 5, 6\}$. To this end, we compute
\begin{align*}
  \alpha_4=\frac{2+\sqrt{10}}{3}\approx1.721&<2.449\approx\sqrt{6}=\beta_4 \\ \alpha_5=\frac{5+2\sqrt{15}}{7}\approx1.821&<2.154\approx10^{\frac{1}{3}}=\beta_5 \\ \alpha_6=\frac{3+\sqrt{21}}{4}\approx1.896&<1.968\approx15^{\frac{1}{4}}=\beta_6.
\end{align*}
\end{proof}

Observe that in higher dimensions this method fails as
$$
\alpha_n\geq\alpha_7=\frac{7+4\sqrt{7}}{9}\approx1.953>1.838\approx21^{\frac{1}{5}}=\beta_7\geq \beta_n, \qquad n\geq 7.
$$
We will now apply an analogous argument to the finer coefficients in Conjecture~\ref{KK2}.

\begin{proof}[Proof of Theorem \ref{thm:main1}(d)]
 Let $p \in \mathbb{N}$ and consider a full-dimensional convex body $K \in \mathcal{K}(\mathbb{R}^n)$. Since $-K\subset x+s(K)K$ for some $x\in\RR^n$, $-\Delta_pK$ is similarly contained in a translate of $s(K)K^p$. Consequently, for any $\bj=(j_0,\dots,j_p)\in\mathcal{J}^n_p$ we obtain
   \begin{align*}
           \frac{W_{p,K}(\bj)}{s(K)^{n-j_0}}&\leq V(K^p[n-j_0], \iota_1K[n-j_1], \dots, \iota_pK[n-j_p])\\
           &= \sum_{(k_1,\dots,k_p)\in\mathcal{J}^{n-j_0}_{p-1}}{n-j_0 \choose k_1, \dots, k_p}V(\iota_1K[n-j_1+k_1], \dots, \iota_pK[n-j_p+k_p])\\
        &= {n-j_0 \choose j_1, \dots, j_p}V(\iota_1K[n], \dots, \iota_pK[n])\\&= \frac{(n!)^p}{(np)!}{n-j_0 \choose j_1, \dots, j_p}|K|^p.
       \end{align*}
        Using the symmetry of $W_{p,K}$, we conclude
        \begin{align}
        \label{ineq}
           W_{p, K}(\bj)\leq \frac{(n!)^{p}}{(np)!}M_{s(K)}(\bj){n \choose \bj}|K|^p,
       \end{align}
       where for $\bj=(j_0,\dots,j_p)\in\mathcal{J}^n_p$ and $t\in[1, n]$ we set
   $$M_{t}(\bj):= \min_{0\leq l\leq  p}{n \choose j_l}^{-1}t^{n-j_l}.$$
Observe that, for fixed $\bj$, $M_{t}(\bj)$ is increasing in $t$.

Assume $K$ is of constant width. In this case $s(K)\leq\alpha_n$ and hence Conjecture \ref{KK2} holds for $\bj\in\calJ_p^n$ if $M_{\alpha_n}(\bj)<1$. We only need to consider the cases not covered by Section \ref{s:lowdim}.

If $n=3$, the only remaining case is $p=2$ and $\bj=(1, 1, 1)$. In this situation, we have
$$M_{\alpha_3}(1,1,1)=\frac{1}{3}\alpha_3^2=\frac{33+12\sqrt{6}}{75}\approx0.832<1.$$

To prove \eqref{KOTRB} for $n\in\{4,5,6\}$, it remains to consider $\bj\in\calJ_p^n$ with $1\leq j_0,\dots,j_p\leq n-2$. Then, using the symmetry of $W_{p,K}$, we may assume that $j_0\geq 2$ and hence the computations from the proof of Theorem \ref{thm:main1}(c) imply
$$
M_{\alpha_n}(\bj)\leq {n \choose j_0}^{-1}\alpha_n^{n-j_0}\leq{n \choose 2}^{-1}\alpha_n^{n-2}<1.
$$
\end{proof}

Note that this method again fails for $n\geq 7$, since in this case we have 
$$M_{\alpha_n}(2,1,\dots,1)={n \choose 2}^{-1}\alpha_n^{n-2}>1.$$
We conclude the section by observing that Conjecture \ref{KK2} holds for symmetric convex bodies.
    \begin{proof}[Proof of Theorem \ref{thm:main1}(e)]
    Since a symmetric convex body $K\in\calK(\RR^n)$ with $\dim K=n$ satisfies $s(K)=1$, one has $M_{s(K)}(j_0,\dots,j_p)<1$ and hence the claim follows at once from \eqref{ineq}.
    \end{proof}

\section{Stability of the higher-order Godbersen conjectures under products}

In this section we will show that Conjectures \ref{KK1} and \ref{KK2} are stable under the operation of taking Cartesian products. More precisely, we will prove that if $K_i\in\calK(\RR^{n_i})$, $i=1,2$, satisfy Conjecture \ref{KK2} for $p\in\NN$, then $K_1\times K_2\in\calK(\RR^{n_1+n_2})$ satisfies  Conjecture \ref{KK2} for this value of $p$. The proof of the corresponding statement for Conjecture \ref{KK1} is analogous (and simpler) and so we omit it.

Throughout this section, we will denote $n:=n_1+n_2$ and, for simplicity, we will interchangeably regard the space $\RR^{pn}$ both as $(\RR^{n_1}\times\RR^{n_2})^p$ and $(\RR^{n_1})^p\times (\RR^{n_2})^p$. Moreover, for any $\bj\in\calJ_p^n$ we will denote
$$
\calI(\bm{j}):=\left\{(\bj^1,\bj^2)\in\mathcal{J}^{n_1}_p\times\mathcal{J}^{n_2}_p \mid  \bj^1+\bj^2=\bj\right\}.$$

We will first show that the higher-order difference body and its mixed-volume coefficients behave naturally under products.
\begin{lemma}\label{L5}
    Let $p\in\NN$. For any $K_1\in\mathcal{K}(\mathbb{R}^{n_1})$ and $K_2\in\mathcal{K}(\mathbb{R}^{n_2})$ one has
    \begin{align}
    \label{eq:Dproduct}
    D_p(K_1\times K_2)=D_pK_1\times D_pK_2.
    \end{align}
    Moreover, for any $\bj\in\mathcal{J}^n_p$, one has
     \begin{align}
     \label{eq:Wproduct}
            {np \choose n-\bj}\,W_{p, K_1\times K_2}(\bj)
               =\sum\limits_{(\bm{j}^1,\,\bm{j}^2)\in\mathcal{I}(\bm{j})}{n_1p \choose n_1-\bj^1}{n_2p \choose n_2-\bj^2}\,
               W_{p,K_1}(\bm{j}^1)W_{p,K_2}(\bm{j}^2).
    \end{align} 
\end{lemma}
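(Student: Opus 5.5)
For \eqref{eq:Dproduct} I would argue directly from the definition $D_pK=-\Delta_pK+K^p$, using the identification $\RR^{pn}=(\RR^{n_1})^p\times(\RR^{n_2})^p$. Under this identification, $\Delta_p(K_1\times K_2)=\Delta_pK_1\times\Delta_pK_2$ and $(K_1\times K_2)^p=K_1^p\times K_2^p$. Minkowski addition of products is performed coordinatewise, i.e. $(A_1\times A_2)+(B_1\times B_2)=(A_1+B_1)\times(A_2+B_2)$. Hence $D_p(K_1\times K_2)=(-\Delta_pK_1+K_1^p)\times(-\Delta_pK_2+K_2^p)$, which is the claim. Alternatively one can use the intersection description: $(K_1\times K_2)\cap\bigcap_i(K_1\times K_2-w_i)$ is the product of the corresponding intersections for $K_1$ and $K_2$, and a product is nonempty iff both factors are.

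For \eqref{eq:Wproduct} I would compare polynomial expansions of a volume with parameters. For $t_0,\dots,t_p\geq0$ set $L_K(\bt):=-t_0\Delta_pK+\sum_{l=1}^pt_l\iota_lK$. By the same coordinatewise argument, $L_{K_1\times K_2}(\bt)=L_{K_1}(\bt)\times L_{K_2}(\bt)$ in $(\RR^{n_1})^p\times(\RR^{n_2})^p$. Indeed, $\iota_l(K_1\times K_2)=\iota_lK_1\times\iota_lK_2$ and scaling commutes with products. Therefore
$$|L_{K_1\times K_2}(\bt)|=|L_{K_1}(\bt)|\,|L_{K_2}(\bt)|.$$
Next, expand each side by multilinearity of mixed volume in $\RR^{pm}$ for $m\in\{n,n_1,n_2\}$. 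The general term of $|L_K(\bt)|$ with $K\subset\RR^m$ is
$$\binom{mp}{a_0,\dots,a_p}t_0^{a_0}\cdots t_p^{a_p}V\big(-\Delta_pK[a_0],\iota_1K[a_1],\dots,\iota_pK[a_p]\big),$$
summed over $a_0+\dots+a_p=mp$.

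The key observation is that only $a_l\leq m$ contributes. This holds because $\Delta_pK$ and each $\iota_lK$ lie in $m$-dimensional subspaces, so a mixed volume in which any of them appears more than $m$ times vanishes (by the segment criterion for positivity). Writing $a_l=m-j_l$, the condition $\sum a_l=mp$ becomes exactly $\bj\in\calJ_p^m$. So
$$|L_K(\bt)|=\sum_{\bj\in\calJ_p^m}\binom{mp}{m-\bj}\bt^{m-\bj}W_{p,K}(\bj),$$
with $\bt^{m-\bj}=\prod_l t_l^{m-j_l}$.

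Multiplying the expansions for $K_1$ and $K_2$ gives $\prod_l t_l^{n_1-j^1_l}t_l^{n_2-j^2_l}=\bt^{n-(\bj^1+\bj^2)}$. Comparing coefficients of the monomial $\bt^{n-\bj}$ on both sides then yields \eqref{eq:Wproduct}, with the sum running over $\calI(\bj)$. The coefficient comparison is legitimate because both sides are polynomials in $\bt$ that agree on $\RR_+^{p+1}$.

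The main point requiring care is the vanishing step $a_l\leq m$, together with checking that the multinomial normalisations match the definition of ${np\choose n-\bj}$. Everything else is bookkeeping.
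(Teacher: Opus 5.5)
Your proposal is correct and takes essentially the same approach as the paper: write $-t_0\Delta_p(K_1\times K_2)+\sum_{l} t_l\iota_l(K_1\times K_2)$ as the product of the corresponding bodies for $K_1$ and $K_2$, set the parameters to $1$ to get \eqref{eq:Dproduct}, then take volumes and compare coefficients. The only difference is that you spell out the polarization step (the vanishing of mixed volumes with multiplicity exceeding $m$, and the match with ${mp\choose m-\bj}$), which the paper leaves implicit; the paper also fixes the coefficient of $-\Delta_pK$ to $1$, which is equivalent to your parametrization by homogeneity.
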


    \begin{proof}
    Denote $K:=K_1\times K_2$. With the above identification, we have $\Delta_pK=\Delta_pK_1\times\Delta_pK_2$ and $\iota_iK=\iota_iK_1\times\iota_iK_2$, $i=1,\dots,p$. Hence for any $\mu_1, \dots, \mu_p>0$, we get
            \begin{align*}
                -\Delta_pK+\sum_{l=1}^p\mu_l\iota_lK=\left(-\Delta_pK_1+\sum_{l=1}^p\mu_l\iota_lK_1\right)\times\left(-\Delta_pK_2+\sum_{l=1}^p\mu_l\iota_lK_2\right).
            \end{align*}
 Putting $\mu_1=\cdots=\mu_p=1$ proves \eqref{eq:Dproduct}. On the other hand, taking the volume and polarizing easily yields \eqref{eq:Wproduct}.
    \end{proof}

The following observation will later be used to estimate the sum in \eqref{eq:Wproduct}.

\begin{lemma}\label{L6}
    Let $p\in\NN$. For any $\bm{j}\in\mathcal{J}_p^n$ satisfying $j_0, \dots, j_p< n$ we have
    $$
    \sum\limits_{(\bm{j}^1,\,\bm{j}^2)\in\mathcal{I}(\bm{j})}\prod_{l=0}^p{n_1 \choose j_l^1}\prod_{l=0}^p{n_2 \choose j_l^2}<\prod_{l=0}^p{n \choose j_l}.$$
    \begin{proof}
    Using the usual convention ${a\choose b}:=0$ for $b>a$, we have\begin{align*}
            \prod_{l=0}^p{n \choose j_l}&=\prod_{l=0}^p\sum_{j^1_l+j^2_l=j_l}{n_1 \choose j^1_l}{n_2 \choose j^2_l}\\
            &=\sum_{\substack{\bm{j}^1,\,\bm{j}^2\in\mathbb{N}_0^{p+1}\\
\bm{j}^1+\bm{j}^2=\bm{j}}}
\prod_{l=0}^{p}{n_1\choose j_l^1}
\prod_{k=0}^{p}{n_2\choose j_k^2}\\
&\geq\sum\limits_{(\bm{j}^1,\,\bm{j}^2)\in\mathcal{I}(\bm{j})}\prod_{l=0}^p{n_1 \choose j_l^1}\prod_{k=0}^p{n_2 \choose j_k^2}.
        \end{align*}
We will prove that the last inequality is in fact strict by showing that the penultimate sum contains at least one positive summand that is omitted in the last sum. For each $l\in\{0, \dots, p\}$, the values of $j_l^1$ for which the corresponding summand is positive are exactly the integers from $$\mathbb{N}_0\cap \left[\max\{0, j_l-n_2\}, \min\{n_1, j_l\}\right].$$ Since $j_0, \dots, j_p< n$, there exists an index $j_{l_0}$ with $1\leq j_{l_0}\leq n-1$ and hence the number of such integers is at least two in this case. Consequently, there exist two distinct pairs of non-trivial summands corresponding to $(\bj^1,\bj^2)$ and $(\tilde\bj^1,\tilde\bj^2)$, respectively, where $\bj^1$ differ from $\tilde\bj^1$ only in the $l_0$-th coordinate (and so does $\bj^2$ from $\tilde\bj^2$). This means that at least one of the tuples $\bj^1$ and $\tilde\bj^1$ is not an element of $\mathcal{J}_p^{n_1}$.
    \end{proof}
\end{lemma}

We can now proceed to the proof of the stability of Conjecture \ref{KK2} under Cartesian products.

    \begin{proof}[Proof of Theorem \ref{thm:main1}(f)]
   Let $p\in\NN$ and assume that $K_1\in\calK(\RR^{n_1})$ and $K_2\in\calK(\RR^{n_2})$ satisfy Conjecture \ref{KK2} for such $p$, that is, for all $\bj^1\in\calJ_p^{n_1}$ and $\bj^2\in\calJ_p^{n_2}$, respectively.  Let $\bm{j}\in\mathcal{J}^n_p$. We may assume  $j_0, \dots, j_p<n$ and also that $\dim(K_1)=n_1$ and $\dim(K_2)=n_2$. Then Lemma \ref{L5}, the assumption on $K_1$ and $K_2$, and Lemma \ref{L6} yield
         \begin{align*}
               {np \choose n-\bj}W_{p,\, K}(\bm{j})&=\sum\limits_{(\bm{j}^1,\,\bm{j}^2)\in\mathcal{I}(\bm{j})}{n_1p \choose n_1-\bj^1}{n_2p \choose n_2-\bj^2}\,
               W_{p,K_1}(\bm{j}^1)W_{p,K_2}(\bm{j}^2)\\
               &\leq \sum\limits_{(\bm{j}^1,\,\bm{j}^2)\in\mathcal{I}(\bm{j})}\frac{(n_1!)^p}{(n_1p)!}{n_1p \choose n_1-\bj^1}{n_1 \choose \bj^1}\frac{(n_2!)^p}{(n_2p)!}{n_2p \choose n_2-\bj^2}{n_2 \choose \bj^2}|K_1|^{p}|K_2|^{p}\\
               &=|K|^{p}\sum\limits_{(\bm{j}^1,\,\bm{j}^2)\in\mathcal{I}(\bm{j})}\prod_{l=0}^p{n_1 \choose j_l^1}\prod_{l=0}^p{n_2 \choose j_l^2}\\
               &<|K|^{p}\prod_{l=0}^p{n \choose j_l}\\
               &=\frac{(n!)^p}{(np)!}{np \choose n-\bj}{n \choose \bj}|K|^{p}.
            \end{align*}
            Since $K_1\times K_2$ is not a simplex, it follows that it satisfies Conjecture \ref{KK2} for $p$. The corresponding statement for Conjecture \ref{KK1} is proven analogously.
    \end{proof}

\section{Higher-order difference body and weighted inequalities for $-\Delta_p K$ and $K^p$}

In this section we deal with two consequences of Conjecture \ref{KK1}, namely, sharp inequalities for two different weighted sums of the mixed volumes of $-\Delta_p K$ and $K^p$, see Theorems \ref{T4} and \ref{T5}. The former is proven here, the latter will be deduced in the next section as a consequence of a more general statement. We also consider here a higher-order generalization of the unbalanced difference body and show that the corresponding weighted Schneider--Rogers--Shephard type inequality (Conjecture \ref{COn}), which is weaker than Conjecture \ref{KK1}, would still imply both Theorems \ref{T4} and \ref{T5}.

The following notation will be used throughout the entire section: For $p\in\mathbb{N}$, $t\in[0, 1]$, and $K\in\mathcal{K}(\mathbb{R}^n)$ we define
\begin{align}
\label{eq:defCptK}
C_{p, t, K}\coloneqq \big(\{0\}\times (1-t)K^p\big) \vee (\{1\}\times -t\Delta_pK)\in\calK(\mathbb{R}\times\mathbb{R}^{np}).
\end{align}
Equivalently, one has
    $$C_{p, t, K}=\left\{\big(s, (1-s)(1-t)y-st\Delta_p(x)\big) \mid s\in[0, 1],\, x\in K, \, y\in K^p\right\}.$$
We will also repeatedly use the following well-known integral formula for $k,l\in\NN_0$:
\begin{align}
\label{eq:intcombinat1}
\int_0^1s^k(1-s)^l\d s=\frac{k!l!}{(k+l+1)!}.
\end{align}

\subsection{Weighted inequalities I}

In order to prove Theorem \ref{T4}, we will need two lemmas.

\begin{lemma}
\label{lem:CptK}
Let $p\in\mathbb{N}$. For any $K\in\mathcal{K}(\mathbb{R}^n)$ and  $t\in[0, 1]$ we have
     \begin{align}
     \label{eq:CptK}
     |C_{p, t, K}|= \frac{1}{np+1}\sum_{j=0}^n(1-t)^{np-j}t^jV(K^p[np-j], -\Delta_pK[j]).
     \end{align}
\end{lemma}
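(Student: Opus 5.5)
We compute the volume of $C_{p,t,K}$ by slicing it along the first coordinate $s\in[0,1]$ and integrating with Fubini's theorem. The plan has three steps: identify the slices, expand their volumes by multilinearity, and integrate the resulting polynomial in $s$.

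First we identify the slices. From the parametric description following \eqref{eq:defCptK}, the section of $C_{p,t,K}$ at height $s$ is
$$
C_{p,t,K}\cap(\{s\}\times\RR^{np})=\{s\}\times\big((1-s)(1-t)K^p+s t(-\Delta_pK)\big).
$$
The inclusion ``$\supseteq$'' is immediate from the parametrization. For ``$\subseteq$'', a point of the join is a convex combination $(1-s)(0,a)+s(1,b)$ with $a\in(1-t)K^p$ and $b\in-t\Delta_pK$. Its first coordinate is exactly $s$, so the slice is precisely the stated Minkowski combination. This uses only the convexity of $K^p$ and $\Delta_pK$, which guarantees that a convex combination of points of the same body stays in it.

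Next we expand the slice volumes. Since both summands are scaled by non-negative factors, multilinearity of mixed volume in $\RR^{np}$ gives
$$
\big|(1-s)(1-t)K^p-st\Delta_pK\big|=\sum_{i=0}^{np}{np\choose i}(1-s)^{np-i}(1-t)^{np-i}s^it^i\,V(K^p[np-i],-\Delta_pK[i]).
$$
Terms with $i>n$ vanish, because $-\Delta_pK$ lies in an $n$-dimensional subspace. By the positivity criterion for mixed volumes, one cannot then choose more than $n$ linearly independent segment directions from copies of it. So the sum runs only over $j=0,\dots,n$.

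Finally we integrate over $s\in[0,1]$ using \eqref{eq:intcombinat1}:
$$
{np\choose j}\int_0^1 s^j(1-s)^{np-j}\d s={np\choose j}\frac{j!(np-j)!}{(np+1)!}=\frac{1}{np+1}.
$$
Combining this with the expansion above yields \eqref{eq:CptK}.

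The only step needing care is the slice identification. It is routine, but one must make sure the join is exactly the set of such segments, with no extra points coming from longer convex combinations. This holds because any convex combination of points of $\{0\}\times(1-t)K^p$ and $\{1\}\times(-t\Delta_pK)$ can be regrouped into one point of each body, again by convexity of each body.
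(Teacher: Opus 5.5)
Your proposal is correct and follows the paper's proof: slice $C_{p,t,K}$ at height $s$ to get $|(1-s)(1-t)K^p-st\Delta_pK|$, expand by multilinearity, and integrate each term with \eqref{eq:intcombinat1}. The extra details you supply are sound and only make explicit what the paper leaves implicit: that the join's slices are exactly these Minkowski combinations, and that terms with more than $n$ copies of $-\Delta_pK$ vanish.
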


\begin{proof}
   We have
        \begin{align*}
              |C_{p, t, K}|&=\int\limits_{0}^1\left|(1-s)(1-t)K^p-st\Delta_pK\right|\d s\\
              &=\sum_{j=0}^n{np \choose j}(1-t)^{np-j}t^jV(K^p[np-j], -\Delta_pK[j])\int\limits_{0}^1(1-s)^{np-j}s^j\d s\\
              &=\frac{1}{np+1}\sum_{j=0}^n(1-t)^{np-j}t^jV(K^p[np-j], -\Delta_pK[j]).
        \end{align*}
\end{proof}

\begin{lemma}\label{L3}
     Let $p\in\mathbb{N}$. For any $K\in\mathcal{K}(\mathbb{R}^n)$ and  $t\in[0, 1]$ we have
     \begin{align}
         \label{eq:L3}
         |C_{p, t, K}|\leq \frac{(1-t)^{n(p-1)}}{np+1}|K|^p.
         \end{align}
     \begin{proof}
         Let $t\in(0, 1)$. Set 
         \begin{align*}
                 T_{p, t, K}\coloneqq \{(0, 0, y) : y\in (1-t)K^p\}\vee\big\{\big(1, x, -\Delta_p(x)\big) : x\in tK\big\}\in\mathcal{K}(\mathbb{R}\times\mathbb{R}^n\times\mathbb{R}^{np}).
             \end{align*}
                   Since
         \begin{align*}
                 T_{p,t, K}&=\big\{\big(s, sx, -s\Delta_p(x)+(1-s)y\big) : x\in tK,\, y\in (1-t)K^p,\, s\in [0, 1] \big\}\\&=\big\{(s, w, z) : w\in stK,\, z+\Delta_p(w)\in (1-s)(1-t)K^p,\, s\in [0, 1] \big\},
         \end{align*}
we have \begin{align}\label{Eq70}
        |T_{p, t, K}|=|tK|\cdot|(1-t)K^p|\int\limits_{0}^1(1-s)^{np}s^n\d s=\frac{np+1}{{np+n+1 \choose n}}t^n(1-t)^{np}|K|^{p+1}.
    \end{align}
Next, we consider the section of $T_{p, t, K}$ by the affine space $$E_{t}\coloneqq \{(1-t, v, 0) : v\in\mathbb{R}^n\}\subset\mathbb{R}\times\mathbb{R}^n\times\mathbb{R}^{np}$$ and the orthogonal projection of $T_{p, t, K}$ onto $E_{t}^{\perp}$. To this end, we obtain
$$T_{p, t, K}\cap E_t=\{(1-t, v, 0) : v\in t(1-t)K\}$$
and
\begin{align*}
        P_{E_{t}^{\perp}}(T_{p, t, K})&=\{(s, 0, y) :  (s, x, y)\in T_{p,t, K} \text{ for some }x\in\RR^n\}\\
        &=\big\{(s, 0, y) : (st\Delta_pK)\cap\big((1-s)(1-t)K^p-y\big)\neq\emptyset\big\}\\
        &=\big\{(s, 0, y) : y\in(1-s)(1-t)K^p-st\Delta_pK\big\}\\
        &=C_{p, t, K}.
    \end{align*}
Applying Lemma \ref{L2} together with \eqref{Eq70}, it follows
$$
        |C_{p, t, K}|\leq {np+n+1 \choose n}\frac{|T_{p, t, K}|}{t^n(1-t)^n|K|}=\frac{(1-t)^{n(p-1)}}{np+1}|K|^p.
        $$
This proves the claim for $t\in(0, 1)$. For $t\in\{0, 1\}$, the inequality follows by continuity.
     \end{proof}
\end{lemma}

    \begin{proof}[Proof of Theorem \ref{T4}]
   
   First, the inequality \eqref{Eq71} follows at once by combining \eqref{eq:CptK} with \eqref{eq:L3}.
   
   Second, assume that $\dim(K)=n$ and that equality holds in \eqref{Eq71} for some $t\in (0, 1)$. Then we have equality in \eqref{eq:L3} and the proof of Lemma \ref{L3} together with Lemma \ref{L2} in turn imply that all non-empty sections of the body $T_{p,t ,K}$ by the affine subspaces of the form $E_t+(0, 0, \ba)$, where $\ba=(a_1, \dots, a_p)\in(\mathbb{R}^n)^p$, are homothetic. Since \begin{align*}
            T_{p, t, K}\cap\big(E_t+(0, 0, \ba)\big)&=\big\{\big(1-t, t(1-t)x, \ba\big) : x\in K,\, t(1-t)\Delta_p(x)\in t(1-t)K^p-\ba \big\}\\&=\left\{(1-t, x, a) : x\in \big(t(1-t)K\big)\cap\bigcap_{l=1}^p\big(t(1-t)K-a_l\big)\right\}\\&=\{1-t\}\times\Big(\big(t(1-t)K\big)\cap\bigcap_{l=1}^p\big(t(1-t)K-a_l\big)\Big)\times\{\ba\},
    \end{align*} it follows that the bodies 
    $$\big(t(1-t)K\big)\cap\bigcap_{l=1}^p\big(t(1-t)K-a_l\big), \quad \ba\in t(1-t)D_pK,$$ are all mutually homothetic. By Corollary \ref{cor:simplex}, $K$ thus has to be a simplex.
    \end{proof}

\subsection{Higher-order unbalanced difference body I}
Recall from the introduction that for any $p\in\NN$ and $t\in[0,1]$ we define the higher-order unbalanced difference body of $K\in\calK(\RR^n)$ as $D_p^tK= (1-t)K^p-t\Delta_pK$
and observe that
    \begin{align*}
        |D_p^tK|        =(1-t)^{n(p-1)}\sum_{j=0}^n {np \choose j} (1-t)^{n-j}t^j V(K^p[np-j], -\Delta_pK[j])
\end{align*}
In particular, if $K=S_n$ is an $n$-simplex, we have
\begin{equation}
\label{f0}
    |D_p^tS_n|=(1-t)^{n(p-1)}\sum_{j=0}^n {np \choose j}{n \choose j}(1-t)^{n-j}t^j |S_n|^p.
\end{equation}
The inequality \eqref{EQ47ll} of Conjecture \ref{COn} can thus be equivalently stated as
$$
\sum_{j=0}^n{np \choose j}(1-t)^{n-j}t^j V(K^p[np-j], -\Delta_pK[j])\leq \sum_{j=0}^n {np \choose j}{n \choose j}(1-t)^{n-j}t^j |K|^p,\quad t\in[0, 1].
$$       
In this formulation it is obvious that Conjecture \ref{COn} would follow from Conjecture \ref{KK1}. We will now show that even Conjecture \ref{COn} implies Theorems \ref{T4} and \ref{T5}.

\begin{prop}
\label{pro:impliesT4}
    Conjecture \ref{COn} implies Theorem \ref{T4}.
\end{prop}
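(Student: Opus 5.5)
The plan is to derive \eqref{Eq71} by integrating the inequality \eqref{EQ47ll} of Conjecture \ref{COn} over the one-parameter family of sections of the body $C_{p,t,K}$ from \eqref{eq:defCptK}. Lemma \ref{lem:CptK} then converts that integral into the weighted sum appearing in Theorem \ref{T4}. The cases $t\in\{0,1\}$ are trivial (or follow by continuity), so fix $t\in(0,1)$ and assume $\dim(K)=n$; if $\dim(K)<n$ both sides of \eqref{Eq71} vanish.

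First, identify the sections. For $s\in[0,1]$, the slice of $C_{p,t,K}$ at height $s$ is $(1-s)(1-t)K^p-st\Delta_pK$. Put $\lambda(s):=(1-s)(1-t)+st>0$ and $\tau(s):=st/\lambda(s)\in[0,1]$. Then the slice equals $\lambda(s)\,D_p^{\tau(s)}K$, and hence
$$|C_{p,t,K}|=\int_0^1\lambda(s)^{np}\,\big|D_p^{\tau(s)}K\big|\d s.$$
The same formula holds with $K$ replaced by an $n$-simplex $S_n$. Applying \eqref{EQ47ll} pointwise in $s$ gives
$$|C_{p,t,K}|\leq\frac{|K|^p}{|S_n|^p}\,|C_{p,t,S_n}|.$$

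Next, evaluate the right-hand side. I would do this by running the computation of Lemma \ref{lem:CptK} backwards on \eqref{f0}. By \eqref{f0}, the slices of $C_{p,t,S_n}$ have volume
$$\sum_j\binom{np}{j}\binom nj\big((1-s)(1-t)\big)^{np-j}(st)^j|S_n|^p.$$
Integrating with \eqref{eq:intcombinat1} yields
$$|C_{p,t,S_n}|=\frac1{np+1}\sum_{j}\binom nj(1-t)^{np-j}t^j|S_n|^p=\frac{(1-t)^{n(p-1)}}{np+1}|S_n|^p.$$
This is equivalent to $V(S_n^p[np-j],-\Delta_pS_n[j])=\binom nj|S_n|^p$, but I only need the computation above. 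Combining this with Lemma \ref{lem:CptK} for $K$ gives
$$\sum_j(1-t)^{np-j}t^jV(K^p[np-j],-\Delta_pK[j])\leq(1-t)^{n(p-1)}|K|^p.$$
Dividing by $(1-t)^{n(p-1)}$ yields exactly \eqref{Eq71}.

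The step needing care is the equality case. If equality holds in \eqref{Eq71} for some $t\in(0,1)$, then the integrated inequality is an equality. Since the integrand inequality holds for every $s$ and both sides are continuous in $s$, equality in \eqref{EQ47ll} holds for every $\tau(s)$, $s\in[0,1]$. As $s$ runs over $(0,1)$, $\tau(s)$ runs over all of $(0,1)$, since $\tau$ is continuous and monotone from $0$ to $1$. The equality clause of Conjecture \ref{COn} then forces $K$ to be a simplex.
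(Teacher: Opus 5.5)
Your proof is correct and follows essentially the same route as the paper. Both arguments write $|C_{p,t,K}|$ as an integral of rescaled copies $\lambda(s)^{np}|D_p^{\tau(s)}K|$, apply \eqref{EQ47ll} pointwise, evaluate the simplex side via \eqref{f0} and \eqref{eq:intcombinat1}, and conclude with Lemma \ref{lem:CptK}. Your handling of the equality case (the continuous nonpositive integrand must vanish identically, since $\lambda(s)>0$) and of $\dim(K)<n$ makes explicit details the paper leaves implicit.
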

\begin{proof}
Fix $p\in\mathbb{N}$ and $t\in(0,1)$, and for $s\in[0,1]$ set $ \alpha_t(s)\coloneqq (1-s)(1-t)+st $ and $ \mu_t(s)\coloneqq\frac{st}{\alpha_t(s)}$.
Then $\mu_t(s)\in[0,1]$ and we can write
\begin{align}
\label{eq:volCptk}
            |C_{p, t, K}|=\int\limits_{0}^1|(1-s)(1-t)K^p-st\Delta_pK|\d s=\int\limits_{0}^1\alpha_t(s)^{np}\left|D_p^{\mu_t(s)}K\right|\d s.
    \end{align}
    Using \eqref{eq:volCptk}, \eqref{f0}, and assuming that $K$ satisfies \eqref{EQ47ll}, we have
    \begin{align*}
        |C_{p, t, K}|&\leq \frac{|K|^p}{|S_n|^p}\int\limits_{0}^1\alpha_t(s)^{np}\left|D_p^{\mu_t(s)}S_n\right|\d s\\
        &=|K|^p\sum_{j=0}^n {np \choose j}{n \choose j} \int\limits_{0}^1\alpha_t(s)^{np}\big(1-\mu_t(s)\big)^{np-j}\mu_t(s)^j\d s\\
        &=|K|^p\sum_{j=0}^n {np \choose j}{n \choose j} (1-t)^{np-j}t^j\int\limits_{0}^1(1-s)^{np-j}s^j\d s\\
        &=\frac{(1-t)^{n(p-1)}}{np+1}|K|^p.
\end{align*}
Together with \eqref{eq:CptK}, this immediately implies \eqref{Eq71}. If $t\in\{0, 1\}$, the inequality is trivial.

Let $S_n\in\calK(\RR^n)$ be an $n$-simplex and assume that $|K|=|S_n|>0$. If equality holds in \eqref{Eq71} for some $t\in (0, 1)$, then
$$
\left|D_p^{\mu_t(s)}K\right|=\left|D_p^{\mu_t(s)}S_n\right|
$$
for some $s\in(0,1)$ and Conjecture \ref{COn}, if true, implies that $K$ is an $n$-simplex.
\end{proof}

\begin{prop}
\label{pro:impliesT5}
    Conjecture \ref{COn} implies Theorem \ref{T5}.
\end{prop}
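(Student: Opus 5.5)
The plan is to write the left-hand side of \eqref{f00} as an integral, against a positive weight, of volumes $|D_p^{\mu}K|$ with $\mu\in[0,1]$. This is the same strategy as in Proposition \ref{pro:impliesT4}, where $|C_{p,t,K}|$ was written as an integral of $\alpha_t(s)^{np}|D_p^{\mu_t(s)}K|$. Abbreviate $V_j:=V(K^p[np-j],-\Delta_pK[j])$ and recall
$$
|D_p^{\mu}K|=\sum_{j=0}^n{np\choose j}(1-\mu)^{np-j}\mu^jV_j .
$$
So it suffices to show that $c_{n,p,j}(t)/{np\choose j}$ is a moment of the form $\int(1-\mu)^{np-j}\mu^j\,\d\nu_t(\mu)$ for a positive measure $\nu_t$ that does not depend on $j$.

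\textbf{Step 1 (the key computation).} First, use the identity ${np-m\choose j}/{np\choose j}={np-j\choose m}/{np\choose m}$. This rewrites $c_{n,p,j}(t)/{np\choose j}$ as
$$
\sum_m {np-j\choose m}\frac{n!\,(np-m)!}{(np+n-m)!}(1-t)^mt^{np-m}.
$$
Next, by \eqref{eq:intcombinat1},
$$
\frac{n!\,(np-m)!}{(np+n-m)!}=n\int_0^1u^{np-m}(1-u)^{n-1}\d u .
$$
Summing the binomial series in $m$ (the range $0\le m\le np-j$ is exactly where ${np-j\choose m}\neq0$) should give
$$
\frac{c_{n,p,j}(t)}{{np\choose j}}=n\int_0^1(1-u)^{n-1}(tu)^j(1-t+tu)^{np-j}\d u .
$$
Now put $\beta(u):=1-t+2tu$ and $\mu(u):=tu/\beta(u)\in[0,\tfrac12]$. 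Then $1-\mu(u)=(1-t+tu)/\beta(u)$, so the integrand becomes $(1-u)^{n-1}\beta(u)^{np}\mu(u)^j(1-\mu(u))^{np-j}$. Hence
$$
\sum_{j=0}^n c_{n,p,j}(t)V_j=n\int_0^1(1-u)^{n-1}\beta(u)^{np}\,\big|D_p^{\mu(u)}K\big|\d u .
$$
This identity holds for every convex body $K$.

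\textbf{Step 2.} Apply Conjecture \ref{COn} pointwise in $u$, which gives $|D_p^{\mu(u)}K|\le |K|^p|D_p^{\mu(u)}S_n|/|S_n|^p$. Then apply the identity of Step 1 to $S_n$ and read it backwards using \eqref{f0}. The right-hand side becomes $|K|^p\sum_j c_{n,p,j}(t){n\choose j}$. It remains to show $\sum_jc_{n,p,j}(t){n\choose j}=1$, i.e. that simplices give equality in \eqref{f00}. I would verify this directly from the definition of $c_{n,p,j}(t)$. Summing over $j$ first, Vandermonde's identity gives $\sum_j{np-m\choose j}{n\choose j}={np+n-m\choose n}$. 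This collapses the double sum to $\sum_m{np\choose m}(1-t)^mt^{np-m}=1$. I expect this combinatorial identity to be the only real check, and it is routine. The case $\dim K<n$ follows by continuity, or trivially when $|K|=0$, since then both sides are compared with $0$.

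\textbf{Step 3 (equality).} Suppose $\dim K=n$ and equality holds in \eqref{f00} for some $t\in(0,1)$. Then the pointwise inequality from Step 2 must be an equality for almost every $u\in(0,1)$, because the weight $(1-u)^{n-1}\beta(u)^{np}$ is positive there. Both sides are continuous in $u$, so equality holds for some $u$ with $\mu(u)\in(0,1)$. The equality clause of Conjecture \ref{COn} then forces $K$ to be a simplex.
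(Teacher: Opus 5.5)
Your proof is correct and follows essentially the same route as the paper. The paper also writes $\sum_j c_{n,p,j}(t)V_j=n\int_0^1(1-s)^{n-1}\beta_t(s)^{np}\big|D_p^{\nu_t(s)}K\big|\d s$ with $\beta_t(s)=1-t+2st$ and $\nu_t(s)=st/\beta_t(s)$, applies Conjecture \ref{COn} pointwise, and uses $\sum_j\binom{n}{j}c_{n,p,j}(t)=1$ exactly as in Proposition \ref{pro:impliesT4}. Your derivation of the integral representation (via ${np-m\choose j}/{np\choose j}={np-j\choose m}/{np\choose m}$ and the Beta integral) and your Vandermonde check of the normalization supply computations that the paper only asserts.
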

\begin{proof}
Fix $p\in\NN$ and $t\in(0,1)$ and keep the notation from Theorem \ref{T5}. First, observe that
\begin{align*}
        c_{n,  p,  j}(t)=n {np \choose j} \int\limits_{0}^1 (1-s)^{n-1} (1-t+st)^{np-j}(st)^j\d s.
    \end{align*}
    Consequently, we have
    $$
    \sum_{j=0}^n c_{n, \, p, \, j}(t) V(K^p[np-j], -\Delta_pK[j])=n\int\limits_{0}^1(1-s)^{n-1}\beta_{t}(s)^{np}\left|D^{\nu_t(s)}_pK\right|\d s,
    $$
  where for $s\in[0,1]$ we set  $  \beta_t(s)\coloneqq (1-t+st)+st $ and $\nu_t(s)\coloneqq\frac{st}{\beta_t(s)}$. The rest of the proof is completely analogous to the proof of Proposition \ref{pro:impliesT4}, using that $\sum_{j=0}^n {n \choose j} c_{n, p, j}(t)=1$.
\end{proof}

\section{Higher-order difference body and weighted inequalities for $-\Delta_pK$ and $\iota_1K, \dots, \iota_pK$}
\label{s:higher2}

Now we will generalize the results of Artstein-Avidan and Putterman \cite{ArtsteinPutterman25} to the context of Conjecture \ref{KK2}. First, we will prove two consequences of the conjecture, namely, sharp inequalities for two different weighted sums of the mixed volumes of $-\Delta_pK$ and $\iota_1K, \dots, \iota_pK$, and deduce Theorem \ref{T5} from one of them. Second, we will define the corresponding higher-order unbalanced difference body and show that phenomena analogous to those proven in the previous section persist in the context of Conjecture \ref{KK2}. Namely, we will conjecture the corresponding weighted Schneider--Rogers--Shephard type inequality, weaker than Conjecture \ref{KK2}, and we will show that it still implies the weighted inequalities.

Let us establish a notation that will be used throughout the entire section. Fix $p\in\NN$.  First, we will keep the notation $\calJ_p^n$, $W_{p,K}$, ${n\choose \bj}$, and ${np\choose n-\bj}$ from Section \ref{s:lowdim}. Further, we will consider the standard $p$-simplex in $\RR^{p+1}$ given by
$$\Theta_p:=\left\{(t_0, \dots, t_p)\in\mathbb{R}^{p+1}_+\mid t_0+\cdots+t_p=1\right\}$$
and for $\bt=(t_0,\dots,t_p)\in\Theta_p$ we will denote
$$
\hat\bt:=(t_1,\dots,t_p)\qquad\text{and}\qquad\hat\Theta_p:=\{\hat\bt\mid \bt\in\Theta_p\}.
$$
Observe that the map $\Theta_p\to\hat\Theta_p:\bt\mapsto\hat\bt$ is a bijection. When $\bj=(j_0,\dots,j_p)\in\calJ_p^n$, $k\in\NN_0$, and $\bk=(k_0,\dots,k_p)\in(\NN_0)^{p+1}$, we will also abbreviate
$$
\bt^{n-\bj}:=\prod_{l=0}^pt_l^{n-j_l},\qquad \bt^k:=\prod_{l=0}^pt_l^k,\qquad\text{and}\qquad   \bt^{\bk}:=\prod_{l=0}^pt_l^{k_l}.
$$
Let $e_1,\dots, e_p$ be the standard basis of $\RR^p$. For any $K\in\calK(\RR^n)$ and $\bt\in\Theta_p$ we define
\begin{align}
\label{eq:defGptK}
G_{p,\bt, K}:= (\{0\}\times-t_0\Delta_pK)\vee\bigvee_{l=1}^p (\{e_{l}\}\times t_l\iota_lK)\in\calK(\RR^p\times\RR^{pn}).
\end{align}
Equivalently,
$$
G_{p,\bt, K}=\left\{ \big(\hat\bs, -s_0t_0\Delta_p(x_0)+s_1t_1\iota_1(x_1)+\dots+s_pt_p\iota_p(x_p)\big)\mid x_0,\dots,x_p\in K,\bs\in\Theta_p\right\}.
$$
We will also repeatedly use the following formula which is a direct generalization of \eqref{eq:intcombinat1}:
\begin{align}
\int_{\hat\Theta_p}\bs^{\bk} \d\hat\bs=\frac{k_0!\cdots k_p!}{(k_0+\cdots+k_p+p)!},\quad \bk\in(\NN_0)^{p+1}.
\end{align}

\subsection{Weighted inequalities II}

\begin{lemma}
Let $p\in\NN$. For any $K\in\calK(\RR^n)$ and $\bt\in\Theta_p$ we have
\begin{align}
\label{eq:GptK}
 |G_{p, \bt, K}|=\frac{(np)!}{(np+p)!}\sum_{\bm{j}\in\mathcal{J}^n_p}\bt^{n-\bj}W_{p, K}(\bm{j}).
\end{align}
\end{lemma}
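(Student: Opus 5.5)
This is the same Fubini-type computation as in Lemma \ref{lem:CptK}, now over a $p$-dimensional simplex of parameters rather than the interval $[0,1]$. The first step is to identify the fibres of $G_{p,\bt,K}$ over the base $\hat\Theta_p$. I claim that for $\hat\bs\in\hat\Theta_p$, with $s_0=1-s_1-\dots-s_p$, the fibre is
$$
-s_0t_0\Delta_pK+s_1t_1\iota_1K+\dots+s_pt_p\iota_pK .
$$
This follows from the explicit parametrization of $G_{p,\bt,K}$ given right after \eqref{eq:defGptK}. Indeed, a point of a join of finitely many convex sets placed over the vertices $0,e_1,\dots,e_p$ of $\hat\Theta_p$ is a convex combination with weights $s_0,\dots,s_p$. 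Its first coordinate is $\hat\bs$, and its second coordinate ranges over the Minkowski combination above.

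Next, apply Fubini's theorem in $\RR^p\times\RR^{pn}$:
$$
|G_{p,\bt,K}|=\int_{\hat\Theta_p}\Big|-s_0t_0\Delta_pK+\sum_{l=1}^ps_lt_l\iota_lK\Big|\d\hat\bs .
$$
Expand the integrand by multilinearity of the mixed volume in $\RR^{pn}$, using the coefficients $\lambda_0=s_0t_0$ and $\lambda_l=s_lt_l$. The term in which $-\Delta_pK$ appears $n-j_0$ times and $\iota_lK$ appears $n-j_l$ times has coefficient ${np\choose n-\bj}\prod_l(s_lt_l)^{n-j_l}$, times $W_{p,K}(\bj)$. The exponents must sum to $np$, which is exactly the condition $\bj\in\calJ_p^n$, counting $j_l$ as the deficit from $n$. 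One point needs care: the multilinear expansion would also produce terms in which some body appears more than $n$ times, i.e.\ with $j_l<0$. Any such term has at least $n+1$ arguments drawn from a single $n$-dimensional body ($\Delta_pK$ or $\iota_lK$), so it vanishes by the criterion for positivity of mixed volumes. Hence the sum runs only over $\calJ_p^n$.

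Finally, integrate each monomial with the generalized beta formula, taking $k_l=n-j_l$ so that $\sum_l k_l=np$:
$$
\int_{\hat\Theta_p}\bs^{n-\bj}\d\hat\bs=\frac{\prod_l(n-j_l)!}{(np+p)!}.
$$
Multiplying by ${np\choose n-\bj}=(np)!/\prod_l(n-j_l)!$ gives exactly $(np)!/(np+p)!$, independent of $\bj$, and this yields \eqref{eq:GptK}.

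The only step needing real care is the fibre identification together with the normalization of the base measure. The parametrization given after \eqref{eq:defGptK} settles the former directly. For the latter, I take $\d\hat\bs$ to be Lebesgue measure on $\hat\Theta_p\subset\RR^p$, which matches both the volume in $\RR^p\times\RR^{pn}$ and the stated integral formula.
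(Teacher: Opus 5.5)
Your proposal is correct and follows the paper's own proof: Fubini over the fibres $-s_0t_0\Delta_pK+\sum_{l=1}^p s_lt_l\iota_lK$ of $G_{p,\bt,K}$ above $\hat\Theta_p$, then the multilinear expansion with coefficients ${np\choose n-\bj}\bt^{n-\bj}\bs^{n-\bj}$, then the Dirichlet integral formula. Your remark that terms using a body more than $n$ times vanish, since each body lies in an $n$-dimensional subspace, is a detail the paper leaves implicit.
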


\begin{proof}
A direct computation shows
        \begin{align*}
                 |G_{p, \bt, K}|&=\int_{\hat\Theta_p}\left|-s_0t_0\Delta_pK+\sum_{l=1}^ps_lt_l\iota_lK\right|\d\hat \bs\\
                 &=\sum_{\bm{j}\in\mathcal{J}^n_p}{np \choose n-\bj}\bt^{n-\bj}W_{p, K}(\bm{j})\int_{\hat\Theta_p}\bs^{n-\bj}\d \hat\bs\\
                 &=\frac{(np)!}{(np+p)!}\sum_{\bm{j}\in\mathcal{J}^n_p}\bt^{n-\bj}W_{p, K}(\bm{j}).
        \end{align*} 
\end{proof}

\begin{lemma}\label{L3a}
   Let $p\in\NN$. For any $K\in\calK(\RR^n)$ and $\bt\in\Theta_p$ we have
\begin{align}
   \label{eq:L3a}
|G_{p,\bt, K}|\leq\frac{(n!)^{p}}{(np+p)!}\sum_{\bm{j}\in\mathcal{J}^n_p} {n \choose \bj}\bt^{n-\bj}|K|^p.
   \end{align}
     \begin{proof}
     We may assume $\bm{t}\in\relint(\Theta_p)$, the general case follows by continuity. Set
     \begin{align*}
                 H_{p,\bt, K}:=\big\{\big(0, t_0x_0, -t_0\Delta_p(x_0)\big) \mid  x\in K\big\}\vee\bigvee_{l=1}^p \{\big(e_{l}, 0, t_l\iota_l(x_l)\big) : x_l\in K\}\in\calK(\RR^p\times\RR^n\times\RR^{np}).
         \end{align*}
       Equivalently,
       $$
       H_{p, \bt, K}=\left\{\big(\hat\bs, s_0t_0x_0, -s_0t_0\Delta_p(x_0)+s_1t_1\iota_1(x_1)+\cdots+ s_pt_p\iota_p(x_p)\big)\mid x_0,\dots,x_p\in K,\bs\in\Theta_p\right\}
$$
        and so we have
        \begin{align}
        \label{Eq70a}
|H_{p, \bt, K}|=|K|^{p+1}\bt^n\int_{\hat\Theta_p}\bs^n\d\hat\bs=|K|^{p+1}\frac{(n!)^{p+1}}{(np+n+p)!}\bt^n.
    \end{align}
Next, we consider the section of $H_{p, \bt, K}$ by the affine subspace
$$
E_{\bt}:=\{(\hat\theta_{\bm{t}}, v, 0) \mid v\in\mathbb{R}^n\},
$$
where $ \theta_{\bm{t}}:=\left(\sum_{l=0}^pt_l^{-1}\right)^{-1}\left(t_0^{-1}, \dots,t_p^{-1}\right)$, and the orthogonal projection onto $E_{\bt}^{\perp}$. We obtain
    \begin{align*}
        H_{p, \bt, K}\cap E_{\bt}=\left\{\left(\hat\theta_{\bt}, \left(\sum_{l=0}^pt_l^{-1}\right)^{-1}x, 0\right) \mid x\in K\right\}
    \end{align*}
    and
\begin{align*}
        P_{E_{\bt}^\perp}(H_{p,\bt, K})&=\{(\hat\bs, 0, y) : (\hat\bs, x, y)\in H_{p, \bt, K} \text{ for some }x\in\RR^n\}\\
        &=\{(\hat\bs, 0, y) : y\in -s_0t_0\Delta_pK+s_1t_1\iota_1K+\dots+s_pt_p\iota_pK\}\\
        &=G_{p,\bt, K}.
\end{align*}
It follows $|H_{p, \bt, K}\cap E_{\bt}|=\left(\sum_{l=0}^pt_l^{-1}\right)^{-n}|K|$ and $|P_{E_{\bm{t}}^{\perp}}(H_{p, \bm{t}, K})|=|G_{p, \bm{t}, K}|$. Applying Lemma \ref{L2} together with \eqref{Eq70a}, we obtain
\begin{align*}
        |G_{p, \bm{t}, K}|&\leq {np+n+p \choose n}\frac{|H_{p, \bm{t}, K}|}{|K|}\left(\sum_{l=0}^pt_l^{-1}\right)^{n}
    =\frac{(n!)^{p}}{(np+p)!}\sum_{\bm{j}\in\mathcal{J}^n_p}{n \choose \bj}\bt^{n-\bj} |K|^p,
        \end{align*}
which completes the proof.
     \end{proof}
\end{lemma}

\begin{thm}\label{T4a}
     Let $p\in\mathbb{N}$. For any $K\in\mathcal{K}(\mathbb{R}^n)$ and $\bm{t}\in\Theta_p$, one has
     \begin{align}\label{kokl}
                \sum_{\bm{j}\in\mathcal{J}^n_p}\bt^{n-\bj}W_{p, K}(\bm{j})\leq\frac{(n!)^p}{(np)!}\sum_{\bm{j}\in\mathcal{J}^n_p}{n \choose \bj}\bt^{n-\bj}|K|^p.
            \end{align}
        If $\dim K=n$ and equality holds for some $\bm{t}\in\relint(\Theta_p)$, then $K$ is a simplex.
        \end{thm}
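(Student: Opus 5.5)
The inequality \eqref{kokl} follows by combining the two lemmas just proven. Formula \eqref{eq:GptK} expresses $|G_{p,\bt,K}|$ as $\frac{(np)!}{(np+p)!}\sum_{\bj}\bt^{n-\bj}W_{p,K}(\bj)$. Lemma \ref{L3a} bounds the same quantity by $\frac{(n!)^p}{(np+p)!}\sum_{\bj}{n\choose\bj}\bt^{n-\bj}|K|^p$. Multiplying through by $\frac{(np+p)!}{(np)!}$ gives \eqref{kokl} for every $\bt\in\Theta_p$. For boundary points, Lemma \ref{L3a} was already extended by continuity, so nothing more is needed there.

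For the equality case, I will mimic the proof of Theorem \ref{T4}. Assume $\dim K=n$ and that equality holds at some $\bt\in\relint(\Theta_p)$. Then the application of Lemma \ref{L2} in the proof of Lemma \ref{L3a} is an equality. Hence all nonempty sections of $H_{p,\bt,K}$ by the translates $E_{\bt}+(0,0,\ba)$, $\ba\in(\RR^n)^p$, are homothetic. Here I need $H_{p,\bt,K}$ to be full-dimensional, which holds since all $t_l>0$ and $\dim K=n$.

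The main step is to compute these sections explicitly. A point of $H_{p,\bt,K}$ with $\hat\bs=\hat\theta_{\bt}$ has $s_l=c\,t_l^{-1}$ for all $l$, where $c=(\sum_l t_l^{-1})^{-1}$. Its middle coordinate is then $v=c\,x_0$, and its last coordinate is $-c\Delta_p(x_0)+c\sum_{l=1}^p\iota_l(x_l)=c\,(x_1-x_0,\dots,x_p-x_0)$. Requiring the last coordinate to equal $\ba$ means $x_l=x_0+a_l/c\in K$ for each $l$. So the section is, up to the fixed coordinates, the set
\[
\{c x_0 : x_0\in K,\ x_0+c^{-1}a_l\in K,\ l=1,\dots,p\}=cK\cap\bigcap_{l=1}^p(cK-a_l).
\]
This set is nonempty exactly when $\ba\in cD_pK$. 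Rescaling by $c^{-1}$, the bodies $K\cap\bigcap_{l=1}^p(K-w_l)$ for $\bw\in D_pK$ are mutually homothetic. In particular each is homothetic to $K$, since $\bw=0$ gives $K$ itself. Corollary \ref{cor:simplex}(a) then shows that $K$ is a simplex.

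The step I expect to be the main obstacle is this section computation. One must check that the choice of $\theta_{\bt}$ makes the coefficients of all $x_l$ equal, which is exactly what lets the section collapse to an intersection of translates of a single scaled copy of $K$. One must also justify applying the equality characterization of Lemma \ref{L2}, which requires full-dimensionality and only addresses sections in the given orthogonal direction.
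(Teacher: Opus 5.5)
Your proposal is correct and follows essentially the same route as the paper. You combine \eqref{eq:GptK} with Lemma \ref{L3a}, invoke the equality case of Lemma \ref{L2} for the sections of $H_{p,\bt,K}$ by the translates $E_{\bt}+(0,0,\ba)$, compute these sections as $cK\cap\bigcap_{l}(cK-a_l)$ with $c=(\sum_l t_l^{-1})^{-1}$, and conclude via Corollary \ref{cor:simplex}(a). Your explicit remarks fill in steps the paper leaves implicit: $\bw=0$ gives $K$ itself, so every section is homothetic to $K$ and not merely to the others, and $H_{p,\bt,K}$ is full-dimensional.
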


     \begin{proof}
      First, the inequality \eqref{kokl} follows at once by combining \eqref{eq:GptK} with \eqref{eq:L3a}.
     
   Second, assume that $\dim(K)=n$ and that equality holds in \eqref{kokl} for some $\bt\in\relint(\Theta_p)$. Then we have equality in \eqref{eq:L3a} and the proof of Lemma \ref{L3a} together with Lemma \ref{L2} in turn imply that all non-empty sections of the body $H_{p,\bt, K}$ by the affine subspaces of the form $E_{\bt}+(0, 0, \ba)$, where $\ba=(a_1, \dots, a_p)\in(\mathbb{R}^{n})^p$, are homothetic. Denote $\rho_{\bt}:=\sum_{l=0}^pt_l^{-1}$. Since
   \begin{align*}
                H_{p,\bm{t}, K}\cap\big(E_{\bm{t}}+(0, 0, \ba)\big)&=\left\{\left(\hat\theta_{\bm{t}}, \rho_{\bt}^{-1}x, \ba\right)\mid x\in K,\rho_{\bt}\ba \in-\Delta_p(x)+K^p\right\}\\&=\left\{(\hat \theta_{\bm{t}}, y, \ba) \mid y\in\rho_{\bt}^{-1}K\cap\bigcap_{l=1}^p\left(\rho_{\bt}^{-1}K-a_l\right)\right\}\\
                &=\{\theta_{\bm{t}}\}\times\left(\rho_{\bt}^{-1}K\cap\bigcap_{l=1}^p\left(\rho_{\bt}^{-1}K-a_l\right)\right)\times\{\ba\},
            \end{align*}
           it follow that the bodies 
            $$\rho_{\bt}^{-1}K\cap\bigcap_{l=1}^p\left(\rho_{\bt}^{-1}K-a_l\right),  \quad \ba\in \rho_{\bt}^{-1}(D_pK),$$
           are all mutually homothetic. By Corollary \ref{cor:simplex}, $K$ thus has to be a simplex.
    \end{proof}
    
    Observe that Conjecture \ref{KK2}, if true, clearly implies Theorem \ref{T4a}. It is also easy to see that equality holds in \eqref{kokl} for any $K\in\calK(\RR^n)$ if $\bt$ lies on the relative boundary of $\Theta_p$. Finally, integrating \eqref{kokl} with respect to $\bm{t}$ over the set $\Theta_p$ at once yields the following corollary, analogous to Corollary \ref{cor:average}:
    \begin{cor}
\label{cor:average_mixed}
    Let $p\in\mathbb{N}$. For any $K\in\mathcal{K}(\mathbb{R}^n)$, one has
 \begin{align*}
            \sum_{\bm{j}\in\mathcal{J}^n_p}{np \choose n-\bj}^{-1}W_{p, K}(\bm{j})\leq\frac{(n!)^p}{(np)!}\sum_{\bm{j}\in\mathcal{J}^n_p}{np \choose n-\bj}^{-1}{n \choose \bj}|K|^{p}.
            \end{align*}
    \end{cor}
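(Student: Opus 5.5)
The plan is to integrate the inequality \eqref{kokl} of Theorem \ref{T4a} over the simplex of parameters. This mirrors the way Corollary \ref{cor:average} follows from Theorem \ref{T4} by integrating over $t\in[0,1]$. Concretely, I parametrize $\Theta_p$ by $\hat\bt\in\hat\Theta_p$ and integrate both sides of \eqref{kokl} with respect to $\d\hat\bt$. Both sides are finite sums of monomials $\bt^{n-\bj}$ with nonnegative coefficients that do not depend on $\bt$. Hence the inequality is preserved under integration, and the integral can be taken term by term.

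The one computation needed is the integral of each monomial. For $\bj=(j_0,\dots,j_p)\in\calJ_p^n$, apply the generalized Beta formula stated before the lemma on $|G_{p,\bt,K}|$ with $\bk=n-\bj$, that is, $k_l=n-j_l$. Then $k_0+\cdots+k_p=(p+1)n-n=np$, and therefore
\begin{align*}
\int_{\hat\Theta_p}\bt^{n-\bj}\d\hat\bt=\frac{(n-j_0)!\cdots(n-j_p)!}{(np+p)!}=\frac{(np)!}{(np+p)!}{np\choose n-\bj}^{-1}.
\end{align*}

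Substituting this into both sides of the integrated inequality gives
\begin{align*}
\frac{(np)!}{(np+p)!}\sum_{\bj\in\calJ_p^n}{np\choose n-\bj}^{-1}W_{p,K}(\bj)\leq\frac{(n!)^p}{(np)!}\cdot\frac{(np)!}{(np+p)!}\sum_{\bj\in\calJ_p^n}{np\choose n-\bj}^{-1}{n\choose\bj}|K|^p.
\end{align*}
The positive common factor $\frac{(np)!}{(np+p)!}$ then cancels, which yields exactly the claimed inequality.

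There is no real obstacle here. The only points to double-check are the exponent sum $np$ and the resulting identification of the multinomial coefficient ${np\choose n-\bj}$. Note also that no equality characterization is asserted, so none needs to be transferred from Theorem \ref{T4a}.
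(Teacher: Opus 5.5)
Your proposal is correct and matches the paper's argument: the paper likewise obtains the corollary by integrating \eqref{kokl} over $\Theta_p$. Your evaluation of each monomial via the Dirichlet-type formula, giving the common factor $\frac{(np)!}{(np+p)!}{np\choose n-\bj}^{-1}$ on both sides, is exactly the computation the paper leaves implicit.
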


\subsection{Weighted inequalities III}

\label{ss:weightedIII}

Here we prove  a different weighted inequality for mixed volumes of $-\Delta_pK$ and $\iota_1K, \dots, \iota_pK$, analogous to Theorem \ref{T5}. Let us start by introducing the corresponding higher-order generalization of unbalanced difference body, since it will play a role in the proof. Let $p\in\NN$. For any $\bt=(t_0,\dots,t_p)\in\Theta_p$ and $K\in\calK(\RR^n)$ we define
$$
D_p^{\bt}K:=-t_0\Delta_pK+\sum_{l=1}^pt_l\iota_lK.
$$
Observe that, for $p=1$, $D_1^{(t_0,1-t_0)}K=D^{t_0}K$ agrees with the unbalanced difference body defined by Artstein-Avidan and Putterman \cite{ArtsteinPutterman25}. Moire details about the body $D_p^{\bt}K$ will be given and the corresponding Schneider--Rogers--Shephard type inequality will be conjectured in \S\ref{ss:diffII}.

\begin{lemma}
\label{lem:ftK}
Let $p\in\NN$, $K\in\calK(\RR^n)$ with $\dim K=n$, and let $\bt\in\relint(\Theta_p)$ be such that $t_0=\min\{t_0, \dots, t_p\}$. The function
$$f_{\bt, K}(\bx):=|t_0K|^{-1}\,\big|(t_0K)\cap(t_1K-x_1)\cap\cdots\cap (t_pK-x_p)\big|, \quad\bx=(x_1,\dots,x_p)\in(\RR^n)^p,$$ 
is continuous and satisfies the following properties:
\begin{enuma}
\item $\int_{\RR^{np}}f_{\bt,K}(\bx)\d\bx=|K|^p\prod_{l=1}^pt_l^n$.
\item $\supp f_{\bt, K}=D_p^{\bt}K$.
\item $f_{\bt, K}(\RR^{np})=[0,1]$. Moreover, $f_{\bt, K}(\bm{x})=0$ if and only if $\bx\notin\inter (D_p^{\bt} K)$, and $f_{\bm{t}, K}(\bm{x})=1$ if and only if $\bm{x}\in\sum_{l=1}^p(t_l-t_0)\iota_lK.$
\item For any $\by,\bz\in D_p^{\bt}K$ and $s\in[0,1]$, it holds that
$$
f_{\bm{t}, K}\big((1-s)\bm{y} + s\bm{z}\big)^{\frac{1}{n}} \geq (1-s)f_{\bm{t},K}(\bm{y})^{\frac{1}{n}}+sf_{\bm{t}, K}(\bm{z})^{\frac{1}{n}}.
$$ 
\end{enuma}
\end{lemma}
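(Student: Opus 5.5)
The plan is to work directly with the set-valued map
$$
C(\bx):=(t_0K)\cap(t_1K-x_1)\cap\cdots\cap(t_pK-x_p),\qquad \bx\in(\RR^n)^p,
$$
so that $f_{\bt,K}(\bx)=|t_0K|^{-1}|C(\bx)|$. I would prove the items in the order (a), (b, nonemptiness part), (d), (c), and finish with continuity.

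For (a) I use Fubini and the indicator identity $\mathbf 1_{C(\bx)}(y)=\mathbf 1_{t_0K}(y)\prod_{l}\mathbf 1_{t_lK}(y+x_l)$. Integrating first in $\bx$ and then in $y$ gives
$$
\int_{\RR^{np}}|C(\bx)|\d\bx=|t_0K|\prod_{l=1}^p|t_lK|.
$$
Dividing by $|t_0K|$ yields $|K|^p\prod_{l=1}^p t_l^n$.

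For the nonemptiness part of (b), note that $C(\bx)\neq\emptyset$ if and only if there is $y\in t_0K$ with $x_l\in t_lK-y$ for every $l$. This says exactly that $\bx\in-\Delta_p(t_0K)+\sum_l t_l\iota_lK=D_p^{\bt}K$. Since $D_p^{\bt}K$ is full-dimensional (it contains $\sum_l t_l\iota_l K$ up to translation), once (c) shows that the positivity set is $\inter(D_p^{\bt}K)$, taking closures gives $\supp f_{\bt,K}=D_p^{\bt}K$.

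For (d), each $t_lK-x_l$ is convex, so $C\big((1-s)\by+s\bz\big)\supseteq(1-s)C(\by)+sC(\bz)$. Both sets on the right are nonempty for $\by,\bz\in D_p^{\bt}K$. Theorem \ref{BM} then gives the $\frac1n$-concavity of $|C(\cdot)|$ on $D_p^{\bt}K$, and dividing by $|t_0K|^{1/n}$ gives (d).

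For (c), the bound $f\le1$ is clear from $C(\bx)\subset t_0K$. Since $C(\bx)$ is a closed convex subset of $t_0K$, we have $f(\bx)=1$ if and only if $C(\bx)=t_0K$, that is, $x_l+t_0K\subset t_lK$ for all $l$. Here the hypothesis $t_0=\min_l t_l$ is used. Comparing support functions, $x+t_0K\subset t_lK$ is equivalent to $\langle x,u\rangle\le(t_l-t_0)h_K(u)$ for all $u$, which holds if and only if $x\in(t_l-t_0)K$. This gives the set $\sum_l(t_l-t_0)\iota_lK$, which is nonempty.

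For the zero set, I argue in both directions.
\begin{enuma}
\item If $C(\bx)$ has nonempty interior, then small perturbations of $\bx$ keep $C$ nonempty, so $\bx\in\inter(D_p^{\bt}K)$.
\item Conversely, let $\bx\in\inter(D_p^{\bt}K)$ and pick $\bx'$ with $f(\bx')=1$. Write $\bx=(1-s)\bx''+s\bx'$ with $\bx''\in D_p^{\bt}K$ and $s>0$. Then (d) gives $f(\bx)^{1/n}\ge s>0$.
\end{enuma}
Thus $f(\bx)=0$ exactly off $\inter(D_p^{\bt}K)$. The range is $[0,1]$ by continuity and the connectedness of $\RR^{np}$.

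The main obstacle I expect is continuity, especially at $\partial D_p^{\bt}K$, where $C(\bx)$ degenerates.
\begin{enuma}
\item In the interior I would use the concavity from (d): a concave function, here $f^{1/n}$, is continuous on the open convex set $\inter(D_p^{\bt}K)$.
\item Outside $D_p^{\bt}K$, $f$ vanishes on an open set.
\item At a boundary point $\bx_0$, I use upper semicontinuity of the intersection map in the Hausdorff sense. If $\bx_k\to\bx_0$, then every limit of points of $C(\bx_k)$ lies in $C(\bx_0)$, so $\limsup|C(\bx_k)|\le|C(\bx_0)|$. The last step uses upper semicontinuity of volume for compact sets.
\end{enuma}
Since $|C(\bx_0)|=0$ at a boundary point, this forces $f(\bx_k)\to0$.
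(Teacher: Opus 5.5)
Your proposal is correct and follows the paper's proof. It uses Fubini for (a), the identification $D_p^{\bt}K=\{\bx : C(\bx)\neq\emptyset\}$ for (b), and the inclusion $C((1-s)\by+s\bz)\supseteq(1-s)C(\by)+sC(\bz)$ together with Brunn--Minkowski for (d). The paper calls continuity ``clear'' and the zero-set and one-set description in (c) ``immediate''. You supply the missing arguments instead: support functions using $t_0=\min_l t_l$, positivity on the interior via (d) and a point where $f=1$, and upper semicontinuity at $\partial D_p^{\bt}K$. All of these check out.
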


\begin{proof}
First, continuity is clear. Second, (a) follows easily from Fubini's theorem since
\begin{align*}
   \int_{\RR^{np}}f_{\bt,K}(\bx)\d\bx&= |t_0K|^{-1} \int_{\RR^{np}}\big|(t_0K)\cap(t_1K-x_1)\cap\cdots\cap (t_pK-x_p)\big|\d\bx\\
   &=  |t_0K|^{-1}  \int_{\mathbb{R}^{np}}\int_{\mathbb{R}^n}\chi_{t_0K}(x_0)\prod_{l=1}^p\chi_{t_lK}(x_0+x_l)\d x_0\d \bx\\
   &=  |t_0K|^{-1}  \int_{\mathbb{R}^{n}}\chi_{t_0K}(x_0)\left(\prod_{l=1}^p\int_{\mathbb{R}^n}\chi_{t_lK}(x_0+x_l)\d x_l\right)\d x_0\\
    &=  |t_0K|^{-1}\left(\prod_{l=1}^p t_l^n\right)|K|^p \int_{\mathbb{R}^n} \chi_{t_0K}(x_0)\d x_0\\
    & =|K|^p\prod_{l=1}^pt_l^n.
    \end{align*}
    Third, it is straightforward to verify, cf. proof of \cite[Proposition 3.1]{K25}, that
$$
D_p^{\bt}K=\left\{\bx\in(\RR^n)^p\mid t_0K\cap(t_1K-x_1)\cap\cdots\cap(t_pK-x_p)\neq\emptyset\right\}.
$$
This immediately implies (b) and the second part of (c). Since clearly $f_{\bt, K}(\RR^{np})\subset[0,1]$, the first part of (c) follows by continuity. Finally, to prove (d), notice first that we have
    \begin{align*}
           & (t_0K)\cap\bigcap_{l=1}^p\Big(t_lK-\big((1-s)y_l+sz_l\big)\Big)\\ 
           &\quad=\big((1-s)t_0K+st_0K\big)\cap\bigcap_{l=1}^p\big((1-s)(t_lK-y_l)+s(t_lK-z_l)\big)\\
           &\quad\supseteq (1-s)\big((t_0K)\cap\bigcap_{l=1}^p(t_lK-y_l)\big)+s\big((t_0K)\cap\bigcap_{l=1}^p(t_lK-z_l)\big).
        \end{align*}
    This and the Brunn--Minkowski inequality (Theorem \ref{BM}) readily imply the claim.
\end{proof}

\begin{thm}\label{T5a}
Let $p\in\mathbb{N}$. For any $K\in\mathcal{K}(\mathbb{R}^n)$ and $\bt\in\Theta_p$, one has
\begin{align}
\label{f000}
    \sum_{\bm{j}\in\mathcal{J}^n_p} q_{n, p, \bm{j}}(\bm{t})W_{p,  K}(\bm{j})\leq \bt^n|K|^p,
\end{align}
where, denoting $t_{\min}:=\min\{t_0, \dots, t_p\}$, we set
$$
q_{n, p, \bm{j}}(\bm{t}) :=n(t_{\min})^n{np \choose n-\bj}\int_{0}^1 (1-s)^{n-1} \prod_{l=0}^p\big(t_l-(1-s)t_{\min}\big)^{n-j_l}\d s.
$$
If $\dim(K)=n$ and equality in \eqref{f000} holds for some $\bm{t}\in \relint(\Theta_p)$, then $K$ is a simplex.
\end{thm}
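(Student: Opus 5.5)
This is the higher-order analogue of the Artstein-Avidan--Putterman argument for \cite[Theorem 7]{ArtsteinPutterman25}, with Lemma \ref{lem:ftK} as the main tool. By symmetry of $W_{p,K}$ and of the weights under permuting the coordinates of $\bt$ together with $\bj$, I assume $t_0=t_{\min}$. I also assume $\dim K=n$ and $\bt\in\relint(\Theta_p)$; the remaining cases follow by continuity. Put $f=f_{\bt,K}$.

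The plan is to compare the integral of $f$ with the integral of a cone-shaped lower bound.

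\textbf{Step 1: concavity lower bound.} The set $A:=\sum_{l=1}^p(t_l-t_0)\iota_lK$ is where $f=1$, and $D:=D_p^{\bt}K$ is the support of $f$. Every point of $D$ can be written as $(1-s)\by+s\bz$ with $\by\in A$ and $\bz\in D$. Indeed, $D=-t_0\Delta_pK+t_0K^p+A$ up to regrouping: $t_l\iota_lK=t_0\iota_lK+(t_l-t_0)\iota_lK$ and $\sum_l t_0\iota_lK=t_0K^p$. By Lemma \ref{lem:ftK}(d), $f^{1/n}$ is concave on $D$. Hence the superlevel set $\{f\ge (1-s)^n\}$ contains
\[
(1-s)A+sD=\sum_{l=1}^p\big(t_l-(1-s)t_0\big)\iota_lK-st_0\Delta_pK=: L_s .
\]
To check the equality, write $sD=-st_0\Delta_pK+\sum_l st_l\iota_lK$ and add $(1-s)(t_l-t_0)\iota_lK$.

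\textbf{Step 2: layer cake.} Next,
\[
\int f=\int_0^1|\{f\ge u\}|\d u\ge\int_0^1 n(1-s)^{n-1}|L_s|\d s ,
\]
using the substitution $u=(1-s)^n$. Expanding $|L_s|$ by multilinearity gives $\sum_{\bj}{np\choose n-\bj}(st_0)^{n-j_0}\prod_{l\ge1}(t_l-(1-s)t_0)^{n-j_l}W_{p,K}(\bj)$.

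Notice that $st_0=t_0-(1-s)t_0$. So the $l=0$ factor has the same form as the others, and the coefficient of $W_{p,K}(\bj)$ is exactly $q_{n,p,\bj}(\bt)/t_0^n$. Combining this with Lemma \ref{lem:ftK}(a), $\int f=|K|^p\prod_{l\ge1}t_l^n$, and multiplying by $t_0^n$, I get \eqref{f000} with right-hand side $\bt^n|K|^p$.

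\textbf{Step 3: equality.} If equality holds, then $\{f\ge(1-s)^n\}=L_s$ up to null sets for a.e.\ $s$. By continuity this means $f^{1/n}$ is exactly the "cone" function built from $A$ and $D$: it is affine along segments from $A$ to the boundary of $D$. This is the main obstacle. The goal is to convert it into homothety of the sections $(t_0K)\cap\bigcap_l(t_lK-x_l)$, so that Corollary \ref{cor:simplex} applies.

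The approach I would try first is to use the equality case of Brunn--Minkowski in the inclusion from the proof of Lemma \ref{lem:ftK}(d). Equality along a segment from $\by\in A$, whose section is $t_0K$ itself, to a point $\bz$ forces the section at $\bz$ to be homothetic to $t_0K$ whenever it is full-dimensional. The points $\bz$ reachable this way cover $\inter D$.

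Setting $x_l=(t_l-t_0)w+w_l$, the section at $\bx$ becomes a translate of $t_0K\cap\bigcap_l(t_0K-w_l')$. Homothety of all these sections to $K$ then gives condition (a) of Corollary \ref{cor:simplex} for $t_0K$ (and hence for $K$), on an open set. Boundary points are handled by a limiting argument. This yields that $K$ is a simplex.

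Finally, Theorem \ref{T5} should follow by specializing to $\bt=(t,\frac{1-t}p,\dots,\frac{1-t}p)$, or by an analogous computation. That specialization is not needed here.
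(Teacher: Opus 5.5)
\textbf{Inequality part.} Your Steps 1--2 are correct and are essentially the paper's argument. The paper writes the lower bound as $f_{\bt,K}(\bx)\ge\big(1-d_{K'_{\bt}}(\bx,K_{\bt})\big)^n$ with $K_{\bt}=A$ and $K'_{\bt}=t_0D_pK$. This is the same as your inclusion $\{f\ge(1-s)^n\}\supseteq A+st_0D_pK=L_s$, and the layer-cake computation is identical.

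\textbf{Equality part, first half.} This also follows the paper. Equality forces $f=(1-d)^n$ everywhere by continuity, and $1-d$ is affine along rays from $A$. Brunn--Minkowski equality then shows that every section $C_{\bx}:=(t_0K)\cap\bigcap_{l}(t_lK-x_l)$, for $\bx\in\inter(D_p^{\bt}K)$, is homothetic to $t_0K$.

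\textbf{The gap is in your last step.} You claim that, with $x_l=(t_l-t_0)w+w_l$, the section $C_{\bx}$ is a translate of $t_0K\cap\bigcap_l(t_0K-w_l')$. This is false whenever some $t_l>t_0$. The set $t_lK-x_l$ is a translate of the larger homothet $t_lK$, not of $t_0K$, and no choice of shifts changes its size. For example, take $K=B^2$, $p=1$ and $t_0<t_1$. A proper lens $t_0B^2\cap(t_1B^2-x)$ is bounded by arcs of radii $t_0$ and $t_1$. It is therefore never a translate of a lens $t_0B^2\cap(t_0B^2-w)$, whose arcs both have radius $t_0$. So you cannot reach condition (a) of Corollary~\ref{cor:simplex}, which is the $\lambda=1$ Rogers--Shephard characterization.

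\textbf{The missing idea.} The sections you actually control are intersections of $K$ with translates of a \emph{different-size} homothet. You need the Artstein-Avidan--Putterman extension to $\lambda=t_0/t_1\in(0,1]$, i.e.\ Corollary~\ref{cor:simplex}(b). The paper reaches it by switching off all constraints except one:
\begin{itemize}
\item Take $x\in\inter(t_1K-t_0K)\setminus(t_1-t_0)K$ and write $x=t_1y-t_0z$ with $y,z\in\inter K$.
\item Put $x_1=x$ and $x_l=(t_l-t_0)z$ for $l\ge2$.
\item Then $t_0K\subset t_lK-x_l$ for $l\ge2$, so $C_{\bx}=(t_0K)\cap(t_1K-x)$.
\item The point $\bx$ lies in $\inter(D_p^{\bt}K)\setminus K_{\bt}$, so by the previous step $C_{\bx}$ is homothetic to $K$.
\end{itemize}
The remaining case $x\in(t_1-t_0)K$ is trivial, since then the section is $t_0K$ itself. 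Hence $(t_0K)\cap(t_1K-x)$ is homothetic to $K$ for every $x\in\inter(t_1K-t_0K)$, and Corollary~\ref{cor:simplex}(b) with $s=t_0\le t=t_1$ shows that $K$ is a simplex.
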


\begin{proof}
It suffices to prove \eqref{f000} for $K$ with $\dim(K)=n$ and for $\bm{t}\in\relint(\Theta_p)$. Without loss of generality we may also assume that $t_{\min}=t_0$. Then one has the decomposition
\begin{align}
\label{eq:decompD}
D_p^{\bt}K=K_{\bt}+K'_{\bt},
\end{align}
where $K_{\bm{t}}:=\sum_{l=1}^p(t_l-t_0)\iota_lK$ and $K'_{\bm{t}}:= t_0D_pK$. For any $\bx\in D_p^{\bt}K$ we set
$$
s_{\bm{x}}:=\min\big\{s\in[0, 1] \mid \bm{x}=(1-s)\bm{y}+s\bm{z}\text{ for some } \bm{y}\in K_{\bm{t}} \text{ and } \bm{z}\in D_p^{\bt}K\big\}.
$$
It holds
\begin{equation}
\label{f12b}
    s_{\bm{x}}=d_{K'_{\bm{t}}}(\bm{x}, K_{\bm{t}}).
\end{equation}
Indeed, set $\rho\coloneqq d_{K'_{\bm{t}}}(\bm{x}, K_{\bm{t}})=\min\{r\geq0 : \bm{x}\in K_{\bm{t}}+rK'_{\bm{t}}\}$ and observe that \eqref{eq:decompD} implies that $\rho\in[0,1]$. Then $\bm{x}=\bm{y}+\rho\bm{z}$ for some $\bm{y}\in K_{\bm{t}}$ and $\bm{z}\in K'_{\bm{t}}$. Consequently,
$$
\bm{x}=(1-\rho)\bm{y}+r(\bm{y}+\bm{z})\in (1-\rho)K_{\bm{t}}+\rho(K_{\bm{t}}+K'_{\bm{t}}),
$$
which in turn implies $s_{\bm{x}}\leq d_{K'_{\bm{t}}}(\bm{x}, K_{\bm{t}}).$ Conversely, for some $\bm{y},\bz_1\in K_{\bm{t}}$ and $\bm{z}_2\in K'_{\bm{t}}$, one has
$$
\bm{x}=(1-s_{\bm{x}})\bm{y}+s_{\bm{x}}(\bz_1+\bz_2)=\big((1-s_{\bm{x}})\bm{y}+s_{\bm{x}}\bm{z}_1\big)+s_{\bm{x}}\bm{z}_2\in K_{\bm{t}}+s_{\bm{x}}K'_{\bm{t}},
$$
from which we deduce that $s_{\bm{x}}\geq d_{K'_{\bm{t}}}(\bm{x}, K_{\bm{t}})$.

Consider the function $f_{\bt,K}:\RR^{np}\to[0,1]$ from Lemma \ref{lem:ftK}. Items (c) and (d) of Lemma \ref{lem:ftK} imply that for any $\bx\in D_p^{\bt}K$ one has $
f_{\bt,K}(\bx)\geq(1-s_{\bx})^n$. Using \eqref{f12b} and Lemma \ref{lem:ftK}(b), this can be written as
\begin{equation}\label{f7b}
    f_{\bm{t}, K}(\bm{x})\geq \big(1-d_{K'_{\bm{t}}}(\bm{x}, K_{\bm{t}})\big)^n \chi_{D_p^{\bt}K}(\bm{x}).
\end{equation}
Consequently, using also Lemma \ref{lem:ftK}(a), Fubini's theorem, and \eqref{eq:decompD}, we can write
\begin{align*}
\bt^n|K|^p&=t_0^n\int_{\RR^{np}}f_{\bt,K}(\bx)\d\bx\\
&\geq t_0^n\int_{D_p^{\bt}K}\big(1-d_{K'_{\bm{t}}}(\bm{x}, K_{\bm{t}})\big)^n\d\bx\\
&=t_0^n\int_0^1\left|\big\{\bx\in D_p^{\bt}K \mid \big(1-d_{K'_{\bm{t}}}(\bm{x}, K_{\bm{t}})\big)^n\geq r\big\}\right|\d r\\
&=t_0^n\int_0^1\left|\big\{\bm{x}\in K_{\bm{t}}+K'_{\bm{t}} \mid d_{K'_{\bm{t}}}(\bm{x}, K_{\bm{t}})\leq 1-r^{\frac{1}{n}}\big\}\right|\d r\\
&=nt_0^n\int_0^1(1-s)^{n-1}\left|\big\{\bm{x}\in K_{\bm{t}}+K'_{\bm{t}} \mid d_{K'_{\bm{t}}}(\bm{x}, K_{\bm{t}})\leq s\big\}\right|\d s\\
&=nt_0^n\int_0^1(1-s)^{n-1}\left|K_{\bm{t}}+sK'_{\bm{t}}\right|\d s\\
&=nt_0^n\int_0^1(1-s)^{n-1}\left|-st_0\Delta_pK+\sum_{l=1}^p(t_l-t_0+st_0)\iota_lK\right|\d s\\
&=\sum_{\bm{j}\in\mathcal{J}^n_p}q_{n,  p,  \bm{j}}(\bm{t})W_{p, K}(\bm{j}).
\end{align*}
This proves \eqref{f000}.

Suppose that equality holds  in \eqref{f000} for $K\in\calK(\RR^n)$ with $\dim(K)=n$ for some $\bm{t}\in\relint(\Theta_p)$. By symmetry, we may again assume that $t_{\min}=t_0$. By continuity, we have
\begin{equation}\label{f14a}
    f_{\bm{t}, K}(\bm{x})=\big(1-d_{K'_{\bm{t}}}(\bm{x}, K_{\bm{t}})\big)^n=(1-s_{\bx})^n
\end{equation}
for all $\bx\in D_p^{\bt}K$.

Fix arbitrary $\bx \in \inter(D_p^{\bt}K)\setminus K_{\bt}$. Note that $K_{\bt}\subset\inter (D_p^{\bt}K)$ since $D_p^{\bt}K=K_{\bt}+K'_{\bt}$ and $0\in\inter (K'_{\bt})$. Then we have $s_{\bm{x}}\in (0, 1)$ and there exist $\bm{y}\in K_{\bm{t}}$ and $\bm{z}\in \partial K'_{\bm{t}}$ such that $\bm{x}=\bm{y}+s_{\bm{x}}\bm{z}$. For $r\in [s_{\bm{x}}, 1)$, set  $ \bm{x}_r:= \bm{y}+r\bm{z}$. We claim that
\begin{align}
\label{eq:d=r}
d_{K'_{\bm{t}}}(\bm{x}_r, K_{\bm{t}})=r
\end{align}
for all $r\in [s_{\bm{x}}, 1)$. Indeed, on the one hand, $\bm{x}_r\in K_{\bm{t}}+rK'_{\bm{t}}$ implies $d_{K'_{\bm{t}}}(\bm{x}_r, K_{\bm{t}})\leq r$. On the other hand, by \eqref{f12b}, $\bm{x}\in\partial (K_{\bm{t}}+s_{\bm{x}}K'_{\bm{t}})$ and hence there exists $\bm{w}\in\RR^{pn}\setminus\{0\}$ with
$$
\bm{x}\cdot\bm{w}=h_{K_{\bm{t}}+s_{\bm{x}}K'_{\bm{t}}}(\bm{w})= h_{K_{\bm{t}}}(\bm{w})+s_{\bm{x}}h_{K'_{\bm{t}}}(\bm{w}).
$$
Since
$$
\bm{x}\cdot\bm{w}=\bm{y}\cdot\bm{w}+s_{\bm{x}}\bm{z}\cdot\bm{w}\leq h_{K_{\bm{t}}}(\bm{w})+s_{\bm{x}}h_{K'_{\bm{t}}}(\bm{w}),
$$
we also have $h_{K_{\bm{t}}}(\bm{w})=\bm{y}\cdot\bm{w}$ and $h_{K'_{\bm{t}}}(\bm{w})=\bm{z}\cdot\bm{w}>0$. Suppose $d_{K'_{\bm{t}}}(\bm{x}_r, K_{\bm{t}})< r$. Then there are $r_0\in[0,r)$, $\bm{u}\in K_{\bm{t}}$, and $\bm{v}\in K'_{\bm{t}}$ such that $\bm{x}_r=\bm{u}+r_0\bm{v}$ and hence
$$
        \bm{x}_r\cdot\bm{w}=\bm{u}\cdot\bm{w}+r_0\bm{v}\cdot\bm{w}\leq h_{K_{\bm{t}}}(\bm{w})+r_0h_{K'_{\bm{t}}}(\bm{w})= \bm{y}\cdot\bm{w}+r_0\bm{z}\cdot\bm{w}<\bm{y}\cdot\bm{w}+r\bm{z}\cdot\bm{w}= \bm{x}_r\cdot\bm{w},
    $$
a contradiction.

Observe that $\bm{x}=(1-r^{-1}s_{\bm{x}})\bm{y}+r^{-1}s_{\bm{x}}\bm{x}_r$. Thus, using \eqref{f14a}, Lemma \ref{lem:ftK}, and \eqref{eq:d=r}, we have
$$
   1-s_{\bm{x}} =f_{\bm{t}, K}(\bm{x})^{\frac{1}{n}}\geq(1-r^{-1}s_{\bm{x}})f_{\bm{t}, K}(\bm{y})^{\frac{1}{n}}+r^{-1}s_{\bm{x}}f_{\bm{t},  K}(\bm{x}_r)^{\frac{1}{n}}=1-s_{\bm{x}}.
$$
This in particular yields $f_{\bm{t}, K}(\bm{x})^{\frac{1}{n}}=(1-r^{-1}s_{\bm{x}})f_{\bm{t}, K}(\bm{y})^{\frac{1}{n}}+r^{-1}s_{\bm{x}}f_{\bm{t},  K}(\bm{x}_r)^{\frac{1}{n}}$ for some $r\in(s_{\bx},1)$. Consequently, the proof of Lemma \ref{lem:ftK}(d) together with Theorem \ref{BM} implies that the convex bodies $(t_0K)\cap\bigcap_{l=1}^p(t_lK-y_l)$ and $(t_0K)\cap\bigcap_{l=1}^p(t_lK-x_l)$ are homothetic. Moreover, since $\bm{y}\in K_{\bm{t}}$, we have $(t_0K)\cap\bigcap_{l=1}^p(t_lK-y_l)=t_0K$ and we conclude that $K$ is homothetic to $(t_0K)\cap\bigcap_{l=1}^p(t_lK-x_l)$ for any $\bm{x}\in\inter (D_p^{\bt}K)\setminus K_{\bm{t}}$.

We claim that $(t_0K)\cap (t_1K-x)$ is homothetic to $K$ for any $x\in\inter(t_1K-t_0K)$. First, if $x\in(t_1-t_0)K\subset\inter(t_1K-t_0K)$, then $t_0K+x\subset t_0K+(t_1-t_0)K=t_1K$ and therefore $(t_0K)\cap (t_1K-x)=t_0 K$ is homothetic to $K$. Second, assume $x\in\inter(t_1K-t_0K)\setminus (t_1-t_0)K$. Choose $y, z\in \inter(K)$ such that $x=t_1y-t_0z$ and set
\begin{align*}
x_l:=\begin{cases}x,&\text{for }l=1,\\(t_l-t_0)z,&\text{for } l=2,\dots,p.\end{cases}
\end{align*}
Then $x_l+t_0z\in t_l\inter(K)$ for all $l=1,\dots,p$, and consequently $\bm{x}:= (x_1, \dots, x_p)\in\mathbb{R}^{np}$ satisfies
\begin{align*}
        \bm{x}=-t_0\Delta_p(z)+\big(\bm{x}+t_0\Delta_p(z)\big)\in\inter\big(-t_0\Delta_p(z)+\sum_{i=1}^pt_i\iota_iK\big)\subset\inter (D_p^{\bt}K).
    \end{align*}
    At the same time, $\bm{x}\notin K_{\bm{t}}$ because $x\notin (t_l-t_0)K$. Hence $(t_0K)\cap\bigcap_{l=1}^p(t_lK-x_l)$ is homothetic to $K$. Finally, since for $l\geq2$ we have
    $$
    t_0K+x_l=t_0K+(t_l-t_0)z\subset t_0K+(t_l-t_0)K= t_lK,
    $$
    or, equivalently, $t_0K\subset t_lK-x_l$, $K$ is in fact homothetic to $(t_0K)\cap(t_1K-x_1)$ for arbitrary $x\in\inter(t_1K-t_0K)$. Corollary \ref{cor:simplex} then implies that $K$ is a simplex. 
\end{proof}

We will now show that Theorem \ref{T5} follows as a consequence of Theorem \ref{T5a}.

\begin{proof}[Proof of Theorem \ref{T5}]
We may assume $t\in(0,1]$. Then it is straightforward to verify that \eqref{f00} follows from \eqref{f000} if we set $\bm{t}\coloneqq \frac{1}{p+t}(t,1, \dots, 1)\in\Theta_p$. If equality holds in \eqref{f00} for some $t\in(0,1)$, then we have equality in  \eqref{f000} for $\bm{t}= \frac{1}{p+t}(t,1, \dots, 1)\in\relint(\Theta_p)$, and hence $K$ is a simplex.
\end{proof}

\subsection{Higher-order unbalanced difference body II}

\label{ss:diffII}

Recall from \S\ref{ss:weightedIII} that we define for $p\in\NN$ and $\bt=(t_0,\dots,t_p)\in\Theta_p$ the higher-order unbalanced difference body of $K\in\calK(\RR^n)$ as
$$
D_p^{\bt}K:=-t_0\Delta_pK+\sum_{l=1}^pt_l\iota_lK.
$$
Motivated by \cite[Conjecture 2]{ArtsteinPutterman25}, we conjecture the corresponding unbalanced Schneider--Rogers--Shephard type inequality, analogous to Conjecture \ref{COn}:

\begin{con}\label{COn2}
   Let $p\in\mathbb{N}$. For any $K\in\mathcal{K}(\mathbb{R}^n)$ with $\dim(K)=n$ and any $\bt\in\Theta_p$, one has
     \begin{equation}\label{UUU}
        \frac{|D_p^{\bt}K|}{|K|^p}\leq \frac{|D_p^{\bt}S_n|}{|S_n|^p}
    \end{equation}
     where $S_n\in\calK(\RR^n)$ is an $n$-simplex. If equality holds for some $t\in\relint(\Theta_p)$, then $K$ is a simplex.
\end{con}

Observe that
$$
|D_p^{\bt}K|=\sum_{\bm{j}\in\mathcal{J}^n_p}{np \choose n-\bj}\bt^{n-\bj} W_{p, K}(\bm{j}).
$$
In particular, if $K=S_n$ is an $n$-simplex, we have
\begin{equation}\label{f0a}
    |D_p^{\bt}S_n|=\sum_{\bm{j}\in\mathcal{J}^n_p}{n \choose j_0}\cdots{n \choose j_p}\bt^{n-\bj}|S_n|^p.
\end{equation}
The inequality \eqref{UUU} can thus be equivalently stated as
$$
\sum_{\bm{j}\in\mathcal{J}^n_p}{np \choose n-\bj}\bt^{n-\bj} W_{p, K}(\bm{j})\leq\sum_{\bm{j}\in\mathcal{J}^n_p}{n \choose j_0}\cdots{n \choose j_p}\bt^{n-\bj}|K|^p,\quad \bt\in\Theta_p.
$$       
In this formulation it is obvious that Conjecture \ref{COn2} would follow from Conjecture \ref{KK2}. Moreover, by the same considerations as in the proof of Theorem \ref{T5}, it is easily seen that Conjecture \ref{COn2} implies Conjecture \ref{COn}. Let us show that Conjecture \ref{COn2}, if true, directly implies the weighted inequalities proven in Theorems \ref{T4a} and \ref{T5a}.

\begin{prop}
\label{pro:impliesT4a}
    Conjecture \ref{COn2} implies Theorem \ref{T4a}.
\end{prop}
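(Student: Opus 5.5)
We mimic the proof of Proposition \ref{pro:impliesT4}, replacing the one-parameter family $D_p^{\mu}K$ by the multi-parameter family $D_p^{\bm\mu}K$ from \S\ref{ss:diffII}. Fix $\bt\in\relint(\Theta_p)$ (the boundary case of \eqref{kokl} is an identity). By the formula for $|G_{p,\bt,K}|$ used in the proof of \eqref{eq:GptK},
$$
|G_{p,\bt,K}|=\int_{\hat\Theta_p}\Big|-s_0t_0\Delta_pK+\sum_{l=1}^ps_lt_l\iota_lK\Big|\d\hat\bs .
$$
For $\bs\in\Theta_p$ put $\alpha_{\bt}(\bs):=\sum_{l=0}^ps_lt_l>0$ and $\bm\mu_{\bt}(\bs):=\alpha_{\bt}(\bs)^{-1}(s_0t_0,\dots,s_pt_p)\in\Theta_p$. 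By homogeneity of volume in $\RR^{np}$ the integrand equals $\alpha_{\bt}(\bs)^{np}\,|D_p^{\bm\mu_{\bt}(\bs)}K|$.

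Assuming Conjecture \ref{COn2}, we bound the integrand by $\frac{|K|^p}{|S_n|^p}\alpha_{\bt}(\bs)^{np}|D_p^{\bm\mu_{\bt}(\bs)}S_n|$. We then expand the simplex volume via \eqref{f0a}. In each term the factor $\alpha_{\bt}(\bs)^{np}\,\bm\mu_{\bt}(\bs)^{n-\bj}$ equals $\bt^{n-\bj}\bs^{n-\bj}$, because $\sum_l(n-j_l)=np$. Integrating with the Dirichlet formula gives $\int_{\hat\Theta_p}\bs^{n-\bj}\d\hat\bs=\prod_l(n-j_l)!/(np+p)!$. Hence
$$
|G_{p,\bt,K}|\le |K|^p\sum_{\bj\in\calJ_p^n}\prod_{l=0}^p{n\choose j_l}\,\frac{\prod_l(n-j_l)!}{(np+p)!}\,\bt^{n-\bj}=\frac{(n!)^p}{(np+p)!}\sum_{\bj\in\calJ_p^n}{n\choose\bj}\bt^{n-\bj}|K|^p,
$$
where we used $\prod_l{n\choose j_l}(n-j_l)!=(n!)^{p+1}/\prod_l j_l!=(n!)^p{n\choose\bj}$. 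This is exactly \eqref{eq:L3a}, and combining it with \eqref{eq:GptK} yields \eqref{kokl}.

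For the equality case, take $\dim K=n$ and normalize $|K|=|S_n|$. If equality holds in \eqref{kokl} for some $\bt\in\relint(\Theta_p)$, then the integrated inequality is an equality. Both integrands are continuous in $\hat\bs$, so $|D_p^{\bm\mu_{\bt}(\bs)}K|=|D_p^{\bm\mu_{\bt}(\bs)}S_n|$ for every $\bs$. For $\bs\in\relint(\Theta_p)$ we have $\bm\mu_{\bt}(\bs)\in\relint(\Theta_p)$, so the equality clause of Conjecture \ref{COn2} forces $K$ to be a simplex.

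The only delicate point is checking that the homogeneity bookkeeping exactly reproduces the constant in \eqref{eq:L3a}; this reduces to the multinomial identity above. The rest is a direct transcription of the argument in Proposition \ref{pro:impliesT4}.
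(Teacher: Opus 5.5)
Your proposal is correct and follows essentially the same route as the paper. Like the paper, you rewrite $|G_{p,\bt,K}|$ as $\int_{\hat\Theta_p}\alpha_{\bt}(\bs)^{np}|D_p^{\mu_{\bt}(\bs)}K|\d\hat\bs$, bound the integrand by Conjecture \ref{COn2} and \eqref{f0a}, integrate with the Dirichlet formula to recover \eqref{eq:L3a}, and obtain the equality case from the conjecture's equality clause at some $\bs\in\relint(\Theta_p)$; your continuity argument for pointwise equality of the integrands is slightly more explicit than the paper's.
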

\begin{proof}
Fix $p\in\mathbb{N}$ and $\bt\in\relint(\Theta_p)$, and for $\bs\in\Theta_p$ set
$$
\alpha_{\bm{t}}(\bm{s}):= \sum_{l=0}^ps_lt_l\qquad\text{and}\qquad\mu_{\bm{t}}(\bm{s})\coloneqq\frac{1}{\alpha_{\bm{t}}(\bm{s})}(s_0t_0, \dots, s_pt_p).
$$
Then $\mu_{\bt}(\bs)\in\Theta_p$ and we can write
\begin{align}
\label{eq:volGptK}
        |G_{p,\bm{t}, K}|=\int_{\hat\Theta_p}\left|-s_0t_0\Delta_pK+\sum_{l=1}^ps_lt_l\iota_lK\right|\d \hat\bs=\int_{\hat\Theta_p}\alpha_{\bm{t}}(\bm{s})^{np}|D_p^{\mu_{\bm{t}}(\bm{s})}K|\d \hat\bs.
    \end{align}
    Using \eqref{eq:volGptK}, \eqref{f0a}, and assuming $K$ satisfies \eqref{UUU}, we have
    \begin{align*}
        |G_{p,\bm{t}, K}|&\leq \frac{|K|^p}{|S_n|^p}\int_{\hat\Theta_p}\alpha_{\bm{t}}(\bm{s})^{np}|D_p^{\mu_{\bm{t}}(\bm{s})}S_n|\d \hat\bs \\
        &=|K|^p\sum_{\bm{j}\in\mathcal{J}^n_p}{n \choose j_0}\cdots{n \choose j_p}\int_{\hat\Theta_p} \alpha_{\bm{t}}(\bm{s})^{np}\mu_{\bm{t}}(\bm{s})^{n-\bj}\d\hat\bs\\
        &=|K|^p\sum_{\bm{j}\in\mathcal{J}^n_p}{n \choose j_0}\cdots{n \choose j_p}\bt^{n-\bj}\int_{\hat\Theta_p} \bs^{n-\bj}\d\hat\bs\\
        &=\frac{(n!)^p}{(np+p)!}\sum_{\bm{j}\in\mathcal{J}^n_p}{n \choose \bj}\bt^{n-\bj}|K|^p.
\end{align*}
Together with \eqref{eq:GptK}, this immediately implies \eqref{kokl}. For $\bt$ on the relative boundary of $\Theta_p$, the inequality is trivial.

Let $S_n\in\calK(\RR^n)$ be an $n$-simplex and assume that $|K|=|S_n|>0$. If equality holds in \eqref{kokl} for some $t\in\relint(\Theta_p)$, then
$$
\left|D_p^{\mu_{\bt(\bs)}}K\right|=\left|D_p^{\mu_{\bt(\bs)}}S_n\right|
$$
for some $\bs\in\relint(\Theta_p)$ and Conjecture \ref{COn2}, if true, implies that $K$ is an $n$-simplex.
\end{proof}

\begin{prop}
\label{pro:impliesT5a}
    Conjecture \ref{COn2} implies Theorem \ref{T5a}.
\end{prop}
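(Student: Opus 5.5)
The plan mirrors Proposition \ref{pro:impliesT5}, with the integral representation extracted from the proof of Theorem \ref{T5a}. Fix $p\in\NN$ and $\bt\in\relint(\Theta_p)$; by symmetry of $W_{p,K}$ assume $t_{\min}=t_0$. The last lines of that proof show that
$$
\sum_{\bj\in\calJ_p^n}q_{n,p,\bj}(\bt)W_{p,K}(\bj)=nt_0^n\int_0^1(1-s)^{n-1}\left|-st_0\Delta_pK+\sum_{l=1}^p(t_l-t_0+st_0)\iota_lK\right|\d s ,
$$
and this step uses only multilinearity, not the inequality. For $s\in[0,1]$ set $\beta_{\bt}(s):=st_0+\sum_{l=1}^p(t_l-t_0+st_0)=1-(1-s)pt_0>0$, and let $\nu_{\bt}(s)\in\Theta_p$ be the normalized vector $\beta_{\bt}(s)^{-1}\big(st_0,t_1-t_0+st_0,\dots,t_p-t_0+st_0\big)$. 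The integrand then equals $\beta_{\bt}(s)^{np}|D_p^{\nu_{\bt}(s)}K|$.

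Assuming Conjecture \ref{COn2}, I bound $|D_p^{\nu_{\bt}(s)}K|\le |K|^p|S_n|^{-p}|D_p^{\nu_{\bt}(s)}S_n|$ for every $s$ and integrate. Everything is affine-invariant, so $|D_p^{\bm u}K|/|K|^p$ depends only on the mixed volumes $W_{p,K}$. Reversing the same computation for $K=S_n$ gives
$$
\sum_{\bj}q_{n,p,\bj}(\bt)W_{p,K}(\bj)\le \frac{|K|^p}{|S_n|^p}\sum_{\bj}q_{n,p,\bj}(\bt)W_{p,S_n}(\bj).
$$
Thus it suffices to show that the right-hand side equals $\bt^n|K|^p$, that is, $\sum_{\bj}q_{n,p,\bj}(\bt)W_{p,S_n}(\bj)=\bt^n|S_n|^p$. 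This plays the role of the identity $\sum_j{n\choose j}c_{n,p,j}(t)=1$ used for Proposition \ref{pro:impliesT5}.

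\textbf{Main obstacle: the simplex identity.} I would not expand binomials. Instead I would check that the inequality chain in the proof of Theorem \ref{T5a} is an equality for $K=S_n$. The only inequality there is \eqref{f7b}, namely $f_{\bt,K}(\bx)\ge(1-s_{\bx})^n$, so it suffices to verify equality in \eqref{f7b} for a simplex. Equality holds precisely when the intersection $(t_0S_n)\cap\bigcap_l(t_lS_n-x_l)$ is a homothetic copy of $t_0S_n$ with ratio exactly $1-d_{K'_{\bt}}(\bx,K_{\bt})$. I would prove this using the explicit description \eqref{eq:Sncap}, suitably adapted to the dilations $t_l$. With the facet functionals $A_j$, the intersection is $\{A_j\ge \max(0,\dots)\}$, so it is a simplex whose scaling factor is an affine-maximum expression. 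One then has to match this expression with the gauge distance. Because $t_0D_pS_n$ has the explicit $\rho$-description, this matching amounts to checking that both quantities are given by the same piecewise-linear formula $\sum_j m_j$.

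If this verification becomes messy, the fallback is to use Conjecture \ref{KK2}'s known equality for simplices, where $W_{p,S_n}(\bj)=\frac{(n!)^p}{(np)!}{n\choose\bj}|S_n|^p$ by \cite{K25}. With that, one can verify the identity $\sum_{\bj}{n\choose j_0}\cdots{n\choose j_p}\,\frac{q_{n,p,\bj}(\bt)}{{np\choose n-\bj}}=\bt^n$ directly. This reduces to showing that $nt_0^n\int_0^1(1-s)^{n-1}|D^{\nu}S_n|\beta^{np}\d s=\bt^n|S_n|^p$, which is again the equality case of \eqref{f7b}.

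For the equality statement, suppose equality holds in \eqref{f000} for some $\bt\in\relint(\Theta_p)$. By continuity of the integrand, equality must then hold in \eqref{UUU} at $\nu_{\bt}(s)$ for some $s\in(0,1)$. That point lies in $\relint(\Theta_p)$, so the equality clause of Conjecture \ref{COn2} forces $K$ to be a simplex.
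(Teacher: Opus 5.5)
Your proposal is correct, and its skeleton is the paper's. The paper also uses the integral representation from the last step of the proof of Theorem~\ref{T5a}, applies Conjecture~\ref{COn2} pointwise at $\nu_{\bt}(s)$, reduces to a normalizing identity for the simplex, and gets the equality case by continuity using $\nu_{\bt}(s)\in\relint(\Theta_p)$ for $s\in(0,1)$.

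The one genuine difference is how you obtain $\sum_{\bj}q_{n,p,\bj}(\bt)W_{p,S_n}(\bj)=\bt^n|S_n|^p$. By \eqref{f0a} this is exactly the paper's relation $\frac{(n!)^p}{(np)!}\sum_{\bj}\binom{n}{\bj}q_{n,p,\bj}(\bt)=\bt^n$. The paper asserts it by direct computation. You instead derive it from the fact that the single inequality \eqref{f7b} in the proof of Theorem~\ref{T5a} is tight for simplices.

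That argument goes through, but you should write the computation out rather than leave it as a sketch:
\begin{itemize}
\item Take $A_j=\alpha_j+\beta_j$ as in \S\ref{ss:simplices}, and set $M_j(\bx):=\max\{-t_0\beta_j,-\alpha_j(x_1)-t_1\beta_j,\dots,-\alpha_j(x_p)-t_p\beta_j\}$.
\item Then $(t_0S_n)\cap\bigcap_l(t_lS_n-x_l)=\{u:\alpha_j(u)\geq M_j(\bx)\text{ for all }j\}$, which is a translate of $\lambda S_n$ with $\lambda:=-\sum_jM_j(\bx)\in[0,t_0]$.
\item On the other side, $K_{\bt}+rK'_{\bt}=-rt_0\Delta_pS_n+\sum_l(t_l-t_0+rt_0)\iota_lS_n$. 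Its defining maxima are $M_j(\bx)+(1-r)t_0\beta_j$, because all entries shift by the same amount.
\item Using $\sum_j\beta_j=1$ and the nonemptiness criterion from the proof of \eqref{eq:Sa}, $\bx\in K_{\bt}+rK'_{\bt}$ if and only if $\sum_jM_j(\bx)+(1-r)t_0\leq0$.
\item Hence $1-d_{K'_{\bt}}(\bx,K_{\bt})=\lambda/t_0$, so $f_{\bt,S_n}=\big(1-d_{K'_{\bt}}(\cdot,K_{\bt})\big)^n$ on $D_p^{\bt}S_n$.
\end{itemize}

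What your route buys is a conceptual reason the weights $q_{n,p,\bj}$ are normalized correctly: the proof of Theorem~\ref{T5a} is sharp on simplices. It also avoids evaluating $\int_0^1(1-s)^{n-1}\sum_{\bj}\prod_l\binom{n}{j_l}\big(t_l-(1-s)t_0\big)^{n-j_l}\d s$ by hand. The paper's route is purely algebraic but leaves that computation to the reader.

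Three small points:
\begin{itemize}
\item $\beta_{\bt}(s)=1-(1-s)(p+1)t_0$, not $1-(1-s)pt_0$. It can vanish at $s=0$ when all $t_l$ are equal, which is harmless since it is a null set.
\item Your fallback paragraph is circular, because it reduces back to the same equality case; drop it.
\item For $\bt$ on the relative boundary, $t_{\min}=0$ and both sides vanish, so that case is trivial.
\end{itemize}
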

\begin{proof}
Fix $p\in\NN$ and $\bt\in\Theta_p$ and keep the notation from Theorem \ref{T5a}. Without loss of generality we may again assume $t_{\min}=t_0$. Then the very last step in the proof of \eqref{f000} shows
\begin{align*}
    \sum_{\bm{j}\in\mathcal{J}^n_p}q_{n,  p,  \bm{j}}(\bm{t})W_{p, K}(\bm{j})=nt_0^n\int\limits_{0}^1(1-s)^{n-1}\beta_{\bm{t}}(s)^{np}|D_p^{\nu_{\bm{t}}(s)}K|\d s,
\end{align*}
where for $s\in[0,1]$ we set
$$
\beta_{\bm{t}}(s):= \sum_{l=0}^p (t_l-t_0+st_0)\quad\text{and}\quad\nu_{\bm{t}}(s):=\frac{1}{\beta_{\bm{t}}(s)}(st_0,t_1-t_0+s t_0, \dots, t_p-t_0+st_0).
$$
The rest of the proof is completely analogous to the proof of Proposition \ref{pro:impliesT4a}, using the relation
$$\frac{(n!)^p}{(np)!}\sum_{\bj\in\calJ_p^n}{n \choose \bj} q_{n, p,\bj}(\bt)=\bt^n$$
which is readily verified by a direct computation.
\end{proof}

\section{Inequalities for the unbalanced join of  $-\Delta_pK$ and $K^p$}
\label{s:join1}

In this section, we will prove an upper bound on the volume of the unbalanced join of the bodies $-\Delta_pK$ and $K^p$ for $K\in\calK(\RR^n)$, thus proving Theorem \ref{thm:join}. To this end, for $p\in\NN$, $t\in[0,1]$, and $K\in\calK(\RR^n)$ we denote
$$
Q_{p, t, K}:=(1-t)K^p \vee -t\Delta_pK.
$$
Notice that for $E=\{0\}\times\RR^{np}$, we have $P_E(C_{p,t,K})= Q_{p,t,K}$, see \eqref{eq:defCptK}. The following reduction lemma will crucial:

\begin{lemma}
\label{lem:reduction}
    Let $p\in\mathbb{N}$ and $t\in[0,1]$. If $S_n\in\calK(\RR^n)$ is an $n$-simplex, then we have
    \begin{align*}
            |Q_{p,t, S_n}|=(1-t)^{n(p-1)}|S_n|^{p-1}|Q_{1, t, S_n}|.
    \end{align*}
    \end{lemma}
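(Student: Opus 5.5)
The plan is to compute $|Q_{p,t,S_n}|$ by Fubini's theorem after the same linear change of variables as in the proof of Proposition \ref{P7}, and to use the explicit description \eqref{eq:Sncap} of the intersections of translates of a simplex. The fibres will turn out to be translates of dilates of $Q_{1,t,S_n}$. The cases $t\in\{0,1\}$ are immediate: for $t=1$ both sides vanish if $p\ge2$, and for $t=0$ the claim is $|S_n^p|=|S_n|^{p-1}|S_n|$. So we assume $t\in(0,1)$ and $p\geq2$; write $S=S_n$.

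\textbf{Step 1: fibres of $Q_{p,t,S}$.} Since the join of two convex bodies is the union of segments between them, $Q_{p,t,S}=\bigcup_{\lambda\in[0,1]}\big((1-\lambda)(1-t)S^p-\lambda t\Delta_pS\big)$. Apply $F(x_1,\dots,x_p)=(x_1-x_p,\dots,x_{p-1}-x_p,x_p)$, which has $|\det F|=1$, and write $\bw=(w_1,\dots,w_{p-1})$ and $w_p:=0$. A point $(\bw,u)$ lies in $F(Q_{p,t,S})$ if and only if, for some $\lambda\in[0,1]$ and $z\in S$, we have $u+\lambda t z\in\bigcap_{i=1}^p\big((1-\lambda)(1-t)S-w_i\big)$. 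Hence the fibre over $\bw$ is
$$
\bigcup_{\lambda\in[0,1]}\Big((1-\lambda)(1-t)\,S_{\bw/((1-\lambda)(1-t))}-\lambda tS\Big),
$$
in the notation $K_{\bw}$ of Proposition \ref{P7}.

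\textbf{Step 2: using the simplex structure.} Put $c(\bw):=1-\rho(\bw)=\sum_j m_j(\bw)\ge0$, with $m_j$ and $\rho$ as in \eqref{eq:rhoy}; $c$ is positively $1$-homogeneous. By \eqref{eq:Sncap} and the scaling rules $\rho(s\bw)=1-s(1-\rho(\bw))$ and $y(s\bw)=sy(\bw)$, we get
$$
(1-\lambda)(1-t)S_{\bw/((1-\lambda)(1-t))}=y(\bw)+\big((1-\lambda)(1-t)-c(\bw)\big)S
$$
whenever the coefficient is nonnegative, and the set is empty otherwise. Set $a:=(1-t)-c(\bw)$; only $\bw$ with $a\ge0$ contribute. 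Reparametrize by $\lambda=\sigma a/(1-t)$ with $\sigma\in[0,1]$, so that $(1-\lambda)(1-t)-c=a(1-\sigma)$ and $\lambda t=\sigma a t/(1-t)$. The fibre then becomes
$$
y(\bw)+\frac{a}{1-t}\bigcup_{\sigma\in[0,1]}\big((1-\sigma)(1-t)S-\sigma tS\big)=y(\bw)+\Big(1-\tfrac{c(\bw)}{1-t}\Big)Q_{1,t,S}.
$$
Its volume is therefore $\big(1-c(\bw)/(1-t)\big)_+^n\,|Q_{1,t,S}|$.

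\textbf{Step 3: integration.} By Fubini's theorem,
$$
|Q_{p,t,S}|=|Q_{1,t,S}|\int_{\RR^{n(p-1)}}\big(1-c(\bw/(1-t))\big)_+^n\d\bw=(1-t)^{n(p-1)}|Q_{1,t,S}|\int_{\RR^{n(p-1)}}\big(1-c(\bw)\big)_+^n\d\bw,
$$
using the substitution $\bw\mapsto(1-t)\bw$ and the homogeneity of $c$. By \eqref{eq:Sncap}, $(1-c(\bw))_+^n|S|=|S_{\bw}|$. The case $j=0$ of the Fubini computation in the proof of Proposition \ref{P7} gives $\int|S_{\bw}|\d\bw=|S^p|=|S|^p$. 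So the last integral equals $|S|^{p-1}$, which is the claim.

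\textbf{Main obstacle.} The only delicate point is Step 2. One must check that the reparametrization is a bijection: $\sigma\in[0,1]$ corresponds exactly to the admissible $\lambda\in[0,1-c/(1-t)]$, and $\lambda>1-c/(1-t)$ gives empty sets. One must also check that the translation $y(\bw)$ is independent of $\lambda$, so that the union of the translated pieces is exactly a translate of a dilate of $Q_{1,t,S}$. Both facts come from the affine scaling rules for $\rho$ and $y$ recorded after \eqref{eq:Sncap}.
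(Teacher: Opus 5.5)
Your proof is essentially the paper's argument (your $\lambda$ is the paper's $1-r$): the same shear $F$, the same computation via \eqref{eq:Sncap} and the scaling rules for $\rho$ and $y$ showing that the fibre over $\bw$ has volume $\rho_+\big(\bw/(1-t)\big)^n|Q_{1,t,S_n}|$, and the same Fubini identity $\int\rho_+(\bw)^n\d\bw=|S_n|^{p-1}$. One small slip: for $t=0$ one has $Q_{p,0,S_n}=\conv(S_n^p\cup\{0\})$, which equals $S_n^p$ only if $0\in S_n$, so your separate treatment of that case is wrong in general; however, Steps 1--3 only use $1-t>0$ and therefore already cover $t=0$ (alternatively, argue by continuity in $t$).
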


    \begin{proof}
    Clearly we may assume that $p\geq 2$ and $t\in(0,1)$. Consider the linear map $F:\RR^{np}\to\RR^{np}$ given by
    $$
    F(x_1, \dots, x_p)\coloneqq (x_1-x_p, \dots, x_{p-1}-x_p, x_p).
    $$ 
Observe that $\det(F)=1$, $F(\Delta_pS_n)=\{0\}\times S_n$, and $$
F(S_n^p)=\left\{(\bw, z)\in(\mathbb{R}^n)^{p-1}\times\mathbb{R}^n : z\in S_n\cap\bigcap_{i=1}^{p-1}(S_n-w_i), \bw=(w_1, \dots, w_{p-1})\right\}.
$$
Consider further the functions $\rho(\bw)$ and $y(\bw)$ given by  \eqref{eq:rhoy}. According to \eqref{eq:Sncap}, the fibre
$$
F(S_n^p)_{\bw}:=\{z\in\mathbb{R}^n : (\bw, z)\in F(S_n^p)\}=S_n\cap\bigcap_{i=1}^{p-1}(S_n-w_i),\quad \bw\in(\mathbb{R}^n)^{p-1},
$$
is non-empty if and only if $\rho(\bw)\geq0$ in which case it is given by
\begin{align}
\label{eq:fibre}
F(S_n^p)_{\bw}=y(\bw)+\rho(\bw)S_n.
\end{align}
Recall also that for $s\geq 1-\rho(\bw)$ we have $y(s^{-1}\bw)=s^{-1}y(\bw)$ and $\rho(s^{-1}\bw)=1-s^{-1}\big(1-\rho(\bw)\big)$, while for $s< 1-\rho(\bw)$ we have $F(S_n^p)_{s^{-1}\bw}=\emptyset$.

Denote $\rho_+(\bw):=\max\{0, \rho(\bw)\}$. We claim that
\begin{align}\label{WEW}
            \int_{(\RR^n)^{p-1}}\rho_+(\bw)^n\d\bw=|S_n|^{p-1}.
        \end{align}
On the one hand, using \eqref{eq:fibre} and the fact that $\rho_+(\bw)=0$ if $\bw\notin D_{p-1}S_n$, we have
\begin{align*}
\int_{D_{p-1}S_n}|F(S_n^p)_{\bw}|\d\bw  =|S_n|\int_{D_{p-1}S_n}\rho_+(\bw)^n\d\bw= |S_n|\int_{(\RR^n)^{p-1}}\rho_+(\bw)^n\d\bw.
\end{align*}
On the other hand, by Fubini's theorem, the first integral equals
        \begin{align*}
        \int\limits_{D_{p-1}S_n}\int_{\mathbb{R}^n}\chi_{S_{n}}(z)\prod_{i=1}^{p-1}\chi_{S_{n}-w_i}(z)\d z\d\bw=\int_{\mathbb{R}^n}\chi_{S_{n}}(z)\left(\prod_{i=1}^{p-1}\int\limits_{\mathbb{R}^n}\chi_{S_{n}}(z+w_i)\d w_i\right)\d z=|S_n|^p
            \end{align*}
and \eqref{WEW} follows.

Now consider the fibres of  $F(Q_{p, t, S_n})$. We will show that for any $\bw\in(\mathbb{R}^n)^{p-1}$ with $\bw\neq0$, 
\begin{align}
\label{WWWWW}
            F(Q_{p, t, S_n})_{\bw}=\bigcup_{r\in(0, 1]}r(1-t)F(S_n^p)_{r^{-1}(1-t)^{-1}\bw}+(1-r)(-tS_n).
        \end{align} 
        If $(w, z)\in F(Q_{p, t, S_n})$, then there exist $r_0\in[0, 1]$, $(\bw_0, z_0)\in F(S_n^p)$, and $y_0\in S_n$ such that \begin{align*}
                (\bw, z)=r_0(1-t)(w_0, z_0)+(1-r_0)(0, -ty_0)=\big(r_0(1-t)w_0, r_0(1-t)z_0+(1-r_0)(-ty_0)\big).
            \end{align*}
Since $\bw\neq0$, we have $r_0\in (0, 1]$, $\bw_0=r_0^{-1}(1-t)^{-1}\bw$, and  $z_0\in F(S_n^p)_{\bw_0}=F(S_n^p)_{r_0^{-1}(1-t)^{-1}\bw}$. Consequently,
$$
z=r_0(1-t)z_0+(1-r_0)(-ty_0)\in r_0(1-t)F(S_n^p)_{r_0^{-1}(1-t)^{-1}\bw}+(1-r_0)(-tS_n).
$$
Conversely, for any $r\in(0, 1]$, $z\in F(S_n^p)_{r^{-1}(1-t)^{-1}\bw}$, and $y\in S_n$, we have 
\begin{align*}
                r(1-t)\big(r^{-1}(1-t)^{-1}\bw, z\big)+(1-r)(0, -ty)=\big(\bw, r(1-t)z-(1-r)ty\big),
            \end{align*}
            and hence $r(1-t)z+(1-r)(-ty)\in F(Q_{p, t, S_n})_{\bw}$. This completes the proof of \eqref{WWWWW}.
            
Let $0\neq\bw\in(\RR^n)^{p-1}$ and set $\bw_t:=(1-t)^{-1}\bw$. We next claim that
\begin{align}            
 \label{WEM}
          |F(Q_{p, t, S_n})_{\bw}|=\rho_+(\bw_t)^n|Q_{1, t, S_n}|.
             \end{align}           
      Indeed, using \eqref{eq:fibre}, \eqref{WWWWW}, and the fact that $F(S_n^p)_{r^{-1}\bw_t}=\emptyset$ for $r<1-\rho(\bw_t)$, we get
             \begin{align*}
                 F(Q_{p, t, S_n})_{\bw}&=\bigcup_{r\in(0,1]}r(1-t)F(S_n^p)_{r^{-1}\bw_t}+(1-r)(-tS_n)\\
                 &=\bigcup_{r\in(0, 1]}r(1-t)\big(y(r^{-1}\bw_t)+\rho(r^{-1}\bw_t)S_n\big) +(1-r)(-tS_n)\\
                 &=\bigcup_{r\in[1-\rho(\bw_t), 1]}(1-t)y(\bw_t)+\big(r-1+\rho(\bw_t)\big)(1-t)S_n +(1-r)(-tS_n).
             \end{align*} 
         If $\rho(\bw_t)=0$, we have $|F(Q_{p, t, S_n})_{\bw}|=0$ since $F(Q_{p, t, S_n})_{\bw}$ is a singleton. If $\rho(\bw_t)>0$, using the substitution $r^{\prime}\coloneqq \rho(\bw_t)^{-1}\big(r-1+\rho(\bw_t)\big)$ for $r\in[1-\rho(\bw_t),\, 1]$, we obtain
             \begin{align*}
                 F(Q_{p, t, S_n})_{\bw}=(1-t)y(\bw_t)+\rho(\bw_t) Q_{1, t, S_n}
             \end{align*}
             and \eqref{WEM} follows.
     
 Now, Fubini's theorem, \eqref{WEW}, and \eqref{WEM} yield 
 \begin{align*}
             |Q_{p, t, S_n}|&= |F(Q_{p, t, S_n})|\\
             &=\int_{(\mathbb{R}^n)^{p-1}}|F(Q_{p, t, S_n})_{\bw}|\d\bw \\
             &=(1-t)^{n(p-1)}|Q_{1, t, S_n}|\int_{(\mathbb{R}^n)^{p-1}}\rho_+(\bw)^n\d\bw\\
             &=(1-t)^{n(p-1)}|S_n|^{p-1}|Q_{1, t, S_n}|,
            \end{align*}
            as claimed.
    \end{proof}

\begin{lemma}
\label{LQ}
   Let $p\in\mathbb{N}$, $t\in[0,1]$, and let $S_n\in\calK(\RR^n)$ be an $n$-simplex.
   \begin{enuma}
   \item If $S_n$ is centered, then
        \begin{align*}
             |Q_{p,t, S_n}|={n \choose k} (1-t)^{np-k}t^k|S_n|^p,
        \end{align*}
\item If $0$ is a vertex of $S_n$, then
        \begin{align*}
             |Q_{p,t, S_n}|=(1-t)^{n(p-1)}|S_n|^p.
        \end{align*}
   \end{enuma}
\end{lemma}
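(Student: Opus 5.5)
Both parts will be reduced to the case $p=1$ via Lemma \ref{lem:reduction}, which gives $|Q_{p,t,S_n}|=(1-t)^{n(p-1)}|S_n|^{p-1}|Q_{1,t,S_n}|$. It therefore suffices to prove the two identities for $p=1$:
\begin{align*}
|Q_{1,t,S_n}|={n\choose k}(1-t)^{n-k}t^k|S_n| \quad\text{(centered case)},\qquad |Q_{1,t,S_n}|=|S_n| \quad\text{(vertex case)}.
\end{align*}
The statement of (a) leaves $k$ unspecified. I read it as $k\in\{0,\dots,n\}$ determined by $t\in\big[\tfrac{k}{n+1},\tfrac{k+1}{n+1}\big]$; this is consistent with $n=1$, where $k=0$ for $t\le\frac12$. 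The endpoints $t\in\{0,1\}$ are trivial.

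\emph{Vertex case (b).} Write $S_n=\conv\{0,v_1,\dots,v_n\}$ with $v_1,\dots,v_n$ linearly independent. Then
$$Q_{1,t,S_n}=\conv\{0,(1-t)v_i,-tv_i\}_{i=1}^n .$$
Let $T$ be the linear map with $Te_i=v_i$. The body above is the image under $T$ of the generalized cross-polytope $\conv\{(1-t)e_i,-te_i\}$. Split this cross-polytope along the coordinate orthants: in each orthant it is a simplex with vertex $0$ and edge lengths $(1-t)$ or $t$ along the axes. Summing over all $2^n$ sign patterns gives volume $\frac{1}{n!}\big((1-t)+t\big)^n=\frac1{n!}$. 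Multiplying by $|\det T|$ gives exactly $|S_n|$, as required.

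\emph{Centered case (a).} Now $Q_{1,t,S_n}=\conv\{(1-t)v_i,-tv_i\}_{i=0}^n$ with $\sum v_i=0$. By affine invariance I may assume $S_n$ is the standard centered simplex, for instance realized in the hyperplane $\{\sum x_i=1\}$ of $\RR^{n+1}$ with barycentric coordinates. The plan is to decompose $Q_{1,t,S_n}$ into cones from the origin over its facets and sum their volumes.

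The candidate facets are $F_{I}=\conv\big(\{(1-t)v_i\}_{i\in I}\cup\{-tv_j\}_{j\notin I}\big)$ for subsets $I\subset\{0,\dots,n\}$. Using $\sum_i v_i=0$ and the barycentric description, I would write down the hyperplane through such a point set. I would then check via a linear inequality in $t$ that:
\begin{enumerate}[label=(\roman*)]
\item these hyperplanes support $Q_{1,t,S_n}$ exactly when $|I|$ has the appropriate size, namely $n+1-k$ with $k$ as above (this is where the thresholds $\frac{k}{n+1}$ enter);
\item the cones over the $F_I$ with $|I|=n+1-k$ tile $Q_{1,t,S_n}$.
\end{enumerate}
Each such cone should be the image of a coordinate-type simplex, with volume proportional to $(1-t)^{n-k}t^k|S_n|$ after normalization. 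The number of admissible facets then supplies the combinatorial factor, which should produce ${n\choose k}$ after the normalization, presumably because one of the $n+1$ vertices plays a distinguished role in the cone volume computation.

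The main obstacle is this facet identification in the centered case, together with getting the constant exactly right. I would first verify the count on $n=1,2$ by hand. If the combinatorics proves messy, I would instead use a degree/continuity argument: $|Q_{1,t,S_n}|$ is piecewise polynomial in $t$, and on each interval I would match it with ${n\choose k}(1-t)^{n-k}t^k|S_n|$, using the already known formula from Artstein-Avidan, Einhorn, Florentin and Ostrover \cite{ArtsteinETAL15} for $p=1$ if it can be cited.
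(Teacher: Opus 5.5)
Your reduction to $p=1$ via Lemma~\ref{lem:reduction} is exactly the paper's proof. Your reading of the unspecified $k$ (i.e.\ $\frac{k}{n+1}\le t\le\frac{k+1}{n+1}$) is the right one; it is how the lemma is used at $t=\frac{j}{n+1}$ in Proposition~\ref{JKLA}. For $p=1$ the paper computes nothing and simply cites \cite[Lemma 2.1 and \S5.4]{ArtsteinETAL15} for both parts, so your fallback in (a) is literally its argument. Your generalized cross-polytope computation in (b) is a correct, self-contained substitute for the citation. The ``piecewise polynomial'' fallback adds nothing by itself, since without the formula on each piece there is nothing to match.

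The direct facet argument you sketch for (a), however, breaks at step (i). Your sets $F_I$ contain one vertex for each of the $n+1$ indices. If a functional $\xi$ satisfies $\xi\cdot y=1$ on all of them, summing $\xi\cdot v_i$ over $i$ and using $\sum_i v_i=0$ gives $\frac{|I|}{1-t}=\frac{n+1-|I|}{t}$, i.e.\ $|I|=(n+1)(1-t)$. This holds only at the breakpoints, so your $F_I$ with $|I|=n+1-k$ are facets only at $t=\frac{k}{n+1}$. Moreover, no facet can contain both $(1-t)v_i$ and $-tv_i$, since that forces $\xi\cdot v_i=\frac1{1-t}=-\frac1t$.

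Hence for $\frac{k}{n+1}<t<\frac{k+1}{n+1}$ every facet has exactly $n$ vertices and omits one index $m$. It has the form $\conv\big(\{(1-t)v_i\}_{i\in I}\cup\{-tv_j\}_{j\in J}\big)$ with $I\sqcup J=\{0,\dots,n\}\setminus\{m\}$. Define $\xi$ by $\xi\cdot v_i=\frac1{1-t}$ on $I$ and $\xi\cdot v_j=-\frac1t$ on $J$; then $\xi\cdot v_m=\frac{|J|}{t}-\frac{|I|}{1-t}$. The supporting conditions at the omitted index, $-\frac1t\le\xi\cdot v_m\le\frac1{1-t}$, are equivalent to $|J|\le(n+1)t\le|J|+1$, which forces $|J|=k$. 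This is where the thresholds enter; the remaining vertices satisfy $\xi\cdot y\le 1$ automatically.

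So there are $(n+1){n\choose k}$ facets. The cone over each has volume $(1-t)^{n-k}t^k\,|\conv\{0,v_i:i\neq m\}|=(1-t)^{n-k}t^k\frac{|S_n|}{n+1}$, since $0$ is the centroid. Summing gives ${n\choose k}(1-t)^{n-k}t^k|S_n|$. The ``distinguished vertex'' you anticipated is thus the omitted index, and the factor $n+1$ from choosing it cancels against $\frac{|S_n|}{n+1}$.
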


\begin{proof}
The case $p=1$ was proven in \cite[Lemma 2.1 and \S5.4]{ArtsteinETAL15}. The general case then follows at once from Lemma \ref{lem:reduction}.
\end{proof}

Now we can proceed to the proof of our main result.

    \begin{proof}[Proof of Theorem \ref{thm:join}]
Fix $t\in[0,1]$ and $K\in\calK(\RR^n)$ with $0\in K$. Consider the subspace $E:=\RR\times\{0\}\subset\RR\times\RR^{np}$. Since $0\in K$, we have $ E\cap C_{p,t,K}=[0,1]$. Using this and $P_{E^\perp} C_{p,t,k}=Q_{p,t,K}$, Lemmas \ref{L2} and \ref{L3} imply
        \begin{align*}
                |Q_{p,t, K}|\leq{np+1 \choose 1}|C_{p,t, K}|=(np+1)|C_{p,t, K}|\leq(1-t)^{n(p-1)}|K|^p,
            \end{align*}
        proving \eqref{eq:join}.

Assume that $\dim K=n$ and that equality holds in \eqref{eq:join} for some $t\in(0,1)$. Then we in particular have equality in \eqref{eq:L3}. Since \eqref{eq:L3} was the only inequality used in the proof of Theorem \ref{T4}, we in turn have equality in \eqref{Eq71} and hence Theorem \ref{T4} implies that $K$ is a simplex.
            
By  Lemma \ref{lem:reduction}, any $n$-simplex satisfying equality in \eqref{eq:join} satisfies equality in  \eqref{eq:join} for $p=1$. It was shown in \cite[\S5.4]{ArtsteinETAL15} that in such a case, the simplex must have a vertex at the origin. This finishes the proof.
 \end{proof}

\begin{proof}[Proof of Corollary \ref{CORE}]
We may assume $0\in K$. Let $t\in(0,1)$. Theorem \ref{thm:join} together with the monotonicity of mixed volume applied to $(1-t)K^p\subset Q_{p, t, K}$ and $-t\Delta_pK\subset Q_{p, t, K}$ implies
            \begin{align*}
                V_{np}(K^p[np-j], -\Delta_pK[j])\leq\frac{|Q_{p,t,K}|}{t^j(1-t)^{np-j}}\leq\frac{|K|^p}{t^j(1-t)^{n-j}}.
            \end{align*}
                    Choosing $t=\frac{j}{n}$ yields the result.
    \end{proof}

To conclude this section, we will show that the corresponding F\'ary--R\'edei type conjecture (Conjecture \ref{QQ}) implies the inequality part of Conjecture \ref{KK1}.

\begin{prop}\label{JKLA}
    Conjecture \ref{QQ} implies inequality \eqref{KOTR} in Conjecture \ref{KK1}.
\end{prop}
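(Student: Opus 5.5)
Fix $K$ with $\dim K=n$ and $0<j<n$; the cases $j\in\{0,n\}$ and $\dim K<n$ are trivial or known. Since the mixed volume $V(K^p[np-j],-\Delta_pK[j])$ is translation invariant in each argument, we may replace $K$ by $K-x$, where $x\in K$ is the point given by Conjecture \ref{QQ} for a parameter $t\in(0,1)$ to be chosen later. Then $0\in K$. We also take $S_n$ to be a centered simplex.

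Apply monotonicity of mixed volume, as in the proof of Corollary \ref{CORE}. Since $(1-t)K^p\subset Q_{p,t,K}$ and $-t\Delta_pK\subset Q_{p,t,K}$, we get
$$
t^j(1-t)^{np-j}V\big(K^p[np-j],-\Delta_pK[j]\big)\leq |Q_{p,t,K}|.
$$
Conjecture \ref{QQ} bounds $|Q_{p,t,K}|/|K|^p$ by $|Q_{p,t,S_n}|/|S_n|^p$. By Lemma \ref{LQ}(a), or directly by Lemma \ref{lem:reduction} combined with the $p=1$ formula of \cite{ArtsteinETAL15}, this ratio equals ${n\choose k}(1-t)^{np-k}t^k$ with $k$ depending on $t$. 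Recall the $p=1$ picture: for a centered simplex, $|(1-t)S_n\vee -tS_n|$ equals ${n\choose k}(1-t)^{n-k}t^k|S_n|$ on the interval $t\in\left[\frac{k}{n+1},\frac{k+1}{n+1}\right]$, where one particular mixed-volume term dominates.

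So the plan is to choose $t=\frac{j}{n}$, or more robustly any $t$ in the range where the $k=j$ formula applies. Then
$$
V\big(K^p[np-j],-\Delta_pK[j]\big)\leq \frac{{n\choose j}(1-t)^{np-j}t^j}{t^j(1-t)^{np-j}}|K|^p={n\choose j}|K|^p,
$$
which is exactly \eqref{KOTR}.

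The main obstacle is pinning down the exact statement of Lemma \ref{LQ}(a): which $k$ corresponds to which $t$. The index $k$ is not defined in the lemma as stated; presumably it is the integer with $t\in\left[\frac{k}{n+1},\frac{k+1}{n+1}\right]$. To resolve this, I would use Lemma \ref{lem:reduction} to reduce to the case $p=1$ and then quote \cite[Lemma 2.1]{ArtsteinETAL15}. There the volume of $(1-t)S_n\vee -tS_n$ for a centered simplex is piecewise given by a single binomial term, and for $t$ in the $k$-th interval equals ${n\choose k}(1-t)^{n-k}t^k|S_n|$. Multiplying by $(1-t)^{n(p-1)}|S_n|^{p-1}$ gives the claimed formula. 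Choosing any $t$ in the interval with $k=j$, which is nonempty, then closes the argument. The translation step needs no care, since both sides of \eqref{KOTR} are translation invariant.
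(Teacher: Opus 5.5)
Your proof is correct and follows the paper's argument: you translate $K$ by the point given by Conjecture \ref{QQ}, bound the mixed volume by $|Q_{p,t,K-x}|/\big(t^j(1-t)^{np-j}\big)$ via monotonicity, and evaluate $|Q_{p,t,S_n}|$ using Lemma \ref{LQ}(a) with $k=j$. The paper takes $t=\frac{j}{n+1}$, the left endpoint of the interval you identified (where the $k=j-1$ and $k=j$ formulas coincide); your $t=\frac{j}{n}$ also lies in $\left[\frac{j}{n+1},\frac{j+1}{n+1}\right]$, so it works equally well.
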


    \begin{proof}
       Let $p\in\NN$, $1\leq j\leq n$,
       and assume that $K\in\calK(\RR^n)$ with $\dim K=n$ satisfies Conjecture \ref{QQ}. Set $           t:= \frac{j}{n+1}$ and let $x\in K$ be such that
        $$ \frac{|Q_{p, t, K-x}|}{|K|^p} \leq \frac{|Q_{p, t, S_n}|}{|S_n|^p}.$$
 Then, using Lemma \ref{LQ} together with the monotonicity and translation invariance of mixed volumes, we obtain
        \begin{align*}
                V_{np}(K^p[np-j], -\Delta_pK[j])\leq \frac{|Q_{p, t, K-x}|}{(1-t)^{np-j}t^j}\leq\frac{|Q_{p, t, S_n}|}{(1-t)^{np-j}t^j|S_n|^p}|K|^p={n \choose j}|K|^p.
        \end{align*}
    \end{proof}

\section{Inequalities for the unbalanced join of  $-\Delta_pK$ and $\iota_1K, \dots, \iota_pK$}
\label{s:join2}

In this last section, we consider the unbalanced join of the bodies  $-\Delta_pK$ and $\iota_1K, \dots, \iota_pK$ for $K\in\calK(\RR^n)$ and generalize the results of the previous section to this setting, that is, to the context of Conjecture \ref{KK2}. First, we will prove a sharp upper bound for the volume of this join (Theorem \ref{TTTT}) which in particular implies the currently best known general upper bound on the mixed volume of the respective convex bodies (Corollary \ref{COREL}). Then we formulate the corresponding F\'ary--R\'edei type conjecture (Conjecture \ref{QQQ}) and prove that it implies the inequality part of Conjecture \ref{KK2}.

Recall that we keep the notation established above, in particular in Sections \ref{s:lowdim} and \ref{s:higher2}. Analogously to Section \ref{s:join1}, for any $p\in\NN$, $\bt\in\Theta_p$, and $K\in\calK(\RR^n)$ we set
$$
R_{p, \bt, K}:=-t_0\Delta_pK\vee\bigvee_{l=1}^pt_l\iota_lK.
$$
Then for $E:=\{0\}\times\mathbb{R}^{np}\subset\RR^p\times\RR^{np}$ we have $P_E(G_{p, \bm{t}, K})=R_{p, \bm{t}, K}$, see \eqref{eq:defGptK}. This observation will be used in the proof of the upper bound for $R_{p, \bt, K}$. Since no analog of Lemma \ref{lem:reduction} is known to us in this setting, the last part of the proof of the following theorem cannot be reduced to the case $p=1$ proven in \cite[\S5.4]{ArtsteinETAL15}; instead, we will use a different argument to show directly that if a simplex attains equality, it must have a vertex at the origin.

\begin{thm}\label{TTTT}
Let $p\in\mathbb{N}$. For any $K\in\mathcal{K}(\mathbb{R}^n)$ with $0\in K$ and any $\bt\in\Theta_p$, one has
\begin{align}
\label{eq:join2}
|R_{p,\bm{t}, K}|\leq\frac{(n!)^p}{(np)!}\sum_{\bm{j}\in\mathcal{J}^n_p}{n \choose \bj} \bt^{n-\bj}|K|^p.
\end{align}
If $\dim(K)=n$ and equality holds for some $\bt\in\relint(\Theta_p)$, then $K$ is a simplex with a vertex at the origin.
\end{thm}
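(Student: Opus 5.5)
The inequality follows the pattern of the proof of Theorem \ref{thm:join}, with $C_{p,t,K}$ replaced by $G_{p,\bt,K}$. Since $P_E(G_{p,\bt,K})=R_{p,\bt,K}$ for $E=\{0\}\times\RR^{np}$, we take $F:=\RR^p\times\{0\}\subset\RR^p\times\RR^{np}$, the complementary coordinate subspace, so that $P_{F^\perp}G_{p,\bt,K}=R_{p,\bt,K}$. Because $0\in K$, every point $(\hat\bs,0)$ with $\bs\in\Theta_p$ lies in $G_{p,\bt,K}$ (take $x_0=\dots=x_p=0$). Hence $G_{p,\bt,K}\cap F=\hat\Theta_p\times\{0\}$, whose $p$-dimensional volume is $1/p!$. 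Lemma \ref{L2}, applied with $j=p$ in dimension $np+p$, gives
$$
|R_{p,\bt,K}|\cdot\frac1{p!}\le{np+p\choose p}|G_{p,\bt,K}|.
$$
Combining this with Lemma \ref{L3a}, we get $|R_{p,\bt,K}|\le \frac{(np+p)!}{(np)!}|G_{p,\bt,K}|\le \frac{(n!)^p}{(np)!}\sum_{\bj}{n\choose\bj}\bt^{n-\bj}|K|^p$, which is \eqref{eq:join2}. Degenerate $K$ or $\bt$ on the boundary of $\Theta_p$ are handled by continuity, or trivially.

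For the equality case, let $\dim K=n$ and $\bt\in\relint(\Theta_p)$. Equality in \eqref{eq:join2} forces equality in \eqref{eq:L3a}, which was the only inequality used in Theorem \ref{T4a}, so equality holds in \eqref{kokl}. Theorem \ref{T4a} then shows that $K$ is a simplex.

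It remains to show that the vertex condition is forced, and here no reduction to $p=1$ is available, so we use the equality case of Lemma \ref{L2} for the pair $(G_{p,\bt,K},F)$. Equality implies that for each direction $a\in F^\perp=\{0\}\times\RR^{np}$, the set $G\cap(F+\RR_+a)$ is the convex hull of $G\cap F=\hat\Theta_p\times\{0\}$ and a single point. Take $K=S_n$ with $0\in S_n$, and suppose for contradiction that $0$ is not a vertex. Then $0$ lies in the relative interior of some face of dimension at least $1$, so there is a segment $[-u,u]\subset S_n$ with $u\ne0$. We choose a direction $a$ and look at the two-dimensional cross-section of $G$ spanned by the $\hat\bs$-coordinates along an edge of $\hat\Theta_p$ together with $a$.

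Concretely, at the vertex $\hat\bs=0$ (that is, $s_0=1$) the fibre is $-t_0\Delta_pS_n$, and at the vertex $e_l$ it is $t_l\iota_lS_n$. Take $a=\iota_1(u)$ and consider the edge joining $\hat\bs=0$ to $e_1$. The fibre of $G$ over a point of this edge is $-s_0t_0\Delta_pS_n+s_1t_1\iota_1S_n$. Its intersection with the ray $\RR_+a$ contains $s_1t_1\iota_1(\lambda u)$ for $\lambda\in[0,\lambda_{\max}]$, where $\lambda_{\max}>0$ is the largest $\lambda$ with $\lambda u\in S_n$. It may also contain longer pieces, coming from $-s_0t_0\Delta_p(x_0)$ with $x_0$ chosen so that the components outside the first slot vanish, i.e.\ $x_0=0$ when $p\ge2$. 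In the latter case the section along this edge is the triangle with vertices $(0,0)$, $(e_1,0)$ and $(e_1,t_1\lambda_{\max}u)$, which is compatible with equality. We therefore instead take directions $a$ that mix $-\Delta_p(u)$ with $\iota_1(u)$, or use $p=1$-type slots, and check that the ray section then has positive length at two distinct vertices of $\hat\Theta_p$ (both at $\hat\bs=0$ and at some $e_l$). Such a section is a cone over $\hat\Theta_p$ with two apex-type points, not a convex hull of $\hat\Theta_p\times\{0\}$ and one point, which contradicts the equality characterization.

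Finding the right direction $a$ and verifying that the section is genuinely not a cone is the main obstacle. If this fails, a fallback is to compute $|R_{p,\bt,S_n}|$ directly when $0$ lies in the relative interior of a face: use the fibre description \eqref{eq:Sncap} via the map $F$ of Proposition \ref{P7}, and show strict inequality by comparing with the vertex case of Lemma \ref{LQ}(b).
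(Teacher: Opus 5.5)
Your proof of \eqref{eq:join2} and of the fact that an extremal $K$ is a simplex is exactly the paper's argument. You take the section by $\RR^p\times\{0\}$, of volume $1/p!$, apply Lemma~\ref{L2} and Lemma~\ref{L3a}, pass equality to \eqref{kokl}, and invoke Theorem~\ref{T4a}.

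The gap is the vertex condition. You do not prove it, and the mechanism you propose cannot work when $p\geq2$. The fibres of $G_{p,\bt,S_n}$ over the vertices of $\hat\Theta_p$ are $-t_0\Delta_pS_n$ and $t_l\iota_lS_n$, $l=1,\dots,p$. These lie in the subspaces $\Delta_p(\RR^n),\iota_1(\RR^n),\dots,\iota_p(\RR^n)$, and for $p\geq2$ any two of these meet only in $0$: an element of $\iota_l(\RR^n)$ has a zero slot, and this forces an element of $\Delta_p(\RR^n)$ to vanish. Hence for every $a\neq0$, whatever mixture of $-\Delta_p(u)$ and $\iota_1(u)$ you take, the ray $\RR_+a$ meets at most one vertex fibre in more than the origin. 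So the configuration ``positive length over two distinct vertices'', which does settle the case $p=1$, never occurs for $p\geq2$. Your fallback does not help either. Lemma~\ref{LQ}(b) and the fibre computation behind it concern $Q_{p,t,S_n}=(1-t)S_n^p\vee-t\Delta_pS_n$, which is a different body, and no analogue of Lemma~\ref{lem:reduction} is available for $R_{p,\bt,S_n}$.

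What is missing is a test at interior points of $\hat\Theta_p$. This requires an explicit description of the sections $S_{\ba}=\{\hat\bs\in\hat\Theta_p \mid (\hat\bs,\ba)\in G_{p,\bt,S_n}\}$, which by the equality case of Lemma~\ref{L2} must all be homothetic to $\hat\Theta_p$. The paper writes $S_n=\{A_j\geq0\}$ with $A_j=\alpha_j+\beta_j$ and $\beta_j=A_j(0)\geq0$. It then shows that $\hat\bs\in S_{\ba}$ if and only if $s_0t_0S_n\cap\bigcap_{l}(s_lt_lS_n-a_l)\neq\emptyset$, which holds if and only if $\sum_j m_j(\ba,\bs)\leq0$, where $m_j(\ba,\bs)=\max\bigl\{-s_0t_0\beta_j,\ \max_{1\leq l\leq p}(-\alpha_j(a_l)-s_lt_l\beta_j)\bigr\}$.

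If $0$ is not a vertex, some $\beta_i\in(0,1)$. The paper takes $\ba^*=\iota_1(a_1)$ with $\alpha_i(a_1)=q$ and $\alpha_j(a_1)=-q/n$ for $j\neq i$. It also takes $\bs^*\in\relint(\Theta_p)$ with $s_0^*t_0=s_1^*t_1=q$ and $s_l^*t_l>q$ for $l\geq2$. Near $\hat\bs^*$, the boundary of $S_{\ba^*}$ is then the single hyperplane $s_0t_0\beta_i+s_1t_1(1-\beta_i)=q$. Its outer normal is not an outer facet normal of $\hat\Theta_p$, so $S_{\ba^*}$ is not homothetic to $\hat\Theta_p$. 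Note that this witness is again of the form $\iota_1(\cdot)$, as in your first attempt. The difference is that the contradiction appears in the interior of $\hat\Theta_p$, not along the edge from $0$ to $e_1$ or at the vertices.
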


\begin{proof}
    
Fix $\bt\in\Theta_p$ and $K\in\calK(\RR^n)$ with $0\in K$. Consider the subspace $E:=\RR^p\times\{0\}\subset\RR^p\times\RR^{np}$. Since $0\in K$, we have $ E\cap G_{p,\bt,K}=\hat\Theta_p$; in particular $|E\cap G_{p,\bt,K}|=\frac1{p!}$. Using this and $P_{E^\perp} G_{p,\bt,k}=R_{p,\bt,K}$, Lemmas \ref{L2} and \ref{L3a} imply
        \begin{align*}
                |R_{p,\bt, K}|\leq p!{np+p \choose p}|G_{p,\bt, K}|=\frac{(np+p)!}{(np)!}|G_{p,\bt, K}|\leq\frac{(n!)^p}{(np)!}\sum_{\bm{j}\in\mathcal{J}^n_p}{n \choose \bj} \bt^{n-\bj}|K|^p,
            \end{align*}
        proving \eqref{eq:join2}.

Assume that $\dim K=n$ and that equality holds in \eqref{eq:join2} for some $\bt\in\relint(\Theta_p)$. First, we in particular have equality in \eqref{eq:L3a}. Since \eqref{eq:L3a} was the only inequality used in the proof of Theorem \ref{T4a}, we in turn have equality in \eqref{kokl} and hence Theorem \ref{T4a} implies that $K$ is a simplex.

Second, equality in the application of Lemma~\ref{L2} implies that the bodies
$$
S_{\ba}:=\big\{\hat{s}\in\hat\Theta_p : (\hat\bs, \ba)\in G_{p,\bm{t},K}\big\},\quad\ba\in E^{\perp},
$$
are all homothetic to $G_{p,\bm{t},K}\cap E=\hat\Theta_p$. Let $K=\conv\{v_0, \dots, v_n\}$ and let $A_j(x)=\alpha_j(x)+\beta_j$  for $x\in\RR^n$ and $j=0,\dots,n$ be as in \S\ref{ss:simplices}, that is, $\sum_{j=0}^nA_j(x)=1$, $x\in\RR^n$, $A_j(v_i)=\delta_{ij}$, and  
\begin{align}
\label{eq:K=}
K=\{x\in\mathbb{R}^n: A_j(x)\geq 0,\, j=0, \dots, n\}.
\end{align}
Since $0\in K$, we have $\beta_j=A_j(0)\geq 0$ for $ j=0, \dots, n$. Moreover,  $\sum_{j=0}^n\beta_j=1$ and $\sum_{j=0}^n\alpha_j=0$. Observe also that $\alpha_1, \dots, \alpha_n$ are linearly independent.

We claim that
\begin{align}
\label{eq:Sa}
S_{\ba}=\left\{\hat\bs\in\hat\Theta_p\mid \sum_{j=0}^n m_j(\ba, \bm{s})\leq 0 \right\},
\end{align}
where
$$
m_j(\ba, \bm{s})\coloneqq\max\big\{-s_0t_0\beta_j, -\alpha_j(a_1)-s_1t_1\beta_j, \dots, -\alpha_j(a_p)-s_pt_p\beta_j\big\},\quad j=0,\dots,n.
$$
To prove this, observe first that $\hat\bs\in S_{\ba}$ if and only if $\ba=-s_0t_0\Delta_p(x)+\sum_{l=1}^p s_lt_l\iota_l(y_l)$ for some $x, y_1, \dots, y_p\in K$ or, equivalently, if and only if there exists
$$
u\in s_0t_0K\cap\bigcap_{l=1}^p( s_lt_lK-a_l).
$$
By \eqref{eq:K=}, this in turn is equivalent to the existence of $u\in\mathbb{R}^n$ satisfying 
\begin{align}
\label{eq:alphaju}
\alpha_j(u)\geq m_j(\ba, \bm{s}),\quad j=0, \dots, n.
\end{align}
One the one hand, because $\sum_{j=0}^n\alpha_j=0$, \eqref{eq:alphaju} implies $\sum_{j=0}^n m_j(\ba, \bm{s})\leq0$. 
Conversely, if $\sum_{j=0}^n m_j(\ba, \bm{s})\leq0$, then one can choose numbers
$r_0, \dots, r_n$ such that $r_j\geq m_j(\ba, \bm{s})$ for all $j$ and
$\sum_{j=0}^n r_j=0$. Since $\alpha_1, \dots, \alpha_n$ are linearly independent and $\sum_{j=0}^n \alpha_j=0$, there exists
$u\in\mathbb{R}^n$ such that $\alpha_j(u)=r_j\geq m_j(\ba, \bm{s})$ for every $j$. Hence $\bm{s}\in S_{\ba}$ and 
\eqref{eq:Sa} follows.

Assume that $0$ is not a vertex of $K$. Since $\sum_{j=0}^n\beta_j=1$, at least two of the numbers $\beta_0, \dots, \beta_n\geq0$ are strictly positive; in particular, there exists $i\in\{0,\dots,n\}$ with $\beta_i\in(0,1)$. Let $q,\varepsilon_2, \dots, \varepsilon_p>0$ be such that
$$q\sum_{l=0}^p t_l^{-1}<1\qquad\text{and}\qquad \sum_{l=2}^p\varepsilon_l=1-q\sum_{l=0}^p t_l^{-1}.$$
Consider $\bs^*\in\relint(\Theta_p)$ with
$$ s_0^*\coloneqq \frac{q}{t_0},\qquad s_1^*\coloneqq \frac{q}{t_1},\qquad\text{and}\qquad s_l^*\coloneqq \frac{q}{t_l}+\varepsilon_l,\quad l=2, \dots, p,
$$
and the numbers $x_j$, $j=0, \dots, n$, with $    x_i\coloneqq q$ and $x_j\coloneqq -\frac{q}{n}$, $j\neq i$. 
Then $x_0+\dots+x_n=0$ and therefore there exists $a_1\in\mathbb{R}^n$ such that $\alpha_j(a_1)=x_j$, $j=0, \dots, n$. Set
$$\ba^*\coloneqq(a_1, 0, \dots, 0)\in(\mathbb{R}^n)^p.$$
First, if $j=i$, we have
$$-\alpha_i(a_1)-s_1^*t_1\beta_i=-q-q\beta_i<-q\beta_i=-s_0^*t_0\beta_i,$$
and
$$-s_l^*t_l\beta_i=-(q+t_l\varepsilon_l)\beta_i<-q\beta_i=-s_0^*t_0\beta_i,\quad l=2,\dots,p.$$
Thus
$$m_i(\ba^*, \bm{s}^*)=-s_0^*t_0\beta_i.$$
Second, for $j\neq i$, we have
$$-\alpha_j(a_1)-s_1^*t_1\beta_j=\frac{q}{n}-q\beta_j>-q\beta_j=-s_0^*t_0\beta_j,$$
and
$$\big(-\alpha_j(a_1)-s_1^*t_1\beta_j\big)-\big(-s_l^*t_l\beta_j\big)=
\frac{q}{n}+t_l\varepsilon_l\beta_j>0,\quad l=2,\dots,p.$$
Hence, for $j\neq i$,
$$m_j(\ba^*, \bm{s}^*)=-\alpha_j(a_1)-s_1^*t_1\beta_j.$$
Since all of the above inequalities are strict, there exists a relatively open neighborhood $U\subset\relint(\Theta_p)$ of $\bm{s}^*$ such that for all $\bm{s}\in U$ we have
$$\sum_{j=0}^n m_j(\ba^*, \bm{s})=-s_0t_0\beta_i+\sum_{j\neq i}\big(-\alpha_j(a_1)-s_1t_1\beta_j\big).$$
Moreover, since $\sum_{j=0}^n m_j(\ba^*, \bm{s}^*)=0,$ we have $\hat\bs^*\in\partial S_{\ba^*}$. In other words, the relative boundary of $S_{\ba^*}$ is locally given by a single affine equation. Using \eqref{eq:Sa}, it is easily verified that  the corresponding normal vector is not a multiple of any outer facet normal of $\hat\Theta_p$. Therefore $S_{\ba^*}$ cannot be homothetic to $\hat\Theta_p$, contradicting the necessary consequence of the equality condition in Lemma~\ref{L2}, and we conclude that $0$ is a vertex of $K$.
    \end{proof}

\begin{cor}\label{COREL}
Let  $p\in\NN$ and $\bm{j}\in\mathcal{J}^n_p$ with $0<j_0, \dots, j_p<n$. For any  $K\in\mathcal{K}(\mathbb{R}^n)$ one has
$$   W_{p,K}(\bm{j})\leq \frac{(n!)^p}{(np)!}\frac{n^n}{j_0^{j_0}\cdots j_p^{j_p}}|K|^p.$$
\end{cor}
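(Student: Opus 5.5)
This will mirror the proof of Corollary \ref{CORE}, with Theorem \ref{TTTT} in place of Theorem \ref{thm:join}. By translation invariance of mixed volumes and of $|K|$, we may assume $0\in K$. Fix $\bt\in\relint(\Theta_p)$. Since $-t_0\Delta_pK\subset R_{p,\bt,K}$ and $t_l\iota_lK\subset R_{p,\bt,K}$ for $l=1,\dots,p$, monotonicity and multilinearity of mixed volumes (applied in $\RR^{np}$) give
$$
\bt^{n-\bj}W_{p,K}(\bj)=V\big(-t_0\Delta_pK[n-j_0],t_1\iota_1K[n-j_1],\dots,t_p\iota_pK[n-j_p]\big)\leq V(R_{p,\bt,K}[np])=|R_{p,\bt,K}|.
$$
Here we use $\sum_l(n-j_l)=np$.

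Combining this with \eqref{eq:join2} yields
$$
W_{p,K}(\bj)\leq \bt^{-(n-\bj)}\frac{(n!)^p}{(np)!}\sum_{\bk\in\calJ_p^n}{n\choose \bk}\bt^{n-\bk}|K|^p .
$$
It remains to bound the right-hand sum. The difficulty is that $\bt^{n-\bk}=\prod_l t_l^{n-k_l}$ is not a multinomial expansion as it stands. Writing $\bt^{n-\bk}=\bt^{n}\prod_l t_l^{-k_l}$, the sum becomes
$$
\bt^n\sum_{\bk}{n\choose\bk}\prod_l t_l^{-k_l}=\bt^n\Big(\sum_{l=0}^p t_l^{-1}\Big)^n
$$
by the multinomial theorem. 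Dividing by $\bt^{n-\bj}$ then leaves $\bt^{\bj}\big(\sum_l t_l^{-1}\big)^n$.

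So the bound becomes $W_{p,K}(\bj)\leq \frac{(n!)^p}{(np)!}\prod_l t_l^{j_l}\big(\sum_l t_l^{-1}\big)^n|K|^p$, and we minimize over $\bt$. Substitute $u_l:=t_l^{-1}/\sum_m t_m^{-1}\in\relint(\Theta_p)$, so that $t_l=1/(u_l\sum_m t_m^{-1})$. Since $\sum_l j_l=n$, we get $\prod_l t_l^{j_l}\big(\sum_m t_m^{-1}\big)^n=\prod_l u_l^{-j_l}$, and the bound reads $\prod_l u_l^{-j_l}$ with $u\in\relint(\Theta_p)$ arbitrary; every $u\in\relint(\Theta_p)$ is attained by taking $t_l\propto u_l^{-1}$. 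Choosing $u_l=j_l/n$, which is admissible because all $j_l>0$, gives $\prod_l (n/j_l)^{j_l}=n^n/\prod_l j_l^{j_l}$. This is the claimed constant.

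The only delicate points are the multinomial rewriting and the observation that $\bt\mapsto u$ is a bijection of $\relint(\Theta_p)$; everything else is routine. The hypothesis $j_l<n$ is not needed for the inequality itself. It only reflects that the case $j_l=n$ reduces trivially via Lemma \ref{lem:reduce0}.
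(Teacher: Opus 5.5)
Your proof is correct and follows the paper's argument: reduce to $0\in K$, apply monotonicity to the inclusions $-t_0\Delta_pK,\ t_l\iota_lK\subset R_{p,\bt,K}$, invoke Theorem \ref{TTTT}, use the multinomial identity to get the bound $\bt^{\bj}\big(\sum_l t_l^{-1}\big)^n$, and choose $t_l^{-1}/\sum_m t_m^{-1}=j_l/n$, which is exactly the paper's condition $j_lt_l\sum_m t_m^{-1}=n$. One small correction to your closing remark: the hypothesis $j_l<n$ is automatic once all $j_m>0$, since $p\geq1$ means at least two positive entries sum to $n$.
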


\begin{proof}
We may assume $0\in K$. Let $\bt\in\relint(\Theta_p)$. Theorem \ref{TTTT} together with the monotonicity of mixed volume applied to $-t_0\Delta_pK\subset R_{p,\bm{t}, K}$ and $t_l\iota_lK\subset R_{p, \bm{t}, K}$, $l=1,\dots,p$, implies
        \begin{align*}
                W_{p, K}(\bm{j})\leq (\bt^{n-\bj})^{-1}|R_{p, \bm{t}, K}|\leq \frac{(n!)^p}{(np)!}\bt^{\bj}\big(\sum_{l=0}^pt_l^{-1}\big)^{n}|K|^p,
            \end{align*}
 Choosing $\bt\in\relint(\Theta_p)$ subject to $j_it_i\sum_{l=0}^p t_l^{-1}=n$, $i=0,\dots,p$, then easily yields the result.
\end{proof}

To finish, let us propose a F\'ary--R\'edei type conjecture analogous to Conjecture \ref{QQ}. Observe that also the following conjecture subsumes \cite[Conjecture 1.2]{ArtsteinETAL15} for $p=1$.

\begin{con}\label{QQQ}
    Let $p\in\mathbb{N}$. For any $K\in\mathcal{K}(\mathbb{R}^n)$ with $\dim(K)=n$, and any $\bm{t}\in\Theta_p$, there exists $x\in K$ such that
    $$\frac{|R_{p,\bm{t}, K-x}|}{|K|^p}\leq\frac{|R_{p,\bm{t}, S_n}|}{|S_n|^p},$$
    where $S_n\in\mathcal{K}(\mathbb{R}^n)$ is a centered $n$-simplex. 
\end{con}

In order to prove that Conjecture \ref{QQQ} implies the inequality part of Conjecture \ref{KK2}, we will need the following lemma.

\begin{lemma}\label{intern}
Let $p\in\mathbb{N}$ and $\bm{j}\in\mathcal{J}^n_p$, and let $S_n\in\mathcal{K}(\mathbb{R}^n)$ be a
centered $n$-simplex. There exists $\bt\in\relint(\Theta_p)$ such that
\begin{align}
\label{eq:intern}
|R_{p,\bm{t},S_n}|\leq\frac{(n!)^p}{(np)!}{n \choose \bm{j}}\bm{t}^{n-\bm{j}}|S_n|^p.
\end{align}
\end{lemma}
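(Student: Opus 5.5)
The plan is to compute or bound $|R_{p,\bt,S_n}|$ explicitly for a well-chosen $\bt$. The natural candidate comes from the case $p=1$: there, Lemma \ref{LQ}(a) together with the proof of Proposition \ref{JKLA} shows that for a centered simplex the weight $t=\frac{j}{n+1}$ on $-S_n$ (which appears $j=n-j_0$ times) turns the join volume into exactly the single term ${n\choose j}(1-t)^{n-j}t^j|S_n|$. Matching indices with $t_0=t$ and $t_1=1-t$ gives $t_0=\frac{n-j_0}{n+1}=\frac{j_1}{n+1}$ and $t_1=\frac{j_0+1}{n+1}$. I will therefore look for $\bt$ whose entries are affine in $\bj$ in the same pattern. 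My first guess is $t_0\propto j_1+\cdots+j_p=n-j_0$ and $t_l\propto j_0+1$ adjusted so that $\sum_l t_l=1$. The precise normalization will be fixed by the computation below.

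\textbf{Step 1: linear description of $R_{p,\bt,S_n}$.} Write $S_n=\conv\{v_0,\dots,v_n\}$ with $\sum_i v_i=0$, and take $A_j=\alpha_j+\frac1{n+1}$ as in \S\ref{ss:simplices}; centering gives $\beta_j=\frac{1}{n+1}$. A point $\ba\in\RR^{np}$ lies in $R_{p,\bt,S_n}$ if and only if there is $\bs\in\Theta_p$ with $\ba=-s_0t_0\Delta_p(x_0)+\sum_{l\ge1}s_lt_l\iota_l(x_l)$ and $x_0,\dots,x_l\in S_n$. By the computation behind \eqref{eq:Sa}, this is the condition that some $\hat\bs\in\hat\Theta_p$ satisfies
\[
\sum_{j=0}^n\max\Big\{-\tfrac{s_0t_0}{n+1},\ -\alpha_j(a_1)-\tfrac{s_1t_1}{n+1},\dots,-\alpha_j(a_p)-\tfrac{s_pt_p}{n+1}\Big\}\le0 .
\]
Thus $R_{p,\bt,S_n}$ is the projection of an explicit polyhedron. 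Applying the unimodular change of variables $a_l\mapsto(\alpha_0(a_l),\dots,\alpha_n(a_l))$, restricted to the hyperplane $\sum_j=0$, reduces the volume to that of an explicitly defined polytope in $(p+1)\times(n+1)$ "barycentric" coordinates.

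\textbf{Step 2: decomposition and choice of $\bt$.} Next I will decompose this polytope according to which entry attains each maximum. A cell is indexed by a map $\sigma\colon\{0,\dots,n\}\to\{0,\dots,p\}$, and in a cell the defining inequality is linear. Summing over cells with $|\sigma^{-1}(l)|=j_l$ should produce exactly the multinomial structure ${n\choose \bj}\frac{(n!)^p}{(np)!}\bt^{n-\bj}|S_n|^p$; this is the same bookkeeping that gives \eqref{f0a} for $D^{\bt}_pS_n$. The choice of $\bt$ is then made so that only the cells of type $\bj$ survive and all other cell types are empty. This is the higher-order analogue of the fact that for $p=1$ and $t=\frac{j}{n+1}$ the join of $-tS_n$ and $(1-t)S_n$ is itself essentially a single "type" region. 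If exact emptiness fails, it suffices to show that the union of the nonempty cells is contained in a region of the required volume, since the lemma only asks for an inequality. As a sanity check, for $p=1$ the construction must reproduce Lemma \ref{LQ}(a).

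\textbf{Main obstacle.} The hard part is Step 2: finding $\bt\in\relint(\Theta_p)$ for which the cell decomposition collapses to type $\bj$, and verifying the volume of that cell. If the direct choice resists, I would first try reducing to $p=1$ through the shear map $F$ from the proof of Lemma \ref{lem:reduction}, combined with the fibre description \eqref{eq:Sncap}. That lemma shows that for simplices the fibres of joins over $\bw$ are homothetic copies of a lower-order join. Iterating it one factor at a time, with $t_l$ chosen via the $p=1$ rule at each stage, should give the bound with the multinomial coefficients assembling as in Lemma \ref{lem:reduce0}.
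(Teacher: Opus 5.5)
Your proposal has a genuine gap. All of the content is deferred to Step~2, which only asserts that some $\bt$ makes the cells of type other than $\bj$ vanish. Moreover, the one concrete $\bt$ you write down is wrong for $p\ge2$. Take $n=p=2$ and $\bj=(0,0,2)$, so your recipe gives $\bt=(\frac12,\frac14,\frac14)$. The right-hand side of \eqref{eq:intern} is $\frac16t_0^2t_1^2|S_2|^2$. This is exactly the volume of the free sum $-t_0\Delta_2S_2\vee t_1\iota_1S_2$, since the map $(x,y)\mapsto\Delta_2(x)+\iota_1(y)$ has determinant $\pm1$. Write $t_2\iota_2(v)=t_2\Delta_2(v)-t_2\iota_1(v)$ for a vertex $v$, and use that $-v$ has gauge $n=2$ with respect to the centered simplex. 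Then $t_2\iota_2(v)$ lies in this free sum if and only if $2t_2(t_0^{-1}+t_1^{-1})\le1$, whereas here the left side is $3$. So $R_{2,\bt,S_2}$ strictly contains a full-dimensional body whose volume already equals the right-hand side, and the inequality fails. The weights must depend on all $j_l$, and inversely. What matters are the quantities $r_l=t_l^{-1}/\sum_mt_m^{-1}$. The paper takes $t_l\propto\big((p+1)j_l+1\big)^{-1}$, so that $r_l=\big(j_l+\frac1{p+1}\big)/(n+1)$ lies strictly between $\frac{j_l}{n+1}$ and $\frac{j_l+1}{n+1}$, and $r_lt_l$ is independent of $l$. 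Your $p=1$ formula is the endpoint case $r_0=\frac{j_0+1}{n+1}$, $r_1=\frac{j_1}{n+1}$. Your general guess gives $(n+1)\,r=(\frac35,\frac65,\frac65)$ in the example above, far from $\bj=(0,0,2)$.

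Even with the right $\bt$, your cell bookkeeping does not produce the multinomial term. The fibres of $\sigma\colon\{0,\dots,n\}\to\{0,\dots,p\}$ have sizes summing to $n+1$, not $\sum_lj_l=n$, and nothing computes the volume of a cell. The missing idea is a facet count. Since $0\in\inter R_{p,\bt,S_n}$, write its volume as the sum of the cones from $0$ over its facets. For a facet $\{\by:\by\cdot\bx=1\}$ put $z_i^l=1-\bx\cdot\bu_i^l\ge0$, where $\bu_i^0=-t_0\Delta_p(v_i)$ and $\bu_i^l=t_l\iota_l(v_i)$.
\begin{enumerate}
\item Centering gives $\sum_ir_lz_i^l=j_l+\frac1{p+1}$.
\item Constancy of $r_lt_l$, together with $\sum_lr_l=1$, gives $\sum_lr_lz_i^l=1$.
\item Hence $r_lz_i^l\le1$, so at least $j_l+1$ of the $z_i^l$ in group $l$ are positive, and at most $n-j_l$ vertices of group $l$ lie on the facet.
\item A facet needs $np$ vertices and $\sum_l(n-j_l)=np$. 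So every facet is an $(np-1)$-simplex of type exactly $\bj$.
\item Normalize $S_n=\conv\{-(e_1+\dots+e_n),e_1,\dots,e_n\}$. Then each cone has volume $\frac{\bt^{n-\bj}}{(np)!}|\det W_{\calE_F}|$ with $W_{\calE_F}$ an integer matrix.
\item Use $|\det|\le|\det|^2$, enlarge the sum to all index sets of type $\bj$, and apply Cauchy--Binet. This gives $(n+1)^p{n\choose\bj}$, which is \eqref{eq:intern}.
\end{enumerate}
Your fallback through the shear map of Lemma \ref{lem:reduction} does not substitute for this. That reduction uses the fibre structure of the product $S_n^p$ inside $Q_{p,t,S_n}$, with all factors weighted by the same $1-t$. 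No analogue is known for the join of the $t_l\iota_lS_n$ with unequal weights.
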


\begin{proof}For the sake of brevity, we will denote $\calL:=\{0, \dots, p\}$ and $\calI:=\{0, \dots, n\}$. Without loss of generality, we may assume that $S_n=\conv\{v_0, \dots, v_n\},$ where \begin{align*}
    v_0\:= -(e_1+\dots+e_n)\qquad\text{and}\qquad v_i:= e_i, \quad i\in\mathcal{I}\setminus\{0\}.
\end{align*} 
Then, for any $\bt\in\Theta_p$, we have $R_{p,\bm{t}, S_n}=\conv\big\{\bu^l_{i} : (l, i)\in\mathcal{L}\times\mathcal{I}\big\}$, where for $i\in\mathcal{I}$ we set
\begin{align*}\bu^0_i\coloneqq -t_0\Delta_p(v_i) \qquad\text{and}\qquad \bu^l_i\coloneqq t_l\iota_l(v_i),\quad l\in\mathcal{L}\setminus\{0\}.\end{align*}
Observe that, in particular, $R_{p,\bt, S_n}$ is a polytope. Since $0\in\inter(S_n)$, we have $0\in\inter(R_{p,\bm{t},S_n}).$ Consequently, we can express the volume of $R_{p,\bm{t}, S_n}$ as follows:
\begin{align}
\label{eq:volR}
|R_{p,\bm{t}, S_n}|=\sum_{F\in\calF}\big|\conv(\{0\}\cup F)\big|,
\end{align}
where $\calF$ denotes the set of facets of $R_{p,\bm{t}, S_n}$.

Fix a facet $F\in\mathcal{F}$. We have $F=\{\bm{y}\in R_{p,\,\bm{t},\,S_n}: \bm{y}\cdot \bm{x}=1\}$ for some $\bm{x}\in R_{p,\bm{t}, S_n}^{\circ}$. For all $i\in\calI$ and $l\in\calL$ we have $\bx\cdot\bu_i^l\leq 1$. Let us denote for $i\in\calI$
$$
z_i^0:=1-\bx\cdot\bu_i^0=1+t_0\sum_{l=1}^px_l\cdot v_i\qquad\text{and}\qquad z_i^l:=1-\bx\cdot\bu_i^l=1-t_lx_l\cdot v_i,\quad l\in\calL\setminus\{0\}.
$$
Then $z_i^l\geq0$ for all $i\in\calI$ and $l\in\calL$, and $z_i^l=0$ if and only if $\bu_i^l\in F$.

Consider $\bt=(t_0,\dots, t_p)\in\relint(\Theta_p)$ given by
$$t_l:=\left(\sum_{m=0}^p\frac{(p+1)j_l+1}{(p+1)j_m+1}\right)^{-1}, \quad l\in \calL,$$
and denote also
$$r_l:=\frac{j_l+(p+1)^{-1}}{n+1}, \qquad l\in\mathcal{L}.$$
Observe that $r_l\in(0,1)$.  Using that $S_n$ is centered, that is $\sum_{i=0}^nv_i=0$, we get
\begin{align}
\label{eq:sum_i}
\sum_{i=0}^nr_lz_i^l=j_l+(p+1)^{-1},\quad l\in\calL.
\end{align}
It can be verified by a direct computation that for $i\in\calI$ one has $\sum_{l=0}^pr_lz_i^l=1$. In particular, $r_lz_i^l\in[0,1]$, and hence there are at least $j_l+1$ non-zero summands in \eqref{eq:sum_i} for every $l\in\calL$.
In other words, for every $l\in\calL$, there are at most $n-j_l$ indices $i\in\calL$ such that $z_i^l=0$ or, equivalently, $\bu_i^l\in F$. Denote $\calE_F:=\{(l,i)\in\calL\times\calI\mid z_i^l=0\}$. Then $F=\conv\{\bu_i^l\mid (l,i)\in\calE_F\}$. Since $\dim F=np-1$, we have
$$
np\leq \#\calE_F\leq \sum_{l=0}^p(n-j_l)=np.
$$
Consequently, $\#\big(\mathcal{E}_F\cap(\{l\}\times\mathcal{I})\big)= n-j_l$ for any $l\in\calL$, and $F$ is an $(np-1)$-simplex. Thus,
\begin{align}
\label{eq:vol0F}
\big|\conv(\{0\}\cup F)\big|=\frac{1}{(np)!}\big|\det\big(\bu^l_i : (l, i)\in\mathcal{E}_F\big)\big|.
\end{align}
Set $\bw^0_i:=-\Delta_p(v_i)$ and $\bw^l_i:=\iota_l(v_i)$ so that $\bu^l_i=t_l\bw^l_i$ for $ (l, i)\in\calL\times\calI$. For any $\calE\subset\calL\times\calI$ with $\#\calE=np$, consider the matrix $W_{\calE}:=\big(\bw_i^l\mid (l,i)\in\calE\big)\in\ZZ^{(np)\times (np)}$. Observe that $|\det W_\calE|\geq1$; in particular, $|\det W_\calE|^2\geq |\det W_\calE|$. Further, set $$\widetilde\calF:=\Big\{\mathcal{E}\subset\mathcal{L}\times\mathcal{I}: \#\big(\mathcal{E}\cap(\{l\}\times\mathcal{I})\big)=n-j_l\text{ for all } l\in\calL\Big\}.$$
Altogether, using \eqref{eq:volR} and \eqref{eq:vol0F}, we can write
$$
|R_{p,\bt,S_n}|=\frac{\bt^{n-\bj}}{(np)!}\sum_{F\in\calF}\big|\det(W_{\calE_F})\big|\leq\frac{\bt^{n-\bj}}{(np)!}\sum_{\calE\in\widetilde\calF}\big|\det(W_{\calE})\big|^2=\frac{(n!)^p}{(np)!}{n \choose \bm{j}}\bt^{n-\bj}|S_n|^p,
$$
where in the last step we used $|S_n|=\frac{n+1}{n!}$, and the Cauchy--Binet formula to compute
$$
\sum_{\calE\in\widetilde\calF}|\det(W_{\calE})|^2=(n+1)^p{n \choose \bj}.
$$
This shows \eqref{eq:intern}.
\end{proof}

\begin{rem}
We note without proof that one in fact has equality in \eqref{eq:intern} for all $\bt\in\relint(\Theta_p)$ satisfying $j_l<(n+1)r_l(\bt)<j_l+1$ for $l=0, \dots, p$, where we denote
$$
r_l(\bm{t}):=\left(\sum_{m=0}^pt_m^{-1}\right)^{-1}t_l^{-1}.
$$  
This generalizes \cite[Lemma 2.1]{ArtsteinETAL15}.
\end{rem}

\begin{prop}
\label{posledni}
    Conjecture \ref{QQQ} implies inequality \eqref{KOTRB} in Conjecture \ref{KK2}.
\end{prop}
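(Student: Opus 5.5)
The argument will mirror the proof of Proposition \ref{JKLA}, with Lemma \ref{intern} playing the role of Lemma \ref{LQ}. Fix $p\in\NN$ and $\bj\in\calJ_p^n$, and let $K\in\calK(\RR^n)$.

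First, some reductions. If $\dim K<n$, then $W_{p,K}(\bj)=0$: since $K$ lies in a translate of a proper subspace $H$, every body entering $W_{p,K}$ lies, up to translation, in $H^p$. So we may assume $\dim K=n$. By Lemma \ref{lem:reduce0} and the symmetry of $W_{p,K}$, we may also assume that all $j_l\geq1$. Strictly this reduction is not even needed, since Lemma \ref{intern} is stated for arbitrary $\bj$. What matters is that its $\bt$ lies in $\relint(\Theta_p)$, so that $\bt^{n-\bj}>0$.

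Next, let $S_n$ be a centered $n$-simplex and take $\bt\in\relint(\Theta_p)$ as provided by Lemma \ref{intern}. Applying Conjecture \ref{QQQ} with this $\bt$, we obtain $x\in K$ such that
\[
|R_{p,\bt,K-x}|\leq \frac{|R_{p,\bt,S_n}|}{|S_n|^p}\,|K|^p .
\]

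Now bound $W_{p,K}(\bj)$ by this join. Since $-t_0\Delta_p(K-x)\subset R_{p,\bt,K-x}$ and $t_l\iota_l(K-x)\subset R_{p,\bt,K-x}$ for $l=1,\dots,p$, we can use the monotonicity of mixed volumes, their translation invariance (each of $-\Delta_p(K-x)$ and $\iota_l(K-x)$ is a translate of $-\Delta_pK$ and $\iota_lK$ respectively), and homogeneity in each argument. These give
\[
\bt^{n-\bj}\,W_{p,K}(\bj)=V\big(-t_0\Delta_p(K-x)[n-j_0],t_1\iota_1(K-x)[n-j_1],\dots\big)\leq V(R_{p,\bt,K-x}[np])=|R_{p,\bt,K-x}|.
\]

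Finally, combining this with the previous display and with \eqref{eq:intern}, we get
\[
\bt^{n-\bj}W_{p,K}(\bj)\leq\frac{(n!)^p}{(np)!}{n\choose\bj}\bt^{n-\bj}|K|^p .
\]
Dividing by $\bt^{n-\bj}>0$ yields \eqref{KOTRB}.

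The only real input is Lemma \ref{intern}, which is already established, so no serious obstacle remains. The point that needs care is that the $\bt$ in Lemma \ref{intern} is interior, which justifies the division, and that Conjecture \ref{QQQ} is invoked for exactly this $\bt$.
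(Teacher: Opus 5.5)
Your proposal is correct and follows the paper's proof essentially step for step. You take $\bt\in\relint(\Theta_p)$ from Lemma \ref{intern}, apply Conjecture \ref{QQQ} at that $\bt$, bound $\bt^{n-\bj}W_{p,K}(\bj)\leq|R_{p,\bt,K-x}|$ via translation invariance, homogeneity and monotonicity, and divide by $\bt^{n-\bj}>0$. Your separate treatment of the case $\dim K<n$, where $W_{p,K}(\bj)=0$, is a harmless addition that the paper leaves implicit.
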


    \begin{proof}
Let $p\in\NN$, $\bm{j}\in\mathcal{J}_p^n$, and assume that $K\in\calK(\RR^n)$ with $\dim K=n$ satisfies Conjecture \ref{QQQ}. Fix a centered $n$-simplex $S_n\in\calK(\RR^n)$. Choose $\bt\in\relint(\Theta_p)$ as in Lemma \ref{intern} and $x\in K$ such that
$$
\frac{|R_{p,\bm{t}, K-x}|}{|K|^p}\leq\frac{|R_{p,\bm{t}, S_n}|}{|S_n|^p}.
$$
Then Lemma \ref{intern} together with the translation invariance of mixed volumes and the homogeneity and monotonicity applied to inclusions
$$
-t_0\Delta_p(K-x)\subset R_{p, \bt, K-x}\qquad\text{and}\qquad t_l\iota_l(K-x)\subset R_{p, \bt,K-x},\quad l=1, \dots, p,
$$
yields
$$
               W_{p, K}(\bm{j})\leq \bm{t}^{\bm{j}-n}|R_{p, \bm{t}, K-x}|\leq\frac{(n!)^p}{(np)!}{n \choose \bm{j}}|K|^p,
$$
which completes the proof.

    \end{proof}

\bibliographystyle{abbrv}
\bibliography{ref}

\end{document}